\definecolor{my-blue}{rgb}{0.0,0.0,0.6}
\definecolor{my-red}{rgb}{0.5,0.0,0.0}
\definecolor{my-green}{rgb}{0.0,0.5,0.0}
\newtheorem{theorem}{\sc Theorem}[section]
\newtheorem{lemma}[theorem]{\sc Lemma}
\newtheorem{proposition}[theorem]{\sc Proposition}
\numberwithin{equation}{section}
\theoremstyle{remark}
\newtheorem{remark}[theorem]{Remark}
\newcommand{\be}{\begin{equation}}
\newcommand{\ee}{\end{equation}}
\newcommand{\nn}{\nonumber}
\providecommand{\abs}[1]{\vert#1\vert}
\def\bE{\mathbb{E}}
\def\bN{\mathbb{N}}
\def\bP{\mathbb{P}}
\def\bR{\mathbb{R}}
\def\bZ{\mathbb{Z}}
\def\om{\omega}
\def\ind{\mathbf{1}}
\def\m1{\mathbf{1}}
\DeclareMathOperator{\Var}{Var}   \DeclareMathOperator{\Cov}{Cov}  
 \def\Vvv{{\rm\mathbb{V}ar}}  \def\Cvv{{\rm\mathbb{C}ov}}
\def\E{\bE}
\def\P{\bP} %% environment measure 
\def\shift{S}  %spatial shift  
\def\north{{\mathcal N}}  \def\south{{\mathcal S}}  \def\east{{\mathcal E}}  
\def\west{{\mathcal W}}
\def\digamf{\Psi_0} %digamma function 
\def\trigamf{\Psi_1} %trigamma function 
\def\para{\theta} %parameter
\def\paraa{\lambda} %another parameter
\def\uu{Y} %a process 
\def\aaa{a}  \def\bbb{b}    %%parameters used in a lemma 
\newcommand{\fren}{{Z}}
\newcommand{\bea}{\begin{eqnarray}}
\newcommand{\eea}{\end{eqnarray}}
\newcommand{\ben}{\begin{eqnarray*}}
\newcommand{\een}{\end{eqnarray*}}
\newcommand{\eqd}{\stackrel{d}{=}}
\def\tZ{\tilde Z}
\title[Polymer in a Brownian environment]{Bounds for scaling exponents for a 1+1 dimensional directed polymer in a Brownian environment}
\date{\today}
\begin{document}
\author[T.~Sepp\"al\"ainen]{Timo Sepp\"al\"ainen}
\address{Timo Sepp\"al\"ainen\\ University of Wisconsin-Madison\\ 
Mathematics Department\\ Van Vleck Hall\\ 480 Lincoln Dr.\\  
Madison WI 53706-1388\\ USA.}
\email{seppalai@math.wisc.edu}
\urladdr{http://www.math.wisc.edu/~seppalai}
\thanks{T.~Sepp\"al\"ainen was partially supported by 
National Science Foundation grant DMS-0701091 and by the
Wisconsin Alumni Research Foundation.} 
\author[B.~Valk\'o]{Benedek Valk\'o}
\address{Benedek Valk\'o\\ University of Wisconsin-Madison\\ 
Mathematics Department\\ Van Vleck Hall\\ 480 Lincoln Dr.\\  
Madison WI 53706-1388\\ USA.}
\email{valko@math.wisc.edu}
\urladdr{http://www.math.wisc.edu/~valko}
\thanks{B.~Valk\'o  was partially supported by 
National Science Foundation grant DMS-0905820} 

\keywords{Scaling exponent, directed polymer, random environment, Brownian queue,
superdiffusivity, Burke's theorem, partition function, geometric Brownian motion, Dufresne's
identity}
\subjclass[2000]{60K35, 60K37, 82B41, 82D60} 
\date{\today}
\begin{abstract} 
We study the scaling exponents of a  1+1-dimensional directed polymer in a Brownian  random
environment      introduced by O'Connell and  Yor.   For a version of the model with
  boundary conditions that are stationary in a space-time  sense we identify the
exact values of the exponents.  For the version without the boundary conditions we
get the conjectured upper bounds on the exponents.  
\end{abstract}
\maketitle
%\begin{center}
%{\bf }
%
%\today
%\end{center} 

\section{Introduction}
%\section{The directed polymer in a Brownian environment}

We study the scaling exponents of a directed polymer model in 1+1  
  dimensions (one space dimension plus  time dimension)
   whose random environment is constructed from
Brownian motions.   For a positive integer $n$ and an inverse 
temperature  parameter $\beta>0$, 
the partition function is defined by 
\be
\fren_n(\beta)=   
\int\limits_{0<s_{1}<\dotsm<s_{n-1}<n}  \exp\bigl[ \beta\bigl(B_1(0,s_1) +B_{2}(s_1,s_{2})
+\dotsm + B_n(s_{n-1},n)\bigr)\bigr] \,ds_{1,n-1} 
\label{zetadef1}\ee
where $\{B_j\}$ are independent one-dimensional standard Brownian motions,
$B(s,t)=B(t)-B(s)$, and 
$ds_{1,n-1}$ $=$ $ds_1ds_2 \dotsm ds_{n-1}$.  
This model was   introduced
 by O'Connell and  Yor   \cite{oconn-yor-01} in connection with a related polymer model
 that they named  the  
{\sl generalized Brownian queue}.  Subsequently the exact limiting free energy density
${p(\beta)}$ was computed by
Moriarty and O'Connell \cite{mori-oconn-07}. To state their result, recall the  
  gamma function   $\Gamma(s)=\int_0^\infty x^{s-1}e^{-x}\,dx$ and the digamma 
%and   trigamma 
function   $\digamf=\Gamma'/\Gamma$.% and $\trigamf=\digamf'$.
%$\digamf$ is concave and increasing and $\trigamf$ is positive, convex
%and strictly decreasing with $\trigamf(0+)=\infty$ and $\trigamf(\infty)=0$.
%Extend $-\digamf$ to a lower semicontinuous convex function on $\bR$ by
%setting  $-\digamf\equiv\infty$ on $(-\infty, 0]$, and then 
%The  concave conjugate of $\digamf$ is 
% $(\digamf)^*(s)=\inf_{t>0}\{st+\digamf(s)\}$.  %\rem{Maybe shorter?}

\begin{theorem} [\cite{mori-oconn-07}]    For $\beta>0$ this almost sure limit holds:  
\be \lim_{n\to\infty} n^{-1}\log \fren_n(\beta)=  {p(\beta)}=
\inf_{t>0}\left\{t \beta^2-\digamf(t)  \right\}-2\log\beta.  
\nn \ee
 \label{zeta1thm}
  \end{theorem}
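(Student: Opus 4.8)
The plan is to establish the almost sure limit in two stages: a soft superadditivity argument for its existence, then an identification of its value through the exact solvability of the model (Burke's theorem for the Brownian queue and Dufresne's identity). For existence, a Brownian scaling $\beta B_j(s,t)\eqd\wt B_j(\beta^2 s,\beta^2 t)$ together with the substitution $s_k\mapsto\beta^{-2}s_k$ exhibits $\fren_n(\beta)$ as $\beta^{-2(n-1)}$ times a $\beta=1$ partition function over a rescaled horizon, and the Jacobian already accounts for the additive term $n^{-1}\log\beta^{-2(n-1)}\to-2\log\beta$. For the remaining factor, inserting a breakpoint into a unit interval around an integer time and separating the first $m$ Brownian motions from the last $n$ gives $\fren_{m+n}(\beta)\ge\fren'_m(\beta)\,\fren''_n(\beta)\,e^{-\beta C_m}$, with $\fren'_m,\fren''_n$ independent and built from disjoint blocks of Brownian motions and $C_m$ an integrable stationary correction (the oscillation of a Brownian motion over a unit interval). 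Kingman's subadditive ergodic theorem, in Liggett's form to absorb the mild non-stationarity, then yields an a.s.\ and $L^1$ limit $n^{-1}\log\fren_n(\beta)\to p(\beta)\in\bR$.

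To identify $p(\beta)$, follow O'Connell and Yor \cite{oconn-yor-01} and attach to the space--time strip an extra Brownian motion carrying a drift governed by a parameter $\lambda>0$, producing a boundary partition function $\fren_n^\lambda(\beta)$. For this particular boundary the Brownian analogue of Burke's theorem asserts that the successive free-energy increments $R_k:=\fren_k^\lambda(\beta)/\fren_{k-1}^\lambda(\beta)$, which encode the output of a stationary Brownian queue, form an i.i.d.\ sequence; Dufresne's identity then gives their common law explicitly in terms of an exponential functional of a geometric Brownian motion, which is inverse-Gamma distributed. A short computation using $\bE[\log\Gamma_s]=\digamf(s)$ for $\Gamma_s\sim\mathrm{Gamma}(s)$ yields $\bE[\log R_k]=\lambda\beta^2-\digamf(\lambda)-2\log\beta$, so by the strong law of large numbers $n^{-1}\log\fren_n^\lambda(\beta)\to\lambda\beta^2-\digamf(\lambda)-2\log\beta$ almost surely, for every $\lambda>0$.

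It remains to sandwich the bulk model between boundary models. In a suitable coupling the boundary partition function dominates the bulk one, $\fren_n(\beta)\le\fren_n^\lambda(\beta)$, so minimizing over the parameter gives $p(\beta)\le\inf_{\lambda>0}\{\lambda\beta^2-\digamf(\lambda)\}-2\log\beta$; since $\digamf'=\trigamf$ is positive and strictly decreasing on $(0,\infty)$, the map $\lambda\mapsto\lambda\beta^2-\digamf(\lambda)$ is strictly convex with derivative $\beta^2-\trigamf(\lambda)$, so this infimum is uniquely attained at $\lambda^\ast=\trigamf^{-1}(\beta^2)$ and equals the claimed value. For the reverse inequality one decomposes $\fren_n^{\lambda^\ast}(\beta)$ according to the amount of ``time'' the polymer spends on the boundary edge: the contribution of paths using only $o(n)$ of the edge is at most a constant times $\fren_n(\beta)$, while the contribution of paths with a macroscopic edge portion grows, at the optimal tilt $\lambda^\ast$, no faster than the bulk rate $p(\beta)$ --- at that parameter the edge drift matches the bulk's characteristic direction, so a macroscopic edge excursion is no longer advantageous. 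Hence $n^{-1}\log\fren_n^{\lambda^\ast}(\beta)\le n^{-1}\log\fren_n(\beta)+o(1)$ and $p(\beta)\ge\lambda^\ast\beta^2-\digamf(\lambda^\ast)-2\log\beta=\inf_{t>0}\{t\beta^2-\digamf(t)\}-2\log\beta$.

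The main obstacle is exactly this last ``no macroscopic exit'' estimate at the optimal parameter, in the spirit of the bounds developed for exactly solvable last-passage percolation and the log-gamma polymer; establishing the i.i.d.\ structure and the exact inverse-Gamma law of the increments from Burke's theorem and Dufresne's identity is a secondary, more algebraic, difficulty. An alternative route --- plausibly that of \cite{mori-oconn-07} --- bypasses the boundary model: combine a concentration estimate for $\log\fren_n(\beta)$ with the explicit formula for $\bE[\fren_n(\beta)^\theta]$ as a ratio of products of Gamma functions, which follows from the Markovian structure of the horizon-indexed process $t\mapsto\fren_n(t)$, and extract the exponential growth rate by Laplace's method, the digamma function appearing as $\tfrac{d}{ds}\log\Gamma(s)$ at the saddle point.
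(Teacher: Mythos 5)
The paper does not actually prove this theorem: it quotes it from Moriarty and O'Connell \cite{mori-oconn-07} and describes their method, which is precisely the route you propose --- compute the stationary queue/boundary model exactly via the Burke property and Dufresne's identity, then transfer to the point-to-point partition function by large-deviation reasoning. Your individual ingredients are sound and match objects the paper sets up: the scaling identity \eqref{brscal}; superadditivity with an integrable oscillation correction and Liggett's form of Kingman for existence of the limit; the fact that for fixed $t$ the increments $r_k(t)=\log Z^\lambda_k(t)-\log Z^\lambda_{k-1}(t)$ are i.i.d.\ with $e^{-r_k(t)}\sim\mathrm{Gamma}(\lambda,1)$ (Theorem \ref{burkethm} with $s_1=\dots=s_n=t$, plus Dufresne), whence by \eqref{rB1} one gets $n^{-1}\log Z^\lambda_n(n\beta^2)\to\lambda\beta^2-\digamf(\lambda)$ a.s.\ (note this is a triangular array, since the summands $r_k(n\beta^2)$ change with $n$, so you need a moment/Borel--Cantelli argument rather than the literal SLLN); and the domination $Z^\lambda_n(t)\ge e^{r_1(0)}Z_{1,n}(0,t)$, i.e.\ \eqref{bound}, which after optimizing over $\lambda$ gives $p(\beta)\le\inf_{\lambda>0}\{\lambda\beta^2-\digamf(\lambda)\}-2\log\beta$.

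The genuine gap is the step you flag yourself: the assertion that at $\lambda^*=\trigamf^{-1}(\beta^2)$ paths spending macroscopic time on the boundary contribute no more than the bulk rate is supported only by the heuristic ``the edge drift matches the characteristic direction,'' and as stated it is close to circular, since it implicitly uses how the bulk rate varies with the aspect ratio --- the very quantity being computed. The standard way to close it (essentially what \cite{mori-oconn-07} do) is to coarse-grain the exit point: the contribution with $\sigma_0\approx an$ (and likewise the exit along the other axis) to $n^{-1}\log Z^{\lambda^*}_n(n\beta^2)$ is at most $\lambda^* a+f(\beta^2-a)+o(1)$, where $f(\tau)=\lim n^{-1}\log Z_{1,n}(0,n\tau)$ exists by your Kingman step and is continuous in $\tau$; then invoke the upper bound you have already proved at the shifted aspect ratio, $f(\beta^2-a)\le\inf_\lambda\{\lambda(\beta^2-a)-\digamf(\lambda)\}$, which by strict convexity of $\lambda\mapsto\lambda\tau-\digamf(\lambda)$ is strictly below $\lambda^*(\beta^2-a)-\digamf(\lambda^*)$ for $a$ bounded away from $0$. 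Hence exits with $a\ge\delta$ are exponentially suppressed relative to the stationary rate $\lambda^*\beta^2-\digamf(\lambda^*)$, and letting $\delta\downarrow 0$ forces $f(\beta^2)\ge\lambda^*\beta^2-\digamf(\lambda^*)$. (A quantitative version of exactly this ``no macroscopic exit at the optimal tilt'' estimate is what the present paper proves in Section \ref{sec:freeZ} via Lemma \ref{LBlemma3}, so the step is certainly fillable --- but in your write-up it is asserted, not proved.) With that supplement, and the minor repairs noted above, your argument is correct and is the same in spirit as the cited proof.
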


Moriarty and O'Connell extracted this result with the help of large deviation asymptotics  from 
{the generalized Brownian queuing system} whose limit is readily computable.  
In this related model  the polymer path is allowed to begin in the
infinite past and a decaying exponential factor is included inside the integral
to make  the partition function converge.  We also work with the Brownian queueing system
  to obtain  an estimate on the fluctuations:
 
\begin{theorem} There exist finite, positive  $\beta$-dependent constants
$b_0, n_0, C$ such that for  $b\ge b_0$ and $n\ge n_0$ 
\[  \bP\bigl( \,\abs{\log \fren_n(\beta) -n {p(\beta)}}\ge bn^{1/3} \bigr)
\le Cb^{-3/2}.  \]
\label{zeta2thm}\end{theorem}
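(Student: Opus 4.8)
The plan is to follow the route developed for exactly solvable $1{+}1$-dimensional models (Hammersley's process, the corner growth model, the log-gamma polymer): realize $\fren_n(\beta)$ inside a stationary family, exploit a Burke-type invariance to obtain explicit stationary increments, derive an exact variance identity, and convert it into the fluctuation bound via a coupling that tracks where the polymer measure detaches from the boundary. Concretely, I would embed the model into the O'Connell--Yor generalized Brownian queueing system carrying a free boundary-drift parameter $\theta>0$: attaching a boundary Brownian motion with drift governed by $\theta$ produces a stationary partition function $\fren_n^{\theta}$ whose increments in the space-time directions form a stationary field. The Burke-type property of the Brownian queue (from \cite{oconn-yor-01}) — the output of a stationary queue is again a family of independent Brownian motions, independent of the environment on one side — combined with Dufresne's identity, which identifies the reciprocal of an exponential functional of a Brownian motion with negative drift as a Gamma variable, pins down the marginal laws of the boundary increments of $\log\fren_n^{\theta}$ explicitly (of reciprocal-Gamma / geometric-Brownian-motion type).

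From the stationary structure I would next extract an exact variance identity of the schematic form
\[
\Var\bigl(\log\fren_n^{\theta}\bigr) \;=\; (\text{an explicit linear-in-}n\text{ term}) \;-\; 2\,\bE\bigl[\,\text{(boundary free energy accumulated up to the exit location }\xexit)\,\bigr],
\]
where $\xexit$ is, roughly, the point at which the dominant mass of the quenched polymer measure leaves the boundary. Tuning $\theta$ to the characteristic value $\theta_*=\theta_*(\beta)$ (the minimizer appearing in Theorem \ref{zeta1thm}) centers the subtracted term, reducing the bound to the estimate $\bE\abs{\xexit}\lesssim n^{2/3}$, and more generally $\Var(\log\fren_n^{\theta'})\lesssim \bigl(1+\abs{\theta'-\theta_*}\,n^{1/3}\bigr)^{1/2}\,n^{2/3}$ for $\theta'$ within and slightly beyond the $n^{-1/3}$ scaling window of $\theta_*$.

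The exit-point estimate is the heart of the matter and would be proved by comparison: if the polymer detaches from the boundary beyond a position $r$, it has collected boundary weight along a segment of length $\asymp r$ while the bulk partition function from that point on must be large enough to compete; comparing with a stationary model whose boundary drift is shifted by $\asymp r/n$ (so that $r$ is the typical exit location for the shifted model), and using monotonicity of $\log\fren$ in the boundary weights together with the explicit, essentially Gaussian fluctuations of boundary free energy over a length-$r$ segment, one sees that $\{\xexit\ge r\}$ forces an atypical fluctuation of order $r^2/n$ for that segment, which is very unlikely once $r\gtrsim n^{2/3}$. Feeding this into the variance identity yields $\Var(\log\fren_n^{\theta_*})\lesssim n^{2/3}$ and the $b$-dependent bound off characteristic. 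The upper tail of $\log\fren_n(\beta)$ is then immediate: incorporating the boundary weights only enlarges the family of polymer paths, so $\fren_n(\beta)\le\fren_n^{\theta_*}$, and Chebyshev with $\Var(\log\fren_n^{\theta_*})\lesssim n^{2/3}$ and $\bE\log\fren_n^{\theta_*}=n{p(\beta)}+O(n^{1/3})$ gives $\bP\bigl(\log\fren_n(\beta)-n{p(\beta)}\ge bn^{1/3}\bigr)\lesssim b^{-2}$, stronger than required.

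For the lower tail I would compare $\fren_n(\beta)$ with the stationary model at a parameter $\theta'$ displaced from $\theta_*$ by $\asymp b^{1/2}n^{-1/3}$ (either direction, using strict convexity of the stationary free energy $\pi(\theta)=\theta\beta^2-\digamf(\theta)-2\log\beta$ about its minimum ${p(\beta)}$ at $\theta_*$). Writing $\fren_n^{\theta'}=\fren_n(\beta)\,e^{\Delta}$ with $\Delta\ge0$ a boundary free energy discrepancy controlled by the shifted model's exit point, and using $\bE\log\fren_n^{\theta'}\approx n\pi(\theta')=n{p(\beta)}+c\,b\,n^{1/3}$, the event $\{\log\fren_n(\beta)\le n{p(\beta)}-bn^{1/3}\}$ is contained — up to the event $\{\Delta\ge\tfrac12 bn^{1/3}\}$, which has a negligible tail by the explicit increment laws — in a lower deviation of order $bn^{1/3}$ for $\log\fren_n^{\theta'}$; Chebyshev with $\Var(\log\fren_n^{\theta'})\lesssim b^{1/2}n^{2/3}$ then produces exactly $\lesssim b^{1/2}n^{2/3}/(b^2n^{2/3})=b^{-3/2}$. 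Combining the two tails gives the theorem. I expect the main obstacle to be the exit-point estimate of the previous paragraph: carrying the comparison and the needed a priori fluctuation (and centering) bounds for boundary free energies through in the continuous Brownian environment, where the discrete combinatorial identities available for log-gamma-type models must be replaced by Dufresne-type and geometric-Brownian-motion computations and where the exit location is a continuum quantity requiring extra care in the queueing recursion.
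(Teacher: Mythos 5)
Your overall architecture is the same as the paper's (stationary O'Connell--Yor model with a boundary drift, Burke property plus Dufresne's identity, a variance identity in terms of the exit point $\sigma_0$ as in Theorem \ref{varthm}, an exit-point bound at scale $n^{2/3}$, then a comparison between $Z_{1,n}(0,t)$ and the stationary partition function), but the two steps that carry all the difficulty are gapped. For the exit-point estimate, your mechanism --- that $\{\sigma_0\ge r\}$ forces an atypical fluctuation of order $r^2/n$ of the \emph{boundary} free energy over a length-$r$ segment, which is then ``very unlikely'' by essentially Gaussian tails --- is not what the comparison delivers. Tilting the boundary drift to $\lambda=\para+br/n$ bounds $Q_{n,t}(\sigma_0\ge r)$ by $e^{(\para-\lambda)r}\,Z^\lambda_{n,t}/Z^\para_{n,t}$, so the deviation you must control is that of $\log Z^\lambda_{n,t}-\log Z^\para_{n,t}$, a difference of \emph{full} free energies at two parameters; it cannot be reduced to the boundary Brownian segment because the quenched measure reweights the bulk as well. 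The paper controls it only by Chebyshev through the variance identity, and since $\Vvv(\log Z^\lambda_{n,t})$ again involves exit-point moments this requires the perturbation Lemma \ref{4.1lem} and the bootstrap closing in Lemma \ref{lem_sigtail} and the moment bound \eqref{sigmamom}; the output is the polynomial tail $b^{-3}$ of Proposition \ref{sigmaprop}, not a Gaussian tail. Your sketch contains no substitute for this bootstrap, which is the heart of the proof of Theorem \ref{UBthm}.

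The second gap is in your lower-tail argument. With $t=n\trigamf(\para)$ the stationary mean $-n\digamf(\theta')+\theta' t$ is \emph{minimized} at $\theta'=\para$, so for your shift $|\theta'-\para|\asymp b^{1/2}n^{-1/3}$ the discrepancy $\Delta=\log\fren^{\theta'}_n-\log\fren_n$ has mean of order $bn^{1/3}$ itself; the event $\{\Delta\ge\tfrac12 bn^{1/3}\}$ is therefore not negligible, and in any case an upper-tail bound for $\Delta$ is not a consequence of ``explicit increment laws'': $\Delta$ is the log of the ratio of the stationary to the free partition function, and bounding $\P\bigl(Z^\para_{n,t}/Z_{1,n}(0,t)\ge e^{bn^{1/3}}\bigr)$ is exactly the hardest estimate of the whole argument. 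The paper does this at the characteristic parameter in \eqref{free3}, cutting on $|\sigma_0|\le\sqrt b\,n^{2/3}$ and invoking Lemma \ref{LBlemma3} (whose proof needs the comparison inequalities \eqref{comp1}--\eqref{comp2} and the reversal/duality of Lemma \ref{LEMrev}) together with the exit tail \eqref{sigmatail1}; the balance $a=\sqrt b$ there is precisely where the exponent $b^{-3/2}$ comes from. Also, $\fren_n\le\fren^{\theta_*}_n$ is not literally true: the correct inequality is $Z^\para_{n,t}\ge e^{r_1(0)}Z_{1,n}(0,t)$ with a Gamma-type factor $e^{r_1(0)}$, which is harmless but must be carried along, as in the paper's treatment of \eqref{bound}.
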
 

The conjectured behavior for directed polymers  is that the 
  order of magnitude of the  fluctuations of $\log\fren_n(\beta)$ 
is  $n^\chi$ and in 1+1 dimensions this exponent takes the value 
 $\chi=1/3$  at all inverse temperatures $\beta>0$.  
Theorem \ref{zeta2thm} gives the expected  upper
bound on the exponent:  $\chi\le 1/3$.  In Theorem \ref{thm:freepath2} below
we give the corresponding upper bound on the fluctuations
of the polymer path.  

\medskip

 The generalized Brownian queueing system amounts
to putting boundary conditions on the polymer  \eqref{zetadef1}  that are stationary in a natural 
two-dimensional manner.   This stationarity comes from a 
  Burke-type  property  
discovered by O'Connell and  Yor   \cite{oconn-yor-01}, see Lemma \ref{lemburke} below. 
 For the  model with boundary conditions 
we identify the exact scaling exponents.  Our 
analysis of these models adapts  the steps of the recent work  \cite{sepp-poly} where a 
discrete lattice model with analogous properties was discovered and its 
scaling exponents studied.   The roots of the proofs in  \cite{sepp-poly}  can be traced back
to the seminal paper \cite{cato-groe-06}.  In the context of maximal increasing paths on planar
Poisson points, \cite{cato-groe-06}   was the first to chart a path to 
the scaling exponents  of a two-dimensional growth  model  without asymptotic
analysis of Fredholm determinants.

 Suboptimal but still highly nontrivial  
bounds on scaling exponents in 1+1 dimensional polymer models have been obtained
for Brownian polymers in Poisson environments 
 \cite{come-yosh-05,wuth-98aihp,wuth-98aop}, 
 for  Gaussian random walk    in a Gaussian environment 
   \cite{meja-04,petermann},  and  for the related model of  first passage percolation 
\cite{lice-newm-piza,newm-piza}.  

The overall situation in 1+1 dimensional polymers is now similar to that 
for two-dimensional directed last-passage percolation models,
which are of course closely related as zero-temperature directed  polymers.  In both
areas  there is a Brownian model and some particular discrete models that are amenable 
to explicit computations. Models with general distributions remain  beyond the reach of current techniques. 

Currently results are farther along for last-passage models:
 in addition to exponents, explicit Tracy-Widom  limit distributions are known.  
  Key results   appear in the papers 
 \cite{baik-deif-joha-99, bala-cato-sepp, bary-01, cato-groe-06,
ferr-spoh-06,grav-trac-wido-01, joha, joha-ptrf-00}.
The connection between last passage models and random matrix theory has been one
of the major inspirations of the subject.  The recent article \cite{oconn-toda} finds 
a connection between the Brownian polymer  \eqref{zetadef1}  and the quantum Toda lattice and 
proposes this as the possible polymer analogue of the random matrix connection
of last-passage percolation.   

There is one  directed polymer model which is essentially solved: the continuum random directed polymer in 1+1 dimension which describes a Brownian path in a white noise environment. 
 The free energy is defined as
\be \nonumber
\log E \left[:\!\exp\!:  \left\{-\int_0^T  \dot{\mathcal W}(t,b(t)) dt    \right\}     \right]
\ee
where $\dot{\mathcal W}$ is a space-time white noise,  $b$ is a Brownian motion (or bridge) and $:\!\!\exp\!\!:$ is the Wick-ordered exponential. In \cite{bala-quas-sepp} the exact scaling exponent is determined in the case of a stationary boundary condition using a connection to the KPZ equation and the
weakly asymmetric simple exclusion process. 
In \cite{amir-corw-quas} the same connection was used to compute the exact distribution of the free energy in the point-to-point setting, and the Tracy-Widom distribution is derived in the appropriate scaling limit. (See also \cite{sasa-spoh}).

 % \rem{We should probably add a note on Amir-Corwin-Quastel: http://arxiv.org/abs/1003.0443}

\medskip

{\sl Some frequently used notation.} 
We write $f(s,t)=f(t)-f(s)$ for increments, without assuming that $s\le t$. 
$\bZ_+=\{0,1,2,\dotsc\}$, $\bN=\{1,2,3,\dotsc\}$ and $\bR_+=[0,\infty)$. 
  $\overline{X}=X-\E X$ denotes a centered random variable.  
In general a superscript $\om$ is added to a symbol whenever its dependence
on a particular realization of the environment needs to be made explicit.  

\section{Models and results}

%\rem{Or: \textsc{Models and results}}

We begin with precise definitions of two polymer models, the one already encountered
and another one with a boundary.  
$B$ and $\{B_k: k\in\bN\}$ 
 denote  independent  standard  Brownian motions indexed by $\bR$.
 They form the random environment $\om$ under probability measure $\bP$. 

By Brownian scaling $\{B(ct)\}\eqd\{c^{1/2}B(t)\}$  
the parameter $\beta$ can be removed  from the exponent in \eqref{zetadef1}
and replaced by a parameter that controls the upper limit of integration.  
This is   convenient for us,  so instead of  $\fren_n(\beta)$ 
 we work with the family 
\be Z_{j,k}(s,t) = 
\int\limits_{s<s_{j}<\dotsm<s_{k-1}<t}  \exp\bigl[ B_j(s,s_j) +B_{j+1}(s_j,s_{j+1})
+\dotsm + B_k(s_{k-1},t)\bigr] \,ds_{j,k-1}
\label{Zdef1.1}\ee
where $1\le j\le k\in \bN$ and $s<t\in\bR$.  Occasionally we may also write
$Z_{(j,k),(s,t)} =Z_{j,k}(s,t)$.  The distributional identity is 
\be \fren_n(\beta)\eqd\beta^{-2(n-1)}Z_{1,n}(0,{n}\beta^2).\label{brscal}\ee
 This is the last appearance of the partition function $Z_n(\beta)$ defined by \eqref{zetadef1}
 in the paper.   Similar notation will be used below for other partition functions.

The partition function is the normalizing constant for the quenched polymer 
distribution $Q_{(j,k),(s,t)}$.  This is a probability measure on 
nondecreasing cadlag paths  
  $x:[s,t]\to\{j,j+1,\dotsc,k\}$ that go from $x(s)=j$ to $x(t)=k$.  We represent these
paths   in terms of the jump times 
 $s<\sigma_j<\sigma_{j+1}<\dotsm<\sigma_{k-1}\le t $ {where}
$x(\sigma_i-) = i< i+1=x(\sigma_i)$.  
The   measure $Q_{(j,k),(s,t)}$ is defined   by 
\be\begin{aligned} 
&E^{Q_ {(j,k),(s,t)}}f(\sigma_j,\dotsc,\sigma_{k-1})  
=\frac1{Z_{{(j,k),(s,t)}}} 
\int\limits_{s<s_{j}<\dotsm<s_{k-1}<t}  f(s_j,\dotsc, s_{k-1}) 
 \\[7pt]
 &\qquad\qquad\qquad  \times  \exp\bigl[ B_j(s,s_j) +B_{j+1}(s_j,s_{j+1})
+\dotsm + B_k(s_{k-1},t) \bigr] \,ds_{j,k-1}.
\end{aligned}\label{Qdefjkst}\ee
This measure is called quenched because the environment of Brownian motions
is fixed.  Integrating away  the environment gives the 
annealed expectation  $E_{(j,k),(s,t)}(\cdot)=\bE E^{Q_{(j,k),(s,t)}}(\cdot)$.  

In addition to the digamma function we also need its derivative, the trigamma function $\trigamf=\digamf'$.
$\digamf$ is concave and increasing and $\trigamf$ is positive, convex
and strictly decreasing with $\trigamf(0+)=\infty$ and $\trigamf(\infty)=0$.
%Extend $-\digamf$ to a lower semicontinuous convex function on $\bR$ by
%setting  $-\digamf\equiv\infty$ on $(-\infty, 0]$, and then 
Theorem \ref{zeta1thm} is equivalent to the statement 
\be 
\lim_{n\to\infty}n^{-1}\log Z_{1,n}(0, n\tau)= \trigamf(\para)\para- \digamf(\para)
\quad \text{$\P$-a.s.}
\label{Zlim5}\ee
where   $\tau>0$ and   $\para$ is the unique value such that  
$\trigamf(\para)=\tau$. 
 Theorem \ref{zeta2thm} will be proved in Section \ref{sec:freeZ} in terms of 
$Z_{1,n}(0, n\tau)$.  In conjunction with Theorem \ref{zeta2thm} goes an upper
bound on the fluctuations of the path, also proved in Section \ref{sec:freeZ}.  

\begin{theorem}\label{thm:freepath2}
Let $\tau>0$ and    $0<\gamma<1$. Then for all large enough $n$
 and $b$    
\be 
P_{(1,n),(0,n\tau)}\bigl(|\sigma_{\lfloor n \gamma\rfloor}-n\gamma \tau |>b n^{2/3}   \bigr)\le 
C(\tau) b^{-3}.
\ee 
\end{theorem}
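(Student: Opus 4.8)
\emph{Setup.} Write $m=\lfloor n\gamma\rfloor$, $t^\ast=m\tau$ (the target $n\gamma\tau$ differs from $t^\ast$ by $O(1)$, negligible at scale $n^{2/3}$), and let $\functaa$ be the free energy density of \eqref{Zlim5}, so $\functaa(\tau')=\trigamf(\para')\para'-\digamf(\para')$ with $\trigamf(\para')=\tau'$, whence $\functaa'(\tau')=\para'$ and $\functaa''(\tau')=1/\trigamf'(\para')<0$. The plan is to work from the convolution identity obtained by conditioning on the jump time $\sigma_m$: the weight in \eqref{Zdef1.1} factors at $\sigma_m=u$ into levels $1,\dots,m$ on $[0,u]$ and levels $m+1,\dots,n$ on $[u,n\tau]$, so
\[
Z_{1,n}(0,n\tau)=\int_0^{n\tau}Z_{1,m}(0,u)\,Z_{m+1,n}(u,n\tau)\,du,\qquad
P_{(1,n),(0,n\tau)}(\sigma_m\in A)=\bE\!\left[\frac{\int_A Z_{1,m}(0,u)Z_{m+1,n}(u,n\tau)\,du}{Z_{1,n}(0,n\tau)}\right],
\]
the two factors in the integrand being independent since they use the disjoint families $B_1,\dots,B_m$ and $B_{m+1},\dots,B_n$. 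I treat the right tail $\{\sigma_m\ge t^\ast+bn^{2/3}\}$; the left tail is symmetric.

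\emph{Deterministic shape.} Set $\phi(u)=m\functaa(u/m)+(n-m)\functaa((n\tau-u)/(n-m))$, so $e^{\phi(u)}$ is the order of magnitude of the quenched integrand. Using that $\functaa$ is strictly concave with $\functaa''$ bounded away from $0$ and $-\infty$ on compact sets of arguments, $\phi$ is strictly concave, maximized at $u=t^\ast$ with $\phi(t^\ast)=n\functaa(\tau)$, and $\phi(u)\le n\functaa(\tau)-c_0(u-t^\ast)^2/n$ for some $c_0=c_0(\tau,\gamma)>0$. Thus displacing $\sigma_m$ by $r$ costs at least $c_0r^2/n$ in the ideal exponent, which for $r\asymp bn^{2/3}$ is $\asymp b^2n^{1/3}$, the same order as the free-energy fluctuations controlled by Theorem~\ref{zeta2thm}. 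This balance between a quadratic transversal cost and $n^{1/3}$ longitudinal fluctuations is what pins the exponent to $2/3$.

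\emph{Putting it together.} Cut the right tail into dyadic blocks $I_j=[t^\ast+2^jbn^{2/3},\,t^\ast+2^{j+1}bn^{2/3}]$, $j\ge0$ (the last one stopped at $n\tau$; the crude region $u$ near $n\tau$, where $\phi$ is extremely negative, is disposed of separately). On $I_j$ I want (i) a uniform upper bound $\log Z_{1,m}(0,u)+\log Z_{m+1,n}(u,n\tau)\le\phi(u)+a_jn^{1/3}$ for $u\in I_j$, and (ii) the lower bound $\log Z_{1,n}(0,n\tau)\ge n\functaa(\tau)-an^{1/3}$, with $a_j$ a suitably small multiple of $4^jb^2$ and $a$ a small multiple of $b^2$. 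Granting (i)--(ii), the shape estimate makes the numerator over $I_j$ at most $|I_j|\,e^{n\functaa(\tau)-c_04^jb^2n^{1/3}+a_jn^{1/3}}$, hence the corresponding piece of the ratio at most $e^{-c'4^jb^2n^{1/3}}$, which for $n$ large is negligible against everything else. By Theorem~\ref{zeta2thm}, applied to $Z_{1,m}(0,u)$, $Z_{m+1,n}(u,n\tau)$ and $Z_{1,n}(0,n\tau)$ (all having time-to-level ratio in a fixed compact subset of $(0,\infty)$), the probability that (i) or (ii) fails on $I_j$ is $O\big((4^jb^2)^{-3/2}\big)=O(8^{-j}b^{-3})$, and summing over $j\ge0$ yields $P_{(1,n),(0,n\tau)}(\sigma_m\ge t^\ast+bn^{2/3})\le C(\tau)b^{-3}$. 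The exponent $3$ is exactly the exponent $3/2$ of Theorem~\ref{zeta2thm} multiplied by the factor $2$ coming from the quadratic shape cost.

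\emph{The main obstacle.} The genuine difficulty is step (i): Theorem~\ref{zeta2thm} is a one-point estimate with only polynomial tails, so a union bound over a net in $I_j$ (with $\asymp n$ lattice-spaced points) loses an unaffordable factor of order $n$. The remedy is to exploit monotonicity — $u\mapsto Z_{1,m}(0,u)$ is increasing and $u\mapsto Z_{m+1,n}(u,n\tau)$ is decreasing — to reduce $\sup_{u\in I_j}$ of each factor to its value at one endpoint of $I_j$; but this reintroduces a mismatch $m\big[\functaa(\text{right end}/m)-\functaa(\text{left end}/m)\big]\asymp 2^jbn^{2/3}$ between the endpoints, which must be shown to be over-compensated by the simultaneous decrease of the companion factor. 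Controlling this is really a regularity statement for the partition function in its time variable, and — as in \cite{sepp-poly} — is where the Burke-type/queueing structure enters: the clean tool is the stationary (boundary) polymer, whose analogue of $\sigma_m$ is an exit point with fluctuations controlled directly by a variance identity in the spirit of \cite{cato-groe-06}, after which a monotone comparison transfers the bound to the free polymer. Establishing that exit-point estimate and the comparison is the substance behind Theorem~\ref{thm:freepath2}.
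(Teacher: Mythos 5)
There is a genuine gap. Your argument is organized around step (i): a bound on $\log Z_{1,m}(0,u)+\log Z_{m+1,n}(u,n\tau)$ that holds \emph{uniformly} over $u$ in each dyadic block, with only an $O(4^jb^2 n^{1/3})$ allowance. You correctly identify that Theorem \ref{zeta2thm}, being a one-point estimate with polynomial tails, cannot deliver this by a union bound, but the remedy you offer does not work: the monotonicity claims are false ($u\mapsto Z_{1,m}(0,u)$ is not increasing -- already for $m=1$ it equals $e^{B_1(u)}$, and in general the final increment $B_m(s_{m-1},u)$ destroys monotonicity; similarly for $Z_{m+1,n}(u,n\tau)$), and even granting some endpoint reduction, the mismatch you yourself compute is of order $2^jb\,n^{2/3}$ in the exponent, which overwhelms the $n^{1/3}$ budget and cannot be absorbed by the quadratic shape gain without precisely the kind of two-sided regularity-in-$u$ estimate that is the hard content of the theorem. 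Your closing paragraph names the correct mechanism (comparison with the stationary boundary model and its exit-point control) but does not carry it out, so the proof is not complete: what remains unproven is exactly the substance of the result.

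For comparison, the paper sidesteps the uniformity problem entirely. From \eqref{ZB2} one has the pointwise domination $Z^\para_{\ell,s}\ge e^{r_1(0)}Z_{1,\ell}(0,s)$ for every $s$ simultaneously (a single random constant $e^{-r_1(0)}$, not an event that must hold over a net), so the whole tail integral is bounded at once:
\begin{equation*}
\frac1{Z_{1,n}(0,t)}\int_{|s-\gamma t|>u} Z_{1,\ell}(0,s)\,Z_{\ell+1,n}(s,t)\,ds
\;\le\; \frac{e^{-r_1(0)}\,Z^\para_{n,t}}{Z_{1,n}(0,t)}\;Q^\para_{n,t}\bigl(|\sigma_\ell-\gamma t|>u\bigr),
\end{equation*}
with $\ell=\lfloor n\gamma\rfloor$, $t=n\tau$, $u=bn^{2/3}$. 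The three factors are then controlled separately: $e^{r_1(0)}$ has bounded density near $0$ (Gamma inverse), the ratio $Z^\para_{n,t}/Z_{1,n}(0,t)$ is at most $e^{rn^{1/3}}$ outside probability $Cr^{-3/2}$ by \eqref{free3}, and the stationary-model quenched tail $Q^\para_{n,t}(|\sigma_\ell-\gamma t|>u)$ is handled by the shift identity \eqref{eq:path1} together with Lemma \ref{lem_sigtail}, whose proof rests on the Burke property and the variance identity. Integrating over the level $h\in(b^{-3},1)$ of the quenched probability yields the annealed bound $C(\tau)b^{-3}$. If you want to salvage your write-up, replace step (i) and the dyadic blocks by this single comparison inequality and then invoke the exit-point tail bound for the boundary model; without that, the argument as written does not close.
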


Theorem \ref{thm:freepath2} says that the path   stays close to the diagonal of the rectangle
$[1,n]\times[0,n\tau]$, and typical  fluctuations away from the diagonal have order of
magnitude at most $n^{2/3}$.  This gives an upper bound $\zeta\le 2/3$ for the second
basic scaling exponent $\zeta$  which describes the fluctuations of the polymer path. 

\medskip

These are the results for the polymer without a boundary, and we turn to discuss
the model with boundary.  For this model the upper bounds of Theorems 
\ref{zeta2thm} and \ref{thm:freepath2} are combined with matching lower bounds,
so we have the precise values of the scaling exponents.    An additional 
  parameter $\para>0$ is introduced in this model.  
 
The partition function is 
\be\begin{aligned} Z_{n}^\para(t) &= Z^\para_{n,t} =
\int_{-\infty<s_{0}<s_1<\dotsm<s_{n-1}<t}  \exp\bigl[ -B(s_0)+\para s_0 \\[7pt]
 &\qquad  +  B_1(s_0,s_1) 
+B_{2}(s_1,s_{2}) +\dotsm + B_n(s_{n-1},t)\bigr] \,ds_{0,n-1}.
\end{aligned}\label{Zdef2}\ee
 The quenched polymer measure $Q^\para_{n,t}$
lives on    nondecreasing cadlag paths    
  $x:(-\infty,t]\to\{0,1,\dotsc,n\}$ that go from $x(-\infty)=0$ to $x(t)=n$.  Again 
we represent these     in terms of  jump times 
 $-\infty<\sigma_0<\sigma_1<\dotsm<\sigma_{n-1}\le t $ {where}
$x(\sigma_i-) = i< i+1=x(\sigma_i)$.  
The   measure is defined   by 
\be\begin{aligned} 
&E^{Q^\para_{n,t}}f(\sigma_0,\sigma_1,\dotsc,\sigma_{n-1})  
=\frac1{Z_{n}^\para(t) } 
\int_{-\infty<s_{0}<\dotsm<s_{n-1}<t}  f(s_0, s_1,\dotsc, s_{n-1}) 
 \\[7pt]
 &\qquad  \times  \exp\bigl[ -B(s_0)+\para s_0 +  B_1(s_0,s_1) 
+B_{2}(s_1,s_{2}) +\dotsm + B_n(s_{n-1},t)\bigr] \,ds_{0,n-1}.
\end{aligned}\label{Qdef2}\ee
Annealed probability and expectation are   denoted by  
$P^\para_{n,t}(\cdot)=\bE Q^\para_{n,t}(\cdot)$  and 
 $E^\para_{n,t}(\cdot)=\bE E^{Q^\para_{n,t}}(\cdot)$, and simply
 $E(\cdot)$ when the parameters are understood. 
 
 \medskip
 
{\sl Notational remark.}   To simplify notation we shall not consistently 
carry the superscript $\para$ in the notation for the objects of the polymer model
with boundary.  The notational distinction between the two models is that 
the model without boundary has two space-time parameters represented by $((j,k),(s,t))$   in
definitions \eqref{Zdef1.1} and \eqref{Qdefjkst}, while the model with boundary
has only a single space-time parameter, namely $(n,t)$ in definitions
\eqref{Zdef2} and \eqref{Qdef2}.  When dependence on the environment $\om$ 
needs to be displayed explicitly, $\om$ is added as a superscript,  as   for example in 
$Q^{\para,\om}_{n,t}$.  
 
\medskip
 
%Next  the results  for the model with boundary.  
The boundary conditions render the model stationary in a sense made precise
in Theorem \ref{burkethm}.  As a consequence some explicit computations can be
performed: from Theorems \ref{burkethm} and \ref{varthm} we obtain
\be
\E (\log Z_n^{\para}(t))=-n \digamf(\para)+\para t
\quad\text{and}\quad 
 \Vvv(\log Z_n^\para(t)) = n\trigamf(\para)-t+2 E^\para_{n,t}(\sigma_0^+).  
\label{meanvar}\ee

As we take the size of the polymer to
infinity,  the interesting exponents appear when the  endpoint follows approximately a   {\sl characteristic direction}
specified by the parameter $\para$.  The precise hypothesis is 
  \be \text{$t>0$ and $n\in\bN$ satisfy $\abs{t-n \trigamf(\para)}\le A n^{2/3} $  
  for a  constant $0\le A<\infty$. }   \label{assA}\ee 
The purpose of this assumption is to kill the first two terms of the variance formula in
\eqref{meanvar}.
The first theorem says that the exponent  $\chi=1/3$
describes  the order of fluctuations of $\log Z_n^{\para}(t)$.  

\begin{theorem}\label{varbdthm}  Assume \eqref{assA}.  
 Then there exist  constants $0<C_1 <C_2 <\infty$
 that depend on $(\para, A)$  such that for all
 $n\ge 1$ 
\be 
C_1 n^{2/3} \le \Vvv (\log Z_n^{\para}(t))\le  C_2 n^{2/3}.
\ee
\end{theorem}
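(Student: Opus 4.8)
The plan is to adapt the scheme of \cite{sepp-poly}: reduce the variance of $\log Z_n^\para(t)$ to a first absolute moment of the exit point $\sigma_0$, extract comparison estimates for the tails of $\sigma_0$ from the Burke-type stationarity of the boundary model, and then run a bootstrap for the upper bound together with a cube-root argument in the spirit of \cite{cato-groe-06} for the lower bound. \emph{Step 1: reduction to the exit point.} Differentiating the mean identity $\E(\log Z_n^\para(t))=-n\digamf(\para)+\para t$ from \eqref{meanvar} in $\para$ — the interchange of $\partial_\para$ and $\E$ is legitimate because $\para>0$ makes the $s_0\to-\infty$ tail of the weight decay exponentially — and using $\partial_\para\log Z_n^{\para,\om}(t)=E^{Q^{\para,\om}_{n,t}}(\sigma_0)$ gives $E^\para_{n,t}(\sigma_0)=t-n\trigamf(\para)$. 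Substituting $2\sigma_0^+=\sigma_0+\abs{\sigma_0}$ into the variance formula of \eqref{meanvar} then yields the clean identity
\[
\Vvv(\log Z_n^\para(t))=E^\para_{n,t}\abs{\sigma_0},
\]
valid for every $\para>0$ and every admissible $t$. Under \eqref{assA} the deterministic part satisfies $\abs{E^\para_{n,t}(\sigma_0)}=\abs{t-n\trigamf(\para)}\le An^{2/3}$, so the theorem is equivalent to the statement that the exit point lives exactly on scale $n^{2/3}$, i.e.\ $c_1n^{2/3}\le E^\para_{n,t}\abs{\sigma_0}\le c_2n^{2/3}$; since $E^\para_{n,t}(\sigma_0^+)-E^\para_{n,t}(\sigma_0^-)=t-n\trigamf(\para)=O(n^{2/3})$, the upper bound reduces to $E^\para_{n,t}(\sigma_0^+)\le Cn^{2/3}$. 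I would also record that $\partial_\para^2\log Z_n^{\para,\om}(t)=\Var_{Q^{\para,\om}_{n,t}}(\sigma_0)\ge0$, so that $\log Z_n^{\para,\om}(t)$ is convex in $\para$ and the quenched law of $\sigma_0$ is an exponential family in $\para$; in particular $\sigma_0$ is stochastically increasing in $\para$, giving a monotone coupling that underlies the comparisons below.

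\emph{Step 2: key exit-point estimates (the main obstacle).} Changing the boundary parameter from $\rho$ to $\rho'$ alters only the deterministic drift of the boundary weight $\exp(-B(s_0)+\rho s_0)$, so $Z_n^\rho$ and $Z_n^{\rho'}$ can be built on a common bulk environment $B_1,\dots,B_n$ and compared monotonically. Coupling the event $\{\sigma_0\ge u\}$ under parameter $\rho$ with the behaviour of the sub-polymer on the time interval $[u,t]$ — which after reindexing is a polymer whose effective boundary parameter has been nudged toward the characteristic direction — and invoking the identity of Step 1 at that nudged parameter, one should obtain comparison inequalities that bound the right tail of $\sigma_0$ at $\rho$ in terms of $E^{\rho'}_{n,t}\abs{\sigma_0}$ for a smaller parameter $\rho'<\rho$ (up to a correction from the drift change), and symmetrically the left tail of $\sigma_0$ at $\rho$ in terms of the variance at a larger parameter $\rho'>\rho$; these are effective precisely in the regime $\abs{\rho-\rho'}\asymp n^{-1/3}$, $u\asymp n^{2/3}$. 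These are the Brownian counterparts of the key lemmas of \cite{sepp-poly}, resting on the Burke-type property (Lemma \ref{lemburke}, Theorem \ref{burkethm}); making them rigorous — controlling the integrability of the geometric-Brownian-motion weights, justifying the differentiations, and carrying out the reindexing of the sub-polymers — is where I expect the genuine work to lie.

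\emph{Step 3: upper bound.} With the right-tail estimate of Step 2, a bootstrap over the scale $n$ gives $E^\para_{n,t}(\sigma_0^+)\le Cn^{2/3}$: starting from a crude a priori bound such as $\Vvv(\log Z_m^{\rho}(s))=O(m)$, uniform over $\rho$ in a fixed neighbourhood of $\para$ and $s$ obeying \eqref{assA}, one applies the key estimate with $\rho'=\para-bn^{-1/3}$, integrates in $u$ to bound $E^\para_{n,t}(\sigma_0^+)$, re-expresses $E^{\rho'}_{n,t}(\sigma_0^+)$ through the identity of Step 1 and the Taylor expansion $n\trigamf(\para-bn^{-1/3})-t=-b\,\trigamf'(\para)\,n^{2/3}+O\!\bigl((A+b^2)n^{2/3}\bigr)$, optimizes over the constant $b$, and iterates. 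Plugging $E^\para_{n,t}(\sigma_0^+)\le Cn^{2/3}$ back into the identity of Step 1 gives $\Vvv(\log Z_n^\para(t))\le c_2n^{2/3}$ for all large $n$; finitely many small $n$ are absorbed into the constant.

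\emph{Step 4: lower bound.} We may assume $\abs{t-n\trigamf(\para)}\le\varepsilon_0n^{2/3}$ for a small constant $\varepsilon_0$ (to be fixed), since otherwise $\Vvv(\log Z_n^\para(t))=E^\para_{n,t}\abs{\sigma_0}\ge\abs{t-n\trigamf(\para)}$ already has the right order. Suppose for contradiction that $\Vvv(\log Z_n^\para(t))\le\delta n^{2/3}$, and for a small constant $b$ (to be fixed) set $\rho=\para-bn^{-1/3}$. By Step 1 and Taylor expansion,
\[
E^\rho_{n,t}(\sigma_0)=t-n\trigamf(\rho)=\bigl(t-n\trigamf(\para)\bigr)-b\,\abs{\trigamf'(\para)}\,n^{2/3}+O(b^2n^{1/3}),
\]
which, once $\varepsilon_0<\tfrac12 b\,\abs{\trigamf'(\para)}$, is negative of order $bn^{2/3}$ for all large $n$; hence $E^\rho_{n,t}\abs{\sigma_0}\ge\abs{E^\rho_{n,t}(\sigma_0)}\ge c\,b\,n^{2/3}$ for a constant $c=c(\para)>0$. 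On the other hand, the monotone coupling of Step 1 bounds $E^\rho_{n,t}(\sigma_0^+)$ by $E^\para_{n,t}(\sigma_0^+)\le\delta n^{2/3}$, and the left-tail estimate of Step 2 bounds $E^\rho_{n,t}(\sigma_0^-)$ in terms of the variance $\Vvv(\log Z_n^\para(t))\le\delta n^{2/3}$ at the larger parameter $\para$ plus an $O(b^2n^{2/3})$ correction, so $E^\rho_{n,t}\abs{\sigma_0}\le C(\delta+b^2)n^{2/3}$. Fixing first $b$ small enough that $Cb<c/2$, then $\varepsilon_0<\tfrac12 b\,\abs{\trigamf'(\para)}$, and finally any $\delta<cb/(2C)$, we reach the contradiction $c\,b\,n^{2/3}\le E^\rho_{n,t}\abs{\sigma_0}\le(c/2)\,b\,n^{2/3}$; therefore $\Vvv(\log Z_n^\para(t))\ge c_1n^{2/3}$. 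Together with Step 3 this proves the theorem.
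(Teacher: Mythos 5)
Your Step 1 is exactly the paper's reduction (Theorem \ref{varthm} and its remark: $\Vvv[\log Z_n^\para(t)]=E^\para_{n,t}\abs{\sigma_0}$, $E^\para_{n,t}(\sigma_0)=t-n\trigamf(\para)$), and your Step 3 sketches, in deferred form, what the paper actually does for the upper bound (tilt the boundary parameter by $\pm bu/n$, Chebyshev, the variance-comparison Lemma \ref{4.1lem}, then a self-bounding absorption as in Lemmas \ref{lem_sigtail}, \ref{ubldlemma} and \eqref{sigmamom}). Two caveats there: you explicitly leave the key tail estimates of Step 2 unproved, and your stated tilt directions are reversed --- to control $Q^\rho(\sigma_0\ge u)$ one must tilt \emph{up} ($\lambda>\rho$), and for the left tail \emph{down}, as in the paper's proof of Lemma \ref{lem_sigtail}.

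The genuine gap is Step 4. The inequality you invoke there, $E^\rho_{n,t}\abs{\sigma_0}\le C(\delta+b^2)n^{2/3}$ for $\rho=\para-bn^{-1/3}$, is not merely unproven; it is false, independently of the contradiction hypothesis $\Vvv(\log Z_n^\para(t))\le\delta n^{2/3}$. Indeed $E^\rho_{n,t}(\sigma_0)=t-n\trigamf(\rho)$ is an exact identity, and since $\trigamf$ is strictly decreasing, under your reduction $\abs{t-n\trigamf(\para)}\le\varepsilon_0 n^{2/3}$ one has $E^\rho_{n,t}(\sigma_0^-)\ge n\trigamf(\rho)-t\ge\bigl(b\abs{\trigamf'(\para)}-\varepsilon_0\bigr)n^{2/3}-Cb^2n^{1/3}$, which is of order $bn^{2/3}$ for every large $n$ no matter what $\Vvv(\log Z_n^\para(t))$ is. So the ``contradiction'' you reach is a contradiction between your hypothesized left-tail transfer estimate and this identity, and it yields no information about the variance at $\para$. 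Structurally, the soft tools you allow yourself (exponential-family monotonicity in the parameter, Lemma \ref{4.1lem}-type comparisons) transfer variance information across a shift of size $bn^{-1/3}$ only up to an error $n\abs{\trigamf(\rho)-\trigamf(\para)}\asymp bn^{2/3}$, which is exactly the size of the quantity you are trying to bound, so no choice of $b,\delta,\varepsilon_0$ can close the loop; moreover tail bounds of the Step 2 type only give \emph{upper} bounds on $E\abs{\sigma_0}$, never the lower bound you need. The paper's lower bound requires a different mechanism: Proposition \ref{LBprop1} shows that the quenched mass $Q^\para_{n,t}(\abs{\sigma_0}\le\delta n^{2/3})$ cannot be close to $1$ with non-negligible probability, and its proof uses the comparison inequalities \eqref{comp1}--\eqref{comp2} against the boundary-free partition function $Z_{1,n}$, the reversal/dual measure of Theorem \ref{revthm} and Lemma \ref{LEMrev}, the independence furnished by the Burke property (Theorem \ref{burkethm}), and small-deviation estimates for exponential functionals of Brownian motion; the variance lower bound then follows from $\Vvv(\log Z_n^\para(t))=E^\para_{n,t}\abs{\sigma_0}\ge\delta n^{2/3}P^\para_{n,t}(\abs{\sigma_0}\ge\delta n^{2/3})$. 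You would need to supply an argument of this kind; the perturbation-and-contradiction scheme as written cannot work.
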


The second theorem says that under the annealed distribution 
  the path stays close to the diagonal of the rectangle $[0,n]\times[0,t]$, and 
  $n^{2/3}$ is the correct order of typical fluctuations.   

\begin{theorem}\label{path-thm1} 
Assume \eqref{assA}   and   let $0\le \gamma<1$.   Then for
large enough $n$ and $b$ 
 \be \label{path11}
P^\para_{n,t}(\abs{\sigma_{\lfloor \gamma n \rfloor }-\gamma t}>b n^{2/3})\le C b^{-3}.
\ee 
 For any  $\varepsilon>0$ there exists $\delta>0$ such that  
\be\label{path21} 
\varlimsup_{n\to \infty}P^\para_{n,t}(|\sigma_{\lfloor \gamma n \rfloor }-\gamma t|\le \delta n^{2/3})\le \varepsilon.
\ee 
%}
\end{theorem}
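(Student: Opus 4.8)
The plan is to reduce the interior jump time to an exit-point problem and then run the change-of-parameter coupling that underlies the variance bounds. Fix $m=\lfloor\gamma n\rfloor$ and cut the polymer at level $m$: directly from \eqref{Qdef2} the law of $\sigma_m$ under $Q^{\para,\om}_{n,t}$ factors as
\[
Q^{\para,\om}_{n,t}(\sigma_m\in ds)=\frac{Z_m^{\para,\om}(s)\,Z^{\om}_{m+1,n}(s,t)}{Z_n^{\para,\om}(t)}\,ds,
\]
a stationary boundary partition function of length $m$ times a boundary-free one of type \eqref{Zdef1.1}. By the Burke-type property (Theorem~\ref{burkethm}) the recentered process $s\mapsto\log Z_m^{\para}(s)$ is again a Brownian motion with drift $\para$, independent of $B_{m+1},\dots,B_n$, so $\sigma_m$ under $P^\para_{n,t}$ has the same distribution as the boundary exit time $\sigma_0$ under $P^\para_{n-m,t}$. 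With $\ell=n-m$ and $\sigma^{\ast}:=t-\ell\trigamf(\para)$, assumption \eqref{assA} and $m\trigamf(\para)=\gamma n\trigamf(\para)+O(1)$ give $|\sigma^{\ast}-\gamma t|\le Cn^{2/3}$; since $\sigma^{\ast}\approx m\trigamf(\para)$, the two factors above are each essentially on the characteristic direction near $s=\sigma^{\ast}$, so Theorem~\ref{varbdthm} is available for them (for the boundary-free factor after the standard sandwiching between stationary partition functions with parameters $\para\pm\rho$, $\rho\asymp n^{-1/3}$). It therefore suffices to show that $\sigma_0$ for this length-$\ell$ stationary polymer concentrates on scale $n^{2/3}$ about $\sigma^{\ast}$; for $\gamma=0$ it is the polymer in the hypothesis and $\sigma^{\ast}=O(n^{2/3})$.

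For the upper bound \eqref{path11} I would estimate $P(\sigma_0\ge\sigma^{\ast}+bn^{2/3})$ (the other tail is symmetric) by a coupling in the spirit of \cite{cato-groe-06,bala-cato-sepp}. Writing $f$ for the free energy density of \eqref{Zdef1.1}, a one-line computation from \eqref{Zlim5} gives $f'=\trigamf^{-1}$, so $f$ is concave and $\sigma^{\ast}$ is precisely where $s\mapsto\para s+\ell f\bigl((t-s)/\ell\bigr)$ is maximized, with second-order behaviour governed by $|f''(\trigamf(\para))|/\ell$. Shifting $\para$ to $\para_+=\para+\rho$ with $\rho\asymp b\,n^{-1/3}$ moves this maximizer by $\ell|\trigamf'(\para)|\rho\asymp b\,n^{2/3}$; the long initial stretch of boundary Brownian motion is shared by the $\para$- and $\para_+$-polymers and cancels in the comparison, while the event $\{\sigma_0\ge\sigma^{\ast}+bn^{2/3}\}$ forces $\log Z^{\para}_{\ell,t}$ to beat $\log Z^{\para_+}_{\ell,t}$ by the curvature amount $\asymp(bn^{2/3})^2/\ell\asymp b^2 n^{1/3}$, over and above the deterministic gap between their means read off from the mean formula in \eqref{meanvar}. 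A Chebyshev bound against the $O(n^{2/3})$ variance estimate of Theorem~\ref{varbdthm} then makes this probability decay fast enough in $b$ to give \eqref{path11}.

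For the matching lower bound \eqref{path21} I would use the variance identity \eqref{meanvar} together with the variance \emph{lower} bound of Theorem~\ref{varbdthm}. Taking $\gamma=0$ for concreteness, \eqref{meanvar} and \eqref{assA} force $E^{\para}_{n,t}(\sigma_0^{+})\ge c\,n^{2/3}$, while differentiating $\E\log Z_n^{\para}(t)=-n\digamf(\para)+\para t$ in $\para$ gives $E^{\para}_{n,t}(\sigma_0)=t-n\trigamf(\para)=O(n^{2/3})$, hence $E^{\para}_{n,t}(\sigma_0^{-})\ge c\,n^{2/3}$ as well: the exit point genuinely straddles $0$ at scale $n^{2/3}$ on both sides. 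Combined with the tail estimate \eqref{path11} this already shows $P(|\sigma_0|\le\delta n^{2/3})$ is bounded away from $1$; to push it below a prescribed level one invokes the quantitative form of the variance lower bound (the same circle of ideas that yields Theorem~\ref{varbdthm}), which controls not merely $E(\sigma_0^{\pm})$ but the amount of mass $\sigma_0$ must place at scale $n^{2/3}$. For general $\gamma$ these conclusions transfer through the distributional identity $\sigma_m\eqd\sigma_0$ from the first step.

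The two places I expect real work are: (i) implementing the change-of-parameter coupling cleanly — pinning down the exact inequality that turns $\{\sigma_0\ge\sigma^{\ast}+bn^{2/3}\}$ into a free-energy gap of the right order, and verifying that the macroscopic boundary stretch truly drops out so that only essentially on-characteristic partition functions (to which Theorem~\ref{varbdthm} applies) enter the Chebyshev step; and (ii) the lower bound \eqref{path21}, where the first-moment information $E(\sigma_0^{\pm})\gtrsim n^{2/3}$ is by itself consistent with $\sigma_0$ being microscopic with probability close to $1$, so one genuinely needs the cube-root lower bound on $\Vvv(\log Z_n^{\para}(t))$ — the deepest input of the paper — together with a converse linking the spread of the exit point to the size of that variance.
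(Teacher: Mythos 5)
Your reduction of the interior jump time to an exit-time problem is correct, and it is a genuinely different route from the paper's: you factor the quenched law of $\sigma_m$ at level $m=\lfloor\gamma n\rfloor$ as $Z_m^\para(s)Z_{m+1,n}(s,t)/Z_n^\para(t)$, cancel $Z_m^\para(0)$, and use the Burke property (Theorem \ref{burkethm}, via \eqref{Zincr2}) to replace $s\mapsto \para s-Y_m(0,s)$ by a fresh boundary Brownian motion independent of $B_{m+1},\dots,B_n$; the paper instead proves the same identity (its \eqref{eq:path1}) through the dual measure $Q^*_n$ of Lemma \ref{LEMrev} and an explicit integration of the last jump time. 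Your argument is arguably more transparent. For the tail bound \eqref{path11}, after the reduction the paper simply quotes its Proposition \ref{sigmaprop} (estimate \eqref{sigmatail}), noting only that \eqref{assA} survives with constant $A+1$ for the shorter polymer; your change-of-parameter/Chebyshev sketch essentially re-derives that proposition (it is the argument of Lemma \ref{lem_sigtail}). That is workable, but if you insist on quoting Theorem \ref{varbdthm} for the tilted partition function with $\para_+=\para+\rho$, $\rho\asymp bn^{-1/3}$, note that the pair $(t,\ell)$ is on-characteristic for $\para_+$ only with a constant $A'\asymp b$ in \eqref{assA}, so you need the explicit linear $A$-dependence of the variance upper bound (Theorem \ref{UBthm}), not just a fixed-$A$ statement; with that bookkeeping the $b^{-3}$ decay comes out.

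The genuine gap is in \eqref{path21}. The inputs you propose --- the identity \eqref{var1}, the lower bound of Theorem \ref{varbdthm}, and the upper tail \eqref{path11} --- only yield first-moment information, namely $E^\para_{n,t}\abs{\sigma_0}\ge C_1n^{2/3}$ together with $P(\abs{\sigma_0}>bn^{2/3})\le Cb^{-3}$. This is consistent with, say, $P(\abs{\sigma_0}\le \delta n^{2/3})=1/2$ for every $\delta>0$ (an atom near the characteristic point plus a fixed fraction of mass spread at scale $n^{2/3}$), so no argument from these facts alone can produce ``for every $\varepsilon$ there is $\delta$'' as required by \eqref{path21}; it only shows the small-ball probability is bounded away from $1$, which you acknowledge. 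The missing ingredient is an anticoncentration estimate for $\sigma_0$ at scale $\delta n^{2/3}$, which in the paper is exactly Proposition \ref{LBprop1} and its two-sided corollary \eqref{LBprop1bound3}; its proof is a substantial separate argument (the comparison lemma, the reversed polymer, boundary parameters tilted by $\pm a(\delta)n^{-1/3}$, and a Brownian small-deviation bound), and in the paper the logical flow is from this estimate to the variance lower bound via $\Vvv[\log Z_n^\para(t)]=E^\para_{n,t}\abs{\sigma_0}$, not the reverse. So the ``converse linking the spread of the exit point to the size of the variance'' that you invoke does not exist as such; either cite Proposition \ref{LBprop1} after your reduction (which is precisely the paper's proof of \eqref{path21}) or supply an argument of comparable depth.
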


Variable  $\sigma_0$ is a special case of Theorem \ref{path-thm1}, but in fact
controlling this case turns out to be the key to {\sl both} Theorems  \ref{varbdthm} and
 \ref{path-thm1}.  Let us also mention that  our arguments give the 
 moment bound $E^\para_{n,t}(\abs{\sigma_0}^p)\le Cn^{2p/3}$ for $1\le p<3$.

\section{Properties of the model with boundary}

Following O'Connell and Yor \cite{oconn-yor-01} 
introduce the following processes to render the role of boundary conditions 
clearer in \eqref{Zdef2}: 
   $\uu_0(t)=B(t)$, and then define inductively for 
$k\in\bN$ 
\be\begin{aligned}
  r_k(t)&=\log\int_{-\infty}^t e^{\uu_{k-1}(s,t)-\para(t-s)+B_k(s,t)}\,ds
\end{aligned}\label{defrk}\ee
and 
\bea\begin{aligned}  
\uu_k(t)&= \uu_{k-1}(t)+r_k(0)-r_k(t),\\
X_k(t)&=B_{k}(t)+r_k(0)-r_k(t).
\end{aligned}\label{defYk}\eea
From the definitions {and a simple induction argument} follow the equations  
\be\begin{aligned} 
\sum_{k=1}^n r_k(t)
&= \log \int_{-\infty<s_{0}<s_1\dotsm<s_{n-1}<t}  \exp\bigl[ B(s_0,t)-\para(t- s_0) \\[7pt]
 &\qquad \qquad +  B_1(s_0,s_1) 
+B_{2}(s_1,s_{2}) +\dotsm + B_n(s_{n-1},t)\bigr] \,ds_{0,n-1}\\[4pt]
&= B(t)-\para t +\log Z_{n}^\para(t) 
\end{aligned} \label{rB1}\ee
and 
\be\begin{aligned} 
Z_{n}^\para(t) =  \int_0^t   \exp[- B(s)+\para s] Z_{1,n}(s,t) \,ds
%\\  &\qquad\qquad 
\;+ \; \sum_{j=1}^n  \Bigl( \;\prod_{k=1}^j e^{r_k(0)} \Bigr) Z_{j,n}(0,t). 
\end{aligned}\label{ZB2}\ee
If we define  also 
\be  Z_{0}^\para(t)= \exp[ -B(t)+\para t]  \label{Z0}\ee
then the above can  be expressed in the form
\be\begin{aligned} 
Z_{n}^\para(t) =  \int_0^t  Z_{0}^\para(s) Z_{1,n}(s,t) \, ds
%\\  &\qquad\qquad 
\; + \; \sum_{j=1}^n  Z_{j}^\para(0) Z_{j,n}(0,t). 
\end{aligned}\label{ZB3}\ee
Combining \eqref{rB1} with \eqref{defYk}  gives the space and time increments 
\be 
\log Z_{n}^\para(t)- \log Z_{n-1}^\para(t) = r_n(t)  \qquad
 \text{for $n\in\bN,\, t\in\bR_+$}  \label{Zincr1}\ee
and
\be 
\log Z_{n}^\para(t)- \log Z_{n}^\para(s) = \para(t-s)-\uu_n(s,t)  \qquad
 \text{for $n\in\bZ_+,\, s<t\in\bR_+$} .  \label{Zincr2}\ee

\begin{remark}\label{remark1}
The density of $(\sigma_0,\dots, \sigma_{n-1})$ under $Q_{n,t}$ can also be written as
\be
\frac1{\widehat Z_n^\para(t)}
\exp\bigl[
\widehat B(s_0,t)+\widehat B_1(s_0,s_1)+\widehat B_2(s_1,s_2)+\dots+\widehat B_n(s_{n-1},t)
\bigr]\ind\{s_0<\dotsm<s_{n-1}<t\}
\label{altQ}\ee
where $\widehat B(u)=B(u)-\para u/2$ (and similarly for $\widehat B_k$) 
and $\widehat Z_n^\para(t)=Z_n^\para(t) \exp(B(t)-\para t)$. From this representation
and the stationarity of Brownian increments it is clear that 
\be
%Q_{n,t}(A)\eqd Q_{0,n}(\tau_t A)
E^{Q_{n,t}}f(\sigma_0,\sigma_1,\dotsc,\sigma_{n-1}) \eqd 
E^{Q_{n,0}}f(t+\sigma_0,t+\sigma_1,\dotsc,t+\sigma_{n-1}).
\label{altQ2} \ee
%where $\tau_t$ is the shift by $-t$.
\end{remark}

\subsection{Burke property}

%\rem{I added a separate lemma here and changed a couple of things.}

The following lemma summarizes the Burke property of O'Connell and Yor  \cite[Thm.~5]{oconn-yor-01}.
% together with a result of Dufresne \cite{dufr-osaka01}.
\begin{lemma}[\cite{oconn-yor-01}]\label{lemburke} Let $B$ and $C$ be independent standard Brownian motions indexed by $\bR$ and $\para>0$ a fixed constant. For $t\in\bR$ set 
\begin{align*}
&r(t)=\log\int_{-\infty}^t \exp\left(B(s,t)+C(s,t)-\para(t-s)\right)ds,&\\
&f(t)=B(t)+r(0)-r(t), \quad g(t)=C(t)+r(0)-r(t).&
\end{align*}
Then $f$ and $g$ are independent standard Brownian motions indexed by $\bR$ and  for each $t\ge0$  the processes $\{(f(s),g(s)):s\le t\}$ and $\{r(s):s\ge t\}$ are independent. Moreover the following identity holds almost surely:
\be\label{r_ident}
r(t)=\log\int^{\infty}_t \exp\left(f(t,s)+g(t,s)+\para(t-s)\right)ds.
\ee
\end{lemma}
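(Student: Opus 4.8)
The plan is to read the construction as a single-server Brownian queue: $B$ is the stream of arrivals, $C$ the stream of potential service, $Q:=e^{r}$ the queue content, $f$ the departure stream, $g$ the stream of unused service. All three assertions will then be read off from the reversibility of the queue-content diffusion together with the classical time-reversal identity for one-dimensional diffusions.

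First I would verify that $Q(t)=e^{r(t)}=\int_{-\infty}^{t}e^{(B+C)(s,t)-\para(t-s)}\,ds$ is an It\^o process. Writing it as the product $e^{(B+C)(t)-\para t}\int_{-\infty}^{t}e^{-(B+C)(s)+\para s}\,ds$ of a geometric-Brownian factor and a finite-variation factor — the integral converging at $-\infty$ precisely because $\para>0$ — the product rule and It\^o's formula yield
\be
dr(t)=\bigl(e^{-r(t)}-\para\bigr)\,dt+dB(t)+dC(t).
\nn\ee
Hence $r$ is an autonomous one-dimensional diffusion, and the $\int_{-\infty}^{t}$-construction makes it stationary; its invariant density is proportional to $e^{-e^{-x}-\para x}$, so $e^{-r}$ has the $\Gamma(\para)$ distribution (Dufresne's identity) and the drift is the logarithmic derivative of the invariant density — that is, $r$ is time-reversible. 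I also record that $r$ and $f+g=(B+C)+2\bigl(r(0)-r\bigr)$ are functions of $W:=(B+C)/\sqrt{2}$ alone, while $f-g=B-C$ is a $\sqrt{2}$-scaled standard Brownian motion independent of $W$.

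The core step is to fix $t$ and time-reverse the diffusion about $t$. Put $\check r(u):=r(t-u)$; this is again a stationary solution of the same SDE, and its driving noise $\check W$, defined by $\sqrt{2}\,\check W(u):=\check r(u)-\check r(0)-\int_{0}^{u}\bigl(e^{-\check r(v)}-\para\bigr)\,dv$, is — by reversibility of $r$ and the time-reversal theorem for one-dimensional diffusions — a standard Brownian motion. Substituting the SDE for $r$ into this definition and changing variables in the integral rearranges it to
\be
\sqrt{2}\,\check W(u)=(f+g)(t)-(f+g)(t-u),
\nn\ee
so the reversed increments of $f+g$ on $(-\infty,t]$ are exactly $\sqrt{2}$ times the driving Brownian motion of $\check r$ on $[0,\infty)$. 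I expect this identity to be the main obstacle: it is the point where the particular algebraic form of the definitions must conspire with reversibility, and it is what makes the past of $f+g$ independent of $r(t)$ — hence of the entire future of $r$ — a fact that is invisible in the forward filtration, where $r(t)$ is a function of the past of $W$.

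Granting the identity, the remaining work is bookkeeping. The SDE has, for any given driving Brownian motion $\beta$, a unique stationary solution (the modulus of the difference of two solutions is non-increasing in time), namely the explicit process $R_{\beta}(t):=\log\int_{-\infty}^{t}e^{\sqrt{2}(\beta(t)-\beta(s))-\para(t-s)}\,ds$, which solves the SDE by the computation above; applied to $\check r$ with its noise $\check W$ this gives $\check r=R_{\check W}$. Translating $\check r(u)=r(t-u)$ and $\check W=\bigl((f+g)-(f+g)(0)\bigr)/\sqrt2$ back to forward time turns the integral over $(-\infty,u]$ in $R_{\check W}$ into an integral over $[t,\infty)$ in $f$ and $g$, which is precisely \eqref{r_ident}. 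The identity $\check r=R_{\check W}$ also exhibits $\check r$ on $(-\infty,0]$, i.e.\ $\{r(s):s\ge t\}$, as a function of the increments of $\check W$ on $(-\infty,0]$ alone; since $\check W$ is a Brownian motion these are independent of its increments on $[0,\infty)$, which by the crux identity are (up to the factor $\sqrt2$) the reversed increments of $f+g$ on $(-\infty,t]$ — so these last are a $\sqrt2$-scaled Brownian motion independent of $\{r(s):s\ge t\}$. Finally $f-g=B-C$ is a $\sqrt2$-scaled Brownian motion independent of everything $W$-measurable, hence of $f+g$ and $r$; combining the two parts through $f=\tfrac12((f+g)+(f-g))$ and $g=\tfrac12((f+g)-(f-g))$ with the normalization $f(0)=g(0)=0$ shows $f$ and $g$ are independent standard Brownian motions on $\bR$, and — using $t\ge0$, so that every value $(f\pm g)(s)$ with $s\le t$ is an increment over a subinterval of $(-\infty,t]$ — that $\{(f(s),g(s)):s\le t\}$ is independent of $\{r(s):s\ge t\}$.
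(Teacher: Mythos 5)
The paper itself does not prove this lemma: it is quoted from \cite{oconn-yor-01} (Theorem 5 there), with the remark that \eqref{r_ident} is contained in that proof. So there is no in-paper argument to compare with, and your proposal has to be judged on its own; on its own it is correct, and it is essentially a reconstruction of the classical reversibility proof of Burke's theorem in the Brownian setting, which is also the spirit of the original source. The main ingredients all check out: the It\^o computation giving $dr(t)=(e^{-r(t)}-\para)\,dt+dB(t)+dC(t)$; the identification of the invariant law via Dufresne (so $e^{-r(t)}\sim\mathrm{Gamma}(\para,1)$) and of the drift as $(\log\rho)'$ with $\rho(x)\propto e^{-e^{-x}-\para x}$, whence the stationary diffusion $r$ is reversible; the algebraic crux identity $\sqrt2\,\check W(u)=(f+g)(t)-(f+g)(t-u)$, which I verified and which is exactly right; and the final bookkeeping with $f-g=B-C$ independent of $B+C$, including the correct observation of where the hypothesis $t\ge0$ is used. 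Note also that for the statement ``$\check W$ is a standard Brownian motion'' you do not even need a general time-reversal theorem: $\check W$ is the same path functional of $\check r$ as $W=(B+C)/\sqrt2$ is of $r$, and $\check r\eqd r$ as processes on $\bR$ by stationarity plus reversibility, so $\check W\eqd W$, which is all you use later (in particular independence of its increments on the two half-lines).

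The one step you should tighten is the identification $\check r=R_{\check W}$. The parenthetical ``the modulus of the difference of two solutions is non-increasing'' gives a contraction, not equality: a nonzero limiting difference must still be excluded. A quick fix within your framework: with $D=\check r-R_{\check W}$, the process $|D(u)|$ is non-increasing and is a shift-covariant functional of the stationary process $\check r$ (since the increments of $\check W$, and hence $R_{\check W}$, are such functionals), so it is stationary and therefore a.s.\ constant in $u$; a constant nonzero $D$ contradicts $D'=e^{-\check r}-e^{-R_{\check W}}\neq0$. Alternatively, and more simply, \eqref{r_ident} (equivalently $\check r=R_{\check W}$) is a purely pathwise identity requiring no uniqueness argument at all: setting $N(s)=e^{(B+C)(s)-\para s}$ and $I(s)=\int_{-\infty}^{s}N(v)^{-1}dv$, one has $e^{r(s)}=N(s)I(s)$, the integrand in \eqref{r_ident} equals $e^{2r(t)}N(t)^{-1}\cdot\bigl(N(s)I(s)^{2}\bigr)^{-1}=e^{2r(t)}N(t)^{-1}\tfrac{d}{ds}\bigl(-1/I(s)\bigr)$, and since $I(\infty)=\infty$ a.s.\ the integral over $[t,\infty)$ is $e^{2r(t)}/(N(t)I(t))=e^{r(t)}$. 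With that substitution the only probabilistic input in your proof is the reversibility step that makes $\check W$ a Brownian motion, and the argument is complete.
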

Although   identity (\ref{r_ident}) is not stated explicitly in Theorem 5 of \cite{oconn-yor-01} it is 
contained in the proof.
In the next theorem we generalize the Burke property in a form suitable for our use.   

\begin{theorem}\label{burkethm}
Let $n\in\bN$ and  $0\le s_n\le s_{n-1}\le\dots\le s_1<\infty$. 
Then over the index $j$ 
 the following random variables and processes are all mutually independent:
 \begin{align*}
& r_j(s_j) \ \text{ and } \    \{X_j(s): s\le s_j\} \ \text{ for } \    1\le j\le n, \quad  \{Y_n(s): s\le s_n\}, \quad \\
& \qquad\qquad\qquad\qquad\qquad   \text{and} \quad
\{Y_j(s_{j+1},s): s_{j+1}\le s\le s_j\} \ \text{ for } \    1\le j\le n-1.
 \end{align*}
Furthermore,  the $X_j$ and $Y_j$  processes are standard Brownian motions,  and $r_j(s_j)\overset{d}=-\log \eta$ with $\eta\sim Gamma(\para,1)$. 
\end{theorem}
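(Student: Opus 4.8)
The plan is to bootstrap from Lemma~\ref{lemburke} by induction on $n$, peeling off one Brownian motion at a time. The base case $n=1$ is essentially Lemma~\ref{lemburke} applied with $C=B_1$: it gives that $X_1=f$ and $Y_1:=Y_0+r_1(0)-r_1(\cdot)=g$ are independent standard Brownian motions, that $r_1(t)\eqd-\log\eta_1$ with $\eta_1\sim\Gamma(\para,1)$ (using Dufresne's identity, which identifies the exponential functional $\int_{-\infty}^t e^{\cdot}$ with the reciprocal of a gamma variable; this is where the $Gamma(\para,1)$ enters), and that $\{(f(s),g(s)):s\le t\}\perp\{r_1(s):s\ge t\}$. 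Evaluating at $t=s_1$ yields mutual independence of $r_1(s_1)$, $\{X_1(s):s\le s_1\}$, and $\{Y_1(s):s\le s_1\}$, which is the $n=1$ statement.

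For the inductive step, suppose the claim holds for $n-1$. The key structural fact is that the recursion \eqref{defrk}--\eqref{defYk} has a self-similar form: $(Y_{k-1},B_k)\mapsto(Y_k,X_k,r_k)$ is exactly the map of Lemma~\ref{lemburke} with $(B,C)=(Y_{k-1},B_k)$, $r=r_k$, $f=X_k$, $g=Y_k$. First I would apply the induction hypothesis to the system built from $Y_1,B_2,\dots,B_n$ with the shifted levels $s_2\ge s_3\ge\dots\ge s_n$ (note $s_2\le s_1$), but I have to be careful: the induction hypothesis is about the system whose input is a standard Brownian motion, so I should instead argue as follows. By the $n=1$ case, $X_1\!\restriction_{(-\infty,s_1]}$, $r_1(s_1)$, and $Y_1\!\restriction_{(-\infty,s_1]}$ are mutually independent, $Y_1$ is a standard BM, and — crucially — $\{Y_1(s):s\le s_2\}\cup\{Y_1(s_2,s):s_2\le s\le s_1\}$ together with $r_1(s_1)$ and $X_1\!\restriction_{(-\infty,s_1]}$ remain a valid decomposition because the pieces of $Y_1$ over disjoint time intervals are independent. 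Now $Y_1\!\restriction_{(-\infty,s_2]}$ is a standard BM independent of $B_2,\dots,B_n$ (those were never touched), so the induction hypothesis applies to the $(n-1)$-level system driven by $Y_1,B_2,\dots,B_n$ at levels $s_2\ge\dots\ge s_n$: it outputs that $r_j(s_j)$ ($2\le j\le n$), $\{X_j(s):s\le s_j\}$ ($2\le j\le n$), $\{Y_n(s):s\le s_n\}$, and $\{Y_j(s_{j+1},s):s_{j+1}\le s\le s_j\}$ ($2\le j\le n-1$) are mutually independent with the stated laws.

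It remains to splice the two independence statements together. The outputs of the $(n-1)$-system are measurable functions of $\bigl(Y_1\!\restriction_{(-\infty,s_2]},\,B_2,\dots,B_n\bigr)$, which by the $n=1$ conclusion is independent of the pair $\bigl(r_1(s_1),\,X_1\!\restriction_{(-\infty,s_1]}\bigr)$; and the remaining piece $\{Y_1(s_2,s):s_2\le s\le s_1\}$ is independent of everything else since Brownian increments over $[s_2,s_1]$ are independent of the path up to $s_2$ and of $B_2,\dots,B_n$. Combining, all the listed random variables and processes over $1\le j\le n$ are mutually independent, which closes the induction. The main obstacle is purely bookkeeping: one must track carefully which Brownian motions and which time intervals feed into which outputs, and verify that the nested conditioning is legitimate — in particular that the ``future'' piece $r_j(s_j)$ from each application of Lemma~\ref{lemburke} is independent of the ``past'' $\sigma$-algebra that the next application will use. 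No new analytic input beyond Lemma~\ref{lemburke} (hence beyond \cite{oconn-yor-01}, which already contains Dufresne's identity implicitly) is needed.
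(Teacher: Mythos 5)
Your proposal is correct and takes essentially the same route as the paper: induction on $n$, with Lemma \ref{lemburke} plus Dufresne's identity supplying the base case and the marginal laws, and independence of Brownian increments used to split the $Y$-process at the levels $s_j$. The only (inessential) difference is the direction of the peeling: the paper applies the induction hypothesis to levels $1,\dots,n-1$ of the original system and handles the new level $n$ (built from $Y_{n-1}$ and $B_n$) with one application of the lemma, whereas you strip off level $1$ and, using the self-similarity of the recursion, apply the hypothesis to the $(n-1)$-level system driven by $Y_1,B_2,\dots,B_n$.
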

\begin{proof}
We use induction on $n$.   For $n=1$ the statement follows from Lemma \ref{lemburke} together with Dufresne's identity \cite[Cor.~4]{dufr-osaka01} which gives the distribution of $r_1(t)$.

%Theorem 5 of \cite{oconn-yor-01} 
%gives that $Y_1, X_1$ are independent standard Brownian motions and the
%independence of $\{(X_1(s),Y_1(s)): s\le t\}$ and 
%$\{ r_1(u): u\ge t\}$. Dufresne's identity \cite[Cor.~4]{dufr-osaka01} gives the distribution of $r_1(t)$. 

We now assume that the statement is true for $n-1$ and prove it for $n$. From definitions \eqref{defrk}--\eqref{defYk},  
$\{r_{n}(s), Y_{n}(s), X_n(s): s\le s_n\}$ is a function of 
 $\{Y_{n-1}(s): s\le s_{n}\}$ and an
independent Brownian motion $B_{n}$. This means that $\{r_{n}(s), Y_{n}(s), X_n(s): s\le s_n\}$ is independent of  the processes $\{X_j(s): s\le s_j\}$, $\{Y_j(s_{j+1},s): s_{j+1}\le  s\le s_j\}$
for  $1\le j\le n-1$, and random variables $\{r_j(s_j): 1\le j \le n-1\}$.
 Besides the induction hypothesis we also used that $\{Y_{n-1}(s_{n},s), s\ge s_{n}\}$ is independent of  $\{Y_{n-1}(s): s\le s_{n}\}$. That $\{ Y_{n}(s), X_n(s): s\le s_n\}$ and $ r_{n}(s_n)$ have the right joint distribution again follows from Lemma \ref{lemburke} and Dufresne's identity.
\end{proof}
Applying the shift $\shift_uf(t)=f(u+t)-f(u)$ that preserves
the distributions of the  Brownian motions 
shows that  $r_k(t)=r_k(0)\circ\shift_t$ is a stationary process. {We also note that $\bE (r_k(t))=-\digamf(\para)$ and $\Vvv (r_k(t))=\trigamf(\para)$.}
\subsection{Reversal}

Fix $T\in\bR$ and  $n\in \bN$.
% and on the interval  $[0,T]$ consider the processes $\{(Y_{k}: 0\le k\le n;\, X_j, B_{j}, r_j: \, 1\le j\le n  )\}$. 
Define the following new processes:
\bea 
\label{defY*} Y_j^*(s)&=&Y_{n-j}(T)-Y_{n-j}(T-s), \qquad 0\le j\le n,\\
\label{defB*}B_j^*(s)&=&X_{n+1-j}(T)-X_{n+1-j}(T-s),\qquad 1\le j\le n,\\
\label{defr*}r_j^*(s)&=&r_{n+1-j}(T-s),\qquad 1\le j\le n,\\
\label{defX*}X_j^*(s)&=&B_{n+1-j}(T)-B_{n+1-j}(T-s),\qquad 1\le j\le n.
\eea 
Define the {\sl dual   environment}  by  $\om^*=(Y^*_0, B^*_j: 1\le j\le n)$.  
\begin{theorem}\label{revthm}  Fix $n\in\bN$.  
We have the following equality in distribution for the  processes on $\bR$:
\be \begin{aligned} 
&\left\{Y_{k}: 0\le k\le n; \, X_j, B_{j}, r_j: \, 1\le j\le n \right\}  \eqd 
\left\{Y_{k}^*: 0\le k\le n; \, X_j^*, B_j^*, r_j^*: \, 1\le j\le n \right\}.
\end{aligned} \notag\ee 
In particular, the dual environment $\om^*=(Y^*_0, B^*_j: 1\le j\le n)$ 
has the same distribution as the original environment $\om=(B, B_j: 1\le j\le n)$.
\end{theorem}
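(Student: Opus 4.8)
The plan is to prove the distributional identity by induction on $n$, using Theorem \ref{burkethm} as the structural input and Lemma \ref{lemburke} as the base-case engine. The key observation is that the starred processes are obtained from the original ones by a single deterministic operation: time-reversal about $T$ combined with the index flip $j\mapsto n+1-j$ (and $k\mapsto n-k$ for the $Y$'s). Since Brownian motion is invariant under $\{B(s)\}\eqd\{B(T)-B(T-s)\}$, each individual starred process is again a standard Brownian motion; the real content is that the \emph{joint} law of the whole family is preserved, i.e. that the recursive relations \eqref{defrk}--\eqref{defYk} that tie the original processes together are \emph{the same} relations, read in the reversed time and flipped index, that tie the starred processes together.

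First I would record, using Theorem \ref{burkethm} applied at a decreasing sequence of times (or in its stationary form), the precise independence/Markov structure: the increments of $Y_k$ to the future of a level-$k$ jump time are independent of the past, the $X_j$ are independent standard Brownian motions, and the $r_j(\cdot)$ are stationary with the stated Gamma law. Then I would verify the base case $n=1$ directly: here $Y_0=B$, $X_1, B_1, r_1$ are governed by \eqref{defrk}--\eqref{defYk} with $k=1$, and I must check that $Y_0^*, X_1^*, B_1^*, r_1^*$ — which are just the time-reversals about $T$ of $Y_0, B_1, B_1^{\text{(unused)}}$... more precisely $Y_0^*(s)=Y_1(T)-Y_1(T-s)$, $B_1^*(s)=X_1(T)-X_1(T-s)$, etc. — satisfy the \emph{same} system. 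This is exactly the content of the reversed identity \eqref{r_ident} in Lemma \ref{lemburke}: that lemma says the triple $(f,g,r)$ built forward from $(B,C)$ can equally be built backward, which is precisely the $n=1$ instance of the reversal after matching up $f\leftrightarrow Y_1$ (i.e. the output level process), $g\leftrightarrow X_1$, and the reversed $r$.

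For the induction step I would peel off one level. The original environment on levels $1,\dots,n$ is the concatenation of the level-$1$ data (built from $Y_0=B$ and $B_1$ via Lemma \ref{lemburke}, producing $Y_1, X_1, r_1$) with the level-$2,\dots,n$ data (built from $Y_1$ and $B_2,\dots,B_n$). Dually, the starred environment peels at the \emph{other} end: its ``level-$1$'' data in the new indexing corresponds to the original level-$n$ data. By Lemma \ref{lemburke} applied to the pair $(Y_{n-1}, B_n)$, the reversed versions of $Y_n, X_n, r_n$ together with $Y_{n-1}$ reconstituted backward satisfy the level-$1$ starred recursion; and the remaining starred data on levels $2,\dots,n$ is, by the induction hypothesis applied to the $(n-1)$-level sub-environment built from $Y_1,B_2,\dots,B_n$, equal in distribution to that sub-environment's reversal. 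Splicing these two pieces along the shared process $Y_{n-1}$ (resp.\ its reversal $Y_1^*$) — and here the independence statement of Theorem \ref{burkethm} guarantees that the splicing is along an independent interface, so the joint law factors correctly — yields the full identity for $n$.

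The main obstacle I expect is bookkeeping rather than conceptual: getting the index correspondences exactly right (which starred process is the reversal of which original process, and in particular that $X_j^* = $ reversed $B_{n+1-j}$ while $B_j^* = $ reversed $X_{n+1-j}$, reflecting that Lemma \ref{lemburke} swaps the roles of ``input noise'' and ``output queue process'' under reversal), and confirming that the level-peeling decomposition is genuinely compatible from both ends — i.e.\ that building levels $1,\dots,n$ left-to-right and building the starred levels (which is levels $n,\dots,1$ right-to-left in the original) are consistent because each single-level step is the self-dual operation of Lemma \ref{lemburke}. Once the $n=1$ self-duality is pinned down and the independence interface from Theorem \ref{burkethm} is invoked to justify the splicing, the induction closes. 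The final sentence of the theorem is then immediate: restricting the established equality in distribution to the sub-family $(Y_0, B_1,\dots,B_n)$ on the left and $(Y_0^*, B_1^*,\dots,B_n^*)$ on the right gives $\om\eqd\om^*$.
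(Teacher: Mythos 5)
Your ingredients are the right ones, and your base case is in effect the whole proof; the genuine problem is the induction/splicing superstructure. First, the plain induction hypothesis --- that the $(n-1)$-level family agrees in law with its dual --- is too weak to carry out the splice: to glue the peeled block to the remaining block you need the \emph{conditional} law of the peeled starred block given the shared interface process, and an unconditional equality in distribution of the complementary block does not identify that conditional law. What the splice actually requires is the stronger statement that the starred driving processes $(Y_0^*,B_1^*,\dots,B_n^*)$ are independent standard Brownian motions and that the starred family is the \emph{same deterministic recursion} applied to them; only with that can you say that on the dual side the peeled level is ``fresh independent noise attached at the interface,'' i.e.\ that the one-step Burke kernel reappears. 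Your phrase ``the splicing is along an independent interface'' does not supply this: the interface is a shared process, and what is needed is conditional independence of the two blocks given it \emph{plus} identification of the conditional kernel on the dual side. Second, the bookkeeping is inconsistent as written: the dual of the sub-environment $(Y_1,B_2,\dots,B_n)$ is the block of starred levels $1,\dots,n-1$ together with $Y_0^*$ (not levels $2,\dots,n$), and its interface with the peeled piece is the reversal of $Y_1$, namely $Y_{n-1}^*$; whereas if you peel starred level $1$ via Lemma \ref{lemburke} applied to $(Y_{n-1},B_n)$, as you state, then the complementary block is the dual of $(Y_0,B_1,\dots,B_{n-1})$ and the induction hypothesis must be applied to \emph{that} sub-environment, with interface $Y_1^*$.

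Once the hypothesis is strengthened to what the splice needs, the induction becomes superfluous, and this is exactly how the paper argues, directly and without induction: identity \eqref{r_ident}, rewritten as \eqref{rXY}, shows algebraically that the starred processes satisfy \eqref{eqn*}, which are precisely the recursions \eqref{defrk}--\eqref{defYk} driven by the inputs $(Y_0^*,B_1^*,\dots,B_n^*)$ --- your observation that $B_j^*$ is reversed $X_{n+1-j}$ while $X_j^*$ is reversed $B_{n+1-j}$ is exactly this input/output swap --- and Theorem \ref{burkethm} (whose proof already contains the level-by-level induction you are attempting to redo) says that $Y_n,X_1,\dots,X_n$ are independent standard Brownian motions, so the starred inputs have the same joint law as $(B,B_1,\dots,B_n)$ and equality in law of the two families follows from feeding identically distributed inputs through the same map. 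I suggest either adopting this direct argument, or, if you insist on the induction, carrying the stronger statement (``starred inputs are independent Brownian motions and the starred family solves the same recursion'') as the inductive hypothesis, after which the conditional identification at the interface is immediate.
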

\begin{proof}
{By (\ref{r_ident}) from Lemma \ref{lemburke}}
%From the proof of Theorem 5 of \cite{oconn-yor-01}  
\be \label{rXY} 
r_k(t)=\log \int_{t}^{\infty} \exp(Y_k(t,s)+\para(t-s)+X_k(t,s)) ds.
\ee 
Definitions (\ref{defY*})--(\ref{defX*}) together with (\ref{defYk}) and 
\eqref{rXY}  give the identities
\be \begin{aligned} 
r_k^*(t)&=\log\int_{-\infty}^t \exp(Y^*_{k-1}(s,t)-\para(t-s)+B_k^*(s,t))\,ds,\\
Y^*_k(t)&= Y^*_{k-1}(t)+r^*_k(0)-r^*_k(t),\\[3pt]
X^*_k(t)&=B_k^*(t)+r^*_k(0)-r^*_k(t). 
\end{aligned}  \label{eqn*}\ee 
  Theorem \ref{burkethm} 
  tells us that  $\{Y_n, X_j, 1\le j\le n\}$  are independent Brownian motions on $(-\infty, t]$ 
for any $t>0$, hence over all of $\bR$.   Consequently by \eqref{defY*}--\eqref{defB*} processes 
   $\{Y_0^*, B_j^*, 1\le j\le n\}$  have the same distribution as $\{Y_0, B_j, 1\le j\le n\}$.
The theorem follows because  \eqref{eqn*} defines the same recursions as  \eqref{defrk}--\eqref{defYk}.
\end{proof}

We define the dual quenched measure $Q^*_n$   on non-decreasing cadlag paths $x:\bR_+\to \{0,1,\dots,n\}$ with $x(0)=0$ and $x(\infty)=n$. These paths can be represented by    jump times 
$0<\sigma^*_1<\dotsm<\sigma^*_{n}$ defined by $x(\sigma^*_j-)=j-1<j=x(\sigma^*_j)$.  The dual measure   $Q^*_n$  is defined by 
 \be\label{Qstar}\begin{aligned} 
&E^{Q^*_{n}}\bigl[f(\sigma^*_1,\sigma^*_2,\dotsc,\sigma^*_{n})\bigr]  =\frac1{Z_n^*} 
\int_{0<s_1<\dotsm<s_n<\infty}  f(s_1, s_2,\dotsc, s_n)   \\[4pt]
&\qquad \times\exp\left[ X_1(0,s_1)+X_2(s_1,s_2)+\dotsm+ X_n(s_{n-1},s_{n})+Y_n(0,s_{n})-\para s_{n}\right]
\,d{s_{1,n}}.  
\end{aligned}\ee
Because of the shifting described in \eqref{altQ2} we only need 
the measure $Q^*_{n}$ instead of a family indexed also by $t$. 
%Define the function $T_t$ on increasing positive $n$-tuples as 
%\[  T_t(s_0,s_1,\dots, s_{n-1})=(t-s_{n-1}, t-s_{n-2},\dots,t-s_{0}).  \]
 The dual measure is naturally connected  with the representation of  $Q_{n,t}$ given
 in Remark \ref{remark1}.   The    next lemma is proved by a straightforward  change of
 variables in the integrals.  {Recall the definition of $\widehat Z_n^\para(t)$ from (\ref{altQ}).}
\begin{lemma}\label{LEMrev}  Fix $t\in\bR$ and $n\in\bN$  and for each
$\om$  define the dual environment $\om^*$ 
with $T=t$.  
Then  $
Z_n^{*,\om} =  \widehat Z_{n}^{\para, \om^*}(t), 
$
and 
\[  E^{Q^{*,\om}_{n}}\bigl[f(\sigma^*_1,\sigma^*_2,\dotsc,\sigma^*_{n})  \bigr]
=    E^{Q_{n,t}^{\om^*}}\bigl[ f(t-\sigma_{n-1},t-\sigma_{n-2},\dotsc,t-\sigma_{0})\bigr].    \]  
Consequently, by Theorem \ref{revthm}, 
\[
 E^{Q^{*}_{n}}\bigl[f(\sigma^*_1,\sigma^*_2,\dotsc,\sigma^*_{n}) \bigr]
   \eqd   E^{Q_{n,t}}\bigl[f(t-\sigma_{n-1},t-\sigma_{n-2},\dotsc,t-\sigma_{0})\bigr].  
\]
 \end{lemma}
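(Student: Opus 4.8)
\emph{Proof proposal.}
The statement is purely a change of variables in the integral \eqref{Qstar}, so the plan is: (i) perform the substitution that reverses the order and reflects time about $t$; (ii) rewrite the resulting integrand and match it against the density representation \eqref{altQ} of $Q^{\om^*}_{n,t}$; (iii) pass from the quenched identity to the distributional one using $\om^*\eqd\om$ from Theorem \ref{revthm}.

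First I would substitute, in \eqref{Qstar}, $s_i=t-u_{n-i}$ for $1\le i\le n$, equivalently $u_j=t-s_{n-j}$ for $0\le j\le n-1$. This is a bijection of the simplex $\{0<s_1<\dotsm<s_n<\infty\}$ onto $\{-\infty<u_0<\dotsm<u_{n-1}<t\}$ with Jacobian of absolute value $1$, so $ds_{1,n}=du_{0,n-1}$ and the test function transforms as $f(s_1,\dotsc,s_n)=f(t-u_{n-1},\dotsc,t-u_0)$, which is exactly the reversal appearing in the lemma. Next I would rewrite the exponent of \eqref{Qstar}. Using the definitions \eqref{defY*}, \eqref{defB*}, \eqref{defX*} with $T=t$, together with the elementary facts $X_k(0)=B_k(0)=Y_k(0)=0$ (so in particular $B^*_n(t)=X_1(t)$ and $Y^*_0(t)=Y_n(t)$), a term-by-term computation gives $X_i(s_{i-1},s_i)=B^*_{n+1-i}(u_{n-i},u_{n-i+1})$ for $2\le i\le n$, which after reindexing $k=n+1-i$ is $\sum_{k=1}^{n-1}B^*_k(u_{k-1},u_k)$, while the two boundary contributions combine as $X_1(0,s_1)+Y_n(0,s_n)-\para s_n=B^*_n(u_{n-1},t)+Y^*_0(u_0,t)-\para(t-u_0)$. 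Hence the whole exponent becomes $Y^*_0(u_0,t)+\sum_{k=1}^{n-1}B^*_k(u_{k-1},u_k)+B^*_n(u_{n-1},t)-\para(t-u_0)$.

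Now I would recognize this transformed integral. Writing \eqref{Zdef2} in the dual environment $\om^*=(Y^*_0,B^*_j:1\le j\le n)$ and multiplying the integrand by $\exp(Y^*_0(t)-\para t)$ — which is precisely the relation between $\widehat Z^\para_n$ and $Z^\para_n$ recorded in Remark \ref{remark1} — produces exactly the integrand just obtained, so taking $f\equiv 1$ gives $Z^{*,\om}_n=\widehat Z^{\para,\om^*}_n(t)$. Keeping a general bounded measurable $f$, the transformed integral divided by $\widehat Z^{\para,\om^*}_n(t)$ is, by the density representation \eqref{altQ} of $Q^{\om^*}_{n,t}$, equal to $E^{Q^{\om^*}_{n,t}}[f(t-\sigma_{n-1},\dotsc,t-\sigma_0)]$; this is the second displayed identity.

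Finally, for the last assertion I would invoke the distributional equality $\om^*\eqd\om$ from Theorem \ref{revthm}: since $\om\mapsto\om^*$ preserves $\bP$, the random variable $\om\mapsto E^{Q^{\om^*}_{n,t}}[f(t-\sigma_{n-1},\dotsc,t-\sigma_0)]$ has the same distribution as $\om\mapsto E^{Q^{\om}_{n,t}}[f(t-\sigma_{n-1},\dotsc,t-\sigma_0)]$, and combining this with the quenched identity just established yields $E^{Q^*_n}[f(\sigma^*_1,\dotsc,\sigma^*_n)]\eqd E^{Q_{n,t}}[f(t-\sigma_{n-1},\dotsc,t-\sigma_0)]$. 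There is no real obstacle here; the only step that needs care is the index bookkeeping in the substitution, in particular correctly converting the two boundary terms $X_1(0,s_1)$ and $Y_n(0,s_n)-\para s_n$ — whose Brownian parts are anchored at $0$ rather than at some jump time $u_j$ — into the reversed-interval increments $B^*_n(u_{n-1},t)$ and $Y^*_0(u_0,t)-\para(t-u_0)$.
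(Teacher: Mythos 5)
Your proof is correct and is exactly the argument the paper intends: the paper dismisses the lemma as ``a straightforward change of variables in the integrals,'' and your substitution $u_j=t-s_{n-j}$, the identification of the transformed exponent with the $\widehat B$-density of Remark \ref{remark1} in the dual environment, and the final appeal to $\om^*\eqd\om$ from Theorem \ref{revthm} carry that out accurately, including the index bookkeeping for the boundary terms.
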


\subsection{Variance identity}

\begin{theorem} For $\para>0$ and  $(t,n)\in(0,\infty)\times\bN$
\be\begin{aligned}
 \Vvv[\log Z_n^\para(t)] &= n\trigamf(\para)-t+2 E^\para_{n,t}(\sigma_0^+)\\
&= -n\trigamf(\para)+t+2 E^\para_{n,t}(\sigma_0^-). 
\end{aligned}\label{var1}\ee
\label{varthm}\end{theorem}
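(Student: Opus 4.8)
The plan is to prove the first equality in \eqref{var1}; the second then follows from it together with the identity $E^\para_{n,t}(\sigma_0)=t-n\trigamf(\para)$. First I would rewrite the telescoping relation \eqref{rB1} as $\log Z_n^\para(t)=V-B(t)+\para t$, where $V:=\sum_{k=1}^n r_k(t)$. Theorem \ref{burkethm} applied with $s_1=\dots=s_n=t$ makes $r_1(t),\dots,r_n(t)$ i.i.d., and by the remark following that theorem each has variance $\trigamf(\para)$, so $\Vvv(V)=n\trigamf(\para)$; also $\Vvv(B(t))=t$ since $B(0)=0$. As $\para t$ is deterministic, $\Vvv(\log Z_n^\para(t))=\Vvv(V-B(t))=n\trigamf(\para)+t-2\Cvv(V,B(t))$, and writing $V=\log Z_n^\para(t)+B(t)-\para t$ once more gives $\Cvv(V,B(t))=\Cvv(\log Z_n^\para(t),B(t))+t$; hence
\[
\Vvv(\log Z_n^\para(t))=n\trigamf(\para)-t-2\,\Cvv\bigl(\log Z_n^\para(t),B(t)\bigr).
\]
All the variables above lie in $L^2$, so these covariances are finite, and it remains to prove $\Cvv(\log Z_n^\para(t),B(t))=-E^\para_{n,t}(\sigma_0^+)$.

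This last identity --- equivalently, $\bE[B(t)\log Z_n^\para(t)]=-E^\para_{n,t}(\sigma_0^+)$, since $\bE B(t)=0$ --- is the crux. I would obtain it by shifting the drift of the boundary Brownian motion $B$ on the interval $[0,t]$. Let $h$ be the continuous function with $h(0)=0$ and $h'=\ind_{[0,t]}$, so that $h(s)=\min(s^+,t)$, and let $\om_\e$ be the environment obtained from $\om=(B,B_1,\dots,B_n)$ by replacing $B$ with $B+\e h$. Since $B$ enters \eqref{Zdef2} only through the boundary factor $\exp(-B(s_0)+\para s_0)$ and $0\le h\le t$, the map $\e\mapsto\log Z_n^{\para,\om_\e}(t)$ is differentiable with difference quotients bounded by $t$, and
\[
\frac{d}{d\e}\Big|_{\e=0}\log Z_n^{\para,\om_\e}(t)=-E^{Q^\para_{n,t}}\bigl(h(\sigma_0)\bigr)=-E^{Q^\para_{n,t}}(\sigma_0^+),
\]
the last equality because $\sigma_0<t$ almost surely forces $h(\sigma_0)=\sigma_0^+$. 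On the other hand, the Cameron--Martin theorem gives $\bE[\Phi(\om_\e)]=\bE[\Phi(\om)\,e^{\e B(t)-\e^2 t/2}]$ for suitable $\Phi$; taking $\Phi=\log Z_n^\para(t)$ and differentiating at $\e=0$ --- the interchanges with $\bE$ being legitimate by the above bound and by the fact that $\log Z_n^\para(t)$ has moments of all orders --- yields $-E^\para_{n,t}(\sigma_0^+)$ on one side and $\bE[B(t)\log Z_n^\para(t)]=\Cvv(\log Z_n^\para(t),B(t))$ on the other. Combined with the displayed formula above, this gives $\Vvv(\log Z_n^\para(t))=n\trigamf(\para)-t+2E^\para_{n,t}(\sigma_0^+)$, the first equality in \eqref{var1}; in particular $E^\para_{n,t}(\sigma_0^+)<\infty$.

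For the second equality it suffices to show $E^\para_{n,t}(\sigma_0)=t-n\trigamf(\para)$, because then $E^\para_{n,t}(\sigma_0^+)=E^\para_{n,t}(\sigma_0^-)+t-n\trigamf(\para)$ converts the first identity into the second. The mean formula $\bE[\log Z_n^\para(t)]=-n\digamf(\para)+\para t$ is immediate from \eqref{rB1} using $\bE r_k(t)=-\digamf(\para)$ and $\bE B(t)=0$; and since $\para$ enters \eqref{Zdef2} only through the factor $e^{\para s_0}$, we have $\partial_\para\log Z_n^\para(t)=E^{Q^\para_{n,t}}(\sigma_0)$ pathwise. Interchanging $\partial_\para$ with $\bE$ --- a routine domination, e.g. via $E^{Q^\para_{n,t}}(\sigma_0^+)\le t$ and a uniform exponential tail bound for $\sigma_0^-$ --- then gives $E^\para_{n,t}(\sigma_0)=\partial_\para\bE[\log Z_n^\para(t)]=-n\digamf'(\para)+t=-n\trigamf(\para)+t$.

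The one genuine obstacle is the covariance identity $\Cvv(\log Z_n^\para(t),B(t))=-E^\para_{n,t}(\sigma_0^+)$ of the second paragraph: the drift perturbation has to be set up so that the truncated coordinate $\sigma_0^+$ --- rather than the signed exit time $\sigma_0$ --- emerges, which is why one perturbs only over $[0,t]$ and invokes $\sigma_0<t$, and one must justify the differentiation under the expectation against the Girsanov density. Everything else is routine bookkeeping with the Burke property (Theorem \ref{burkethm}) and the telescoping relation \eqref{rB1}.
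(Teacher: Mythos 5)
Your argument is correct, and it reaches the result by a route that differs in its mechanics from the paper's. The paper decomposes $\log Z_n^\para(t)$ into the west and north increments, uses the Burke property to get $\Vvv(\west)=n\trigamf(\para)$, $\Vvv(\north)=t$ and the independence of $\east$ and $\north$, and then identifies the remaining covariance $\Cvv(\south,\north)$ by splitting the boundary weight into two parameters (\eqref{Z5}), observing that $\bE(\north\mid\south=x)$ is $\para$-free, and differentiating the Gaussian density of $\south$ in $\para$; the second line of \eqref{var1} is obtained by repeating this with the west parameter $\paraa$, and $E^\para_{n,t}(\sigma_0)=t-n\trigamf(\para)$ is only a corollary. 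You instead start from the telescoping identity \eqref{rB1} (so effectively the east--south decomposition), which reduces everything to $\Cvv(\log Z_n^\para(t),B(t))$, and you compute that covariance by a Cameron--Martin tilt of $B$ by the truncated drift $h(s)=\min(s^+,t)$: the pathwise derivative produces exactly $-E^{Q^\para_{n,t}}(\sigma_0^+)$ because $h(\sigma_0)=\sigma_0^+$, while the Girsanov density produces $\Cvv(\log Z_n^\para(t),B(t))$. This is the same underlying idea as the paper's conditional-mean/density-differentiation trick (a drift derivative restricted to $[0,t]$ is what isolates $\sigma_0^+$), but your implementation is arguably cleaner and more transparent, at the cost of having to justify two interchanges of derivative and expectation; both are genuinely available, the first with the constant bound $t$ on the difference quotients and the second using the finite exponential-type moments of $\log Z_n^\para(t)=\sum_k r_k(t)-B(t)+\para t$. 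Your derivation of the second line also inverts the paper's logic: you prove $E^\para_{n,t}(\sigma_0)=t-n\trigamf(\para)$ first by differentiating $\bE\log Z_n^\para(t)=-n\digamf(\para)+\para t$ in $\para$ and then convert the first identity into the second, which is perfectly legitimate. The only place where your justification is a bit thin is precisely that interchange $\partial_\para\bE\log Z_n^\para(t)=\bE\,\partial_\para\log Z_n^\para(t)$: a uniform-in-$\para$ integrable bound on the quenched quantity $E^{Q^\lambda_{n,t}}(\sigma_0^-)$ is not completely immediate, and the cleanest fix is to note that $\lambda\mapsto\log Z_n^\lambda(t)$ is convex (its second $\lambda$-derivative is the quenched variance of $\sigma_0$), so monotonicity of difference quotients lets you pass the derivative inside the expectation since the annealed mean is smooth in $\lambda$. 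With that remark supplied, the proof is complete.
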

\begin{remark}
Adding the two equations gives $ \Vvv[\log Z_n^\para(t)] =E^\para_{n,t}|\sigma_0|$ while subtracting them yields $E^\para_{n,t} (\sigma_0)=t-n\trigamf(\para)$.
\end{remark}
\begin{proof}[Proof of Theorem \ref{varthm}]  While we keep $\para$ fixed we simplify the notation
to $Z_n(t)=Z_n^\para(t)$. 

The proof begins the same way as the proof of 
Theorem 3.7 in \cite{sepp-poly}. (The idea was
originally learned from the proof of Theorem 2.1 in \cite{cato-groe-06}.)
Abbreviate the increments with reference to compass directions:
\begin{align*}
\north &=\log Z_{n}(t)-\log Z_{n}(0), %=\para t-Y_n(t),
\quad \south =\log Z_{0}(t)=\para t-B(t),\\
\quad \east &=\log Z_{n}(t)-\log Z_{0}(t),  %=\sum_{j=1}^n r_j(t),
\quad \west =\log Z_{n}(0)=\sum_{j=1}^n r_j(0). 
\end{align*}
The south and west increments are given by the boundary conditions
as indicated above, while the east and {north} increments are computed 
from \eqref{ZB2}. 
Then 
\be\begin{aligned}
\Vvv\bigl[\log Z_{n}(t)\bigr]&=\Vvv(\west +\north )=\Vvv(\west )+\Vvv(\north )+2\Cvv(\west ,\north )\\
&=\Vvv(\west )+\Vvv(\north )+2\Cvv(\south +\east -\north ,\north )\\
 &= \Vvv(\west )-\Vvv(\north )+2\Cvv(\south ,\north )\\
&=n\trigamf(\para)-t+2\Cvv(\south ,\north ).\\
\end{aligned}\label{aux3}\ee
{The third line comes from the independence of $\east $ and $\north $ while the  last equality came %from the independence of $\east $ and $\north $
 from (\ref{Zincr2}) and Theorem \ref{burkethm}.} Substituting $\north$ instead of
$\west$ in the covariance in the second line of (\ref{aux3}) leads to the formula 
\be
\Vvv\bigl[\log Z_{n}(t)\bigr] =  -n\trigamf(\para)+t
+2\Cvv(\east,\west ).
\label{aux3.1}\ee
Equations \eqref{aux3} and \eqref{aux3.1} give the two lines of 
\eqref{var1} once we evaluate the covariances.  
We begin with \eqref{aux3}. 

We need to vary separately the parameters of the boundary conditions
on the $x$ and $y$ axes.  So we rename the parameter on the
$y$-axis as $\paraa$, and rewrite the partition function with
boundaries 
as follows:
\be\begin{aligned} Z_{n}(t) &= 
\int_{0<s_{0}<s_1\dotsm<s_{n-1}<t}  \exp\bigl[ -B(s_0)+\para s_0 \\[7pt]
 &\qquad\qquad   +  B_1(s_0,s_1) 
+B_{2}(s_1,s_{2}) +\dotsm + B_n(s_{n-1},t)\bigr] \,ds_{0,n-1}\\
&
+ 
\int_{-\infty<s_{0}<s_1\dotsm<s_{n-1}<t} \ind\{s_0<0\} 
 \exp\bigl[ -B(s_0)+\paraa s_0 \\[7pt]
 &\qquad \qquad +  B_1(s_0,s_1) 
+B_{2}(s_1,s_{2}) +\dotsm + B_n(s_{n-1},t)\bigr] \,ds_{0,n-1}.
\end{aligned}\label{Z5}\ee

Next we argue that 
\be \text{$\bE(\north\vert\south=x)$ does not
depend on $\para$.}  \label{aux3.8}\ee
%Since $C[0,t]$ is a Polish space, there exists a 
% conditional distribution $\mu(x, dh)$ of the path 
%$h(s)=\log Z_0(s)=\para sB(s)$,
%$0\le s\le t$,  
%given $h(t)=x$. 
%Let $\delta=t/m$ and  $s_i=i\delta$, $0\le i\le m$. The conditional 
%density $f(h_1,\dotsc, h_{m-1} \vert h_m)$  of 
%$(h(s_1), \dotsc, h(s_{m-1}))$, given  
%$h(s_m)$, is 
%\[  f(h_{1},\dotsc, h_{m-1}\vert h_m) = 
%\frac{\sqrt{2\pi t}}{(2\pi\delta)^{m/2}}  
%\exp\Bigl\{ \frac{h_m^2}{2t} -\frac1{2\delta}\sum_{j=1}^m(h_j-h_{j-1})^2\Bigr\}. \]
%{\bf *** Not sure, needs doublechecking ***}
% Let $h_m$ be the step function $h_m(s)=h(\delta\fl{s/\delta})$.  
%Take a bounded continuous test function $\varphi$ on $C[0,t]$.  The
% form of the density above implies that 
%$\int \varphi(h_m)\,\mu(x,dh)$ is independent of $\para$, and
%as $m\to\infty$ this converges to $\int \varphi(h)\,\mu(x,dh)$. 
%
%\noindent \textbf{*** Another proof ***}
Indeed, if we condition $\{h(s)=\para s-B(s), 0\le s\le t\}$ on the event $h(t)=x$ then this (as a process) has the same distribution as $\para s-\tilde B_s$ where $\tilde B_s$ is a BM conditioned to hit $y=\para t-x$ at $t$. It is known that the latter has the same distribution as $B_s-sB_t/t  +{sy}/{t}$ which means that the conditional distribution of $\{h(s), 0\le s\le t\}$ is the same as
\[
\para s-\left(B_s- \frac{sB_t}{t}+\frac{s(\para t-x)}{t}\right)=\frac{s}{t} B_t-B_s+\frac{sx}{t}
\]
which does not depend on $\para$. 

The density of $\south$ is 
$f_\para(x)=(2\pi t)^{-1/2} \exp(-\frac1{2t}(x-\para t)^2)$. 
Utilizing \eqref{aux3.8},  
\be\begin{aligned}
\frac{\partial}{\partial\para} \E(\north )
&=\frac{\partial}{\partial\para} \int_\bR \E(\north \,\vert\, \south =x) f_\para(x)\,dx
=\int_\bR \E(\north \,\vert\, \south =x)  
\frac{\partial   f_\para(x)}{\partial\para}\,dx\\
&=\int_\bR \E(\north \,\vert\, \south =x) ( x-\para t)f_\para(x)\,dx\\
&=\E(\north \south )- \E(\north )\E(\south) =
\Cvv(\north ,\south ). 
\end{aligned}\label{aux4}\ee
On the other hand, utilizing \eqref{Z5},
\be\begin{aligned}
&\frac{\partial}{\partial\para} \E(\north )
= \E\Bigl[\; \frac{\partial}{\partial\para} \log Z_n(t)\Bigr] \\
&=\E\Bigl[\; \frac1{ Z_n(t)} \int_{0<s_{0}<s_1\dotsm<s_{n-1}<t} 
s_0\, \exp\bigl\{ -B(s_0)+\para s_0 \\[7pt]
 &\qquad\qquad   +  B_1(s_0,s_1) 
+B_{2}(s_1,s_{2}) +\dotsm + B_n(s_{n-1},t)\bigr\} \,ds_{0,n-1}\Bigr]\\
&=E(\sigma_0^+). 
\end{aligned}\label{aux5}\ee
Combining \eqref{aux4}, \eqref{aux4} and \eqref{aux5} gives
the first line of  \eqref{var1}.

Proof of the second line of \eqref{var1} proceeds analogously. 
$\bE(\east\,\vert\,\west=x)$ does not
depend on $\paraa$, and so the analogue of computation 
\eqref{aux4} gives $\partial_\paraa \E(\east) = -\Cvv(\east, \west)$. 
Then from  \eqref{Z5},  $\partial_\paraa \E(\east) 
=\E(\partial_\paraa \log Z_n(t))
=E(\sigma_0\ind\{\sigma_0<0\})$. 
\end{proof}
 
\subsection{Comparison lemma}

We find it useful here  to augment the family $Z_{j,k}(s,t)$ defined
for $j\ge 1$  by \eqref{Zdef1.1}
by introducing   $Z_{0,k}(t)=Z_{0,k}(0,t)$. These will be defined not exactly consistently 
with \eqref{Zdef1.1}, but in a manner that gives us inequalities between ratios of 
partition functions.  
For $k\in \bN$ and $t\in\bR_+$ define 
\be  Z_{0,0}(t)=e^{-B(t)} \label{Zdef1.01} \ee
and 
 \be\begin{aligned} Z_{0,k}(t) &= 
\int\limits_{0<s_{0}<\dotsm<s_{k-1}<t}  \exp\bigl[ -B(s_0) +B_1(s_0,s_1) 
\\[4pt]   &\qquad \qquad \qquad \qquad  
+B_{2}(s_1,s_{2})
+\dotsm + B_k(s_{k-1},t)\bigr] \,ds_{0,k-1}.
\end{aligned}\label{Zdef1.02}\ee
 
For $n\in\bN$ and events $D$ on the paths 
 we write $Z_{n,t}^\para(D)=Z_{n,t}^\para Q_{n,t}^\para(D)$ for the unnormalized
quenched measure.  It is also convenient  to set, for $A\subseteq\bR$,  
\be Z_{0,t}^\para(\sigma_0\in A)= \ind_{A\cap\bR_+}(t) \exp[-B(t)+\para t].  \label{0conv}\ee 

\begin{lemma}  Let $\para>0$.  For $0< s<t$ and $n\in\bZ_+$
\be
\frac{Z_{n+1,t}^\para(\sigma_0>0)}{Z_{n,t}^\para(\sigma_0>0)}
\le \frac{Z_{0,n+1}(t)}{Z_{0, n}(t)}\le \frac{Z_{n+1,t}^\para(\sigma_0<0)}{Z_{n,t}^\para(\sigma_0<0)}
\label{comp1}\ee
and 
\be
\frac{Z_{n,t}^\para(\sigma_0>0)}{Z_{n,s}^\para(\sigma_0>0)}
\ge \frac{Z_{0,n}(t)}{Z_{0, n}(s)}\ge \frac{Z_{n,t}^\para(\sigma_0<0)}{Z_{n,s}^\para(\sigma_0<0)}. 
\label{comp2}\ee
The second inequality of \eqref{comp2} makes sense only for $n\ge 1$. 
\end{lemma}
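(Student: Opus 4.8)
The plan is to reduce all four inequalities, by cross‑multiplying, to monotonicity statements for two ratios of partition functions, and to prove those statements by induction on $n$ based on conditioning on the last jump of the polymer path.

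Write $P_n(t)=Z_{n,t}^\para(\sigma_0>0)$, $V_n(t)=Z_{n,t}^\para(\sigma_0<0)$ and $R_n(t)=Z_{0,n}(t)$. By \eqref{ZB2}--\eqref{ZB3} the two summands on the right of \eqref{ZB2} are precisely $P_n(t)$ and $V_n(t)$, so $P_n(t)=\int_0^t e^{-B(s)+\para s}Z_{1,n}(s,t)\,ds$ and $V_n(t)=\sum_{j=1}^n a_j Z_{j,n}(0,t)$ with $a_j=Z_j^\para(0)>0$. Put $\Pi_n(t)=P_n(t)/R_n(t)$ and $\Xi_n(t)=V_n(t)/R_n(t)$. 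Cross‑multiplying, \eqref{comp1} says that $n\mapsto\Pi_n(t)$ is non‑increasing and $n\mapsto\Xi_n(t)$ is non‑decreasing, and \eqref{comp2} says that $t\mapsto\Pi_n(t)$ is non‑decreasing and $t\mapsto\Xi_n(t)$ is non‑increasing; the endpoint cases $n=0$ are trivial since $V_0\equiv 0$ by \eqref{0conv}. So it suffices to prove these four monotonicities.

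Conditioning on the time $v\in(0,t)$ of the last jump (level $n-1$ to level $n$), and using $Z_{j,n}(a,b)=\int_a^b Z_{j,n-1}(a,v)e^{B_n(v,b)}\,dv$ for $n>j$ together with $Z_{n,n}(a,b)=e^{B_n(a,b)}$, gives for $n\ge 1$
\[
P_n(t)=\int_0^t P_{n-1}(v)e^{B_n(v,t)}\,dv,\qquad R_n(t)=\int_0^t R_{n-1}(v)e^{B_n(v,t)}\,dv,
\]
\[
V_n(t)=\int_0^t V_{n-1}(v)e^{B_n(v,t)}\,dv+a_n e^{B_n(0,t)},
\]
with $P_0(t)=e^{-B(t)+\para t}$, $R_0(t)=e^{-B(t)}$, $V_0\equiv 0$; the term $a_n e^{B_n(0,t)}$ is the contribution of paths already at level $n$ at time $0$. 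The only probabilistic input needed is the elementary fact that for a fixed positive locally integrable $\varphi$ the probability measures $\omega_t(dv)\propto\varphi(v)\ind_{(0,t)}(v)\,dv$ are stochastically increasing in $t$. Factoring $e^{B_n(v,t)}=e^{B_n(t)}e^{-B_n(v)}$ so that $e^{B_n(t)}$ cancels in ratios, one gets $\bigl(\int_0^t g_1 e^{B_n(v,t)}dv\bigr)/\bigl(\int_0^t g_2 e^{B_n(v,t)}dv\bigr)=E^{\omega_t}[g_1/g_2]$ with $\omega_t\propto g_2\,e^{-B_n}\ind_{(0,t)}$; consequently $E^{\omega_t}[h]\le h(t)$ and $t\mapsto E^{\omega_t}[h]$ is non‑decreasing whenever $h$ is non‑decreasing (and the reversed inequalities when $h$ is non‑increasing).

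Now I would run two inductions on $n$. First, $t\mapsto\Pi_n(t)$ is non‑decreasing for every $n$: the base case $\Pi_0(t)=e^{\para t}$ works precisely because $\para>0$, and the step writes $\Pi_n(t)=E^{\omega_t}[\Pi_{n-1}]$ with $\omega_t\propto R_{n-1}e^{-B_n}\ind_{(0,t)}$, so the induction hypothesis forces $\Pi_n$ non‑decreasing; once this is known, $\Pi_{n+1}(t)=E^{\omega_t}[\Pi_n]\le\Pi_n(t)$ because $\omega_t$ is carried by $(0,t)$, which is the monotonicity in $n$. Second, $t\mapsto\Xi_n(t)$ is non‑increasing for every $n$: the base case $\Xi_0\equiv 0$ is trivial, and the step writes $\Xi_n(t)=E^{\omega_t}[\Xi_{n-1}]+a_n e^{B_n(0,t)}/R_n(t)$, where $a_n e^{B_n(0,t)}/R_n(t)=a_n e^{-B_n(0)}\big/\int_0^t R_{n-1}(v)e^{-B_n(v)}\,dv$ is visibly non‑increasing in $t$, so both summands are non‑increasing by the hypothesis; then $\Xi_{n+1}(t)=E^{\omega_t}[\Xi_n]+(\text{a nonnegative term})\ge\Xi_n(t)$ gives the monotonicity in $n$. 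Undoing the cross‑multiplications proves \eqref{comp1} and \eqref{comp2}. The step I expect to be the main obstacle is pinning down the last‑jump recursions with their degenerate boundary pieces --- in particular the extra term $a_n e^{B_n(0,t)}$ in the equation for $V_n$ --- and then recognizing that this term is itself non‑increasing in $t$, which is exactly what allows the $\Xi_n$ induction to close rather than fail; the remainder is careful bookkeeping of which quantities increase and which decrease.
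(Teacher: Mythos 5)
Your proof is correct, and it is organized differently from the paper's. The paper proves \eqref{comp1}--\eqref{comp2} by a joint induction on $n$ over all four inequalities: the step for \eqref{comp2} uses the decomposition of $Z^\para_{n,\cdot}(\sigma_0\in A)$ over an intermediate time $s$ together with the level-$(n-1)$ inequalities, and then \eqref{comp1} at level $n$ is extracted by a further decomposition and a limiting argument $s\searrow 0$ involving an auxiliary kernel $a(s,t)$ (with a separate boundedness remark for the $\sigma_0<0$ case). You instead pass directly to the $s=0$ ``last-jump'' recursions, recast the four inequalities as monotonicity in $t$ and in $n$ of the two ratios $\Pi_n=P_n/R_n$ and $\Xi_n=V_n/R_n$, prove the $t$-monotonicity by induction through the elementary stochastic-dominance fact that restricting a fixed density to a growing interval $(0,t)$ produces stochastically increasing laws, and then obtain the $n$-monotonicity (i.e.\ \eqref{comp1}) in one line from $E^{\omega_t}[h]\le h(t)$ for non-decreasing $h$ (reversed for $\Xi$), with the boundary contribution $a_ne^{B_n(0,t)}/R_n(t)=a_ne^{-B_n(0)}\big/\int_0^t R_{n-1}e^{-B_n}$ visibly non-increasing. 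Your recursions and the identification of the two summands of \eqref{ZB2} with $Z^\para_{n,t}(\sigma_0>0)$ and $Z^\para_{n,t}(\sigma_0<0)$ check out, as do the edge cases governed by \eqref{0conv}. What your route buys is the elimination of the $s\searrow0$ limit and a cleaner logical structure in which \eqref{comp1} is a corollary of the $t$-monotonicity rather than a separate induction clause; what it costs is only a small technical point you should state explicitly: for $n\ge2$ the function $\Xi_{n-1}$ diverges as $v\searrow0$ (since $R_{n-1}(0)=0$ while $V_{n-1}(0)=a_{n-1}>0$), so the dominance inequality $E^{\omega_s}[\Xi_{n-1}]\ge E^{\omega_t}[\Xi_{n-1}]$ should be justified by truncation and monotone convergence, noting that the expectations are finite because they equal ratios of finite positive partition functions.
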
 
\begin{proof} 
We check the  cases that initialize the inductive proofs.   For \eqref{comp1} 
\begin{align*}
\frac{Z_{1,t}^\para(\sigma_0>0)}{Z_{0,t}^\para(\sigma_0>0)} 
&=\frac{\int_0^t e^{-B(s)+\para s+B_1(s,t)}\,ds}{e^{-B(t)+\para t}}
=\int_0^t e^{B(s,t)+\para (s-t)+B_1(s,t)}\,ds\\
&\le  \int_0^t e^{B(s,t)+B_1(s,t)}\,ds  = \frac{Z_{0,1}(t)}{Z_{0, 0}(t)}  
<\infty = \frac{Z_{1,t}^\para(\sigma_0<0)}{Z_{0,t}^\para(\sigma_0<0)} . 
\end{align*}
For the first part of  \eqref{comp2} 
\begin{align*}
\frac{Z_{0,t}^\para(\sigma_0>0)}{Z_{0,s}^\para(\sigma_0>0)}=e^{-B(s,t)+\para (t-s)}
>e^{-B(s,t)}=  \frac{Z_{0,0}(t)}{Z_{0, 0}(s)}
\end{align*} 
and for the second part (now for $n=1$) 
\begin{align*}
 \frac{Z_{0,1}(t)}{Z_{0, 1}(s)} = e^{B_1(s,t)} 
 +\frac{\int_s^t e^{-B(u)+ B_1(u,t)}\,du}{\int_0^s e^{-B(u)+ B_1(u,s)}\,du} 
 >e^{B_1(s,t)}  =  \frac{Z_{1,t}^\para(\sigma_0<0)}{Z_{1,s}^\para(\sigma_0<0)}.
\end{align*} 

Next the induction steps.  We make use of the decomposition (for $0\le s<t$) 
\[
 {Z_{n,t}^\para(\sigma_0\in A)} 
=
 {Z_{n,s}^\para(\sigma_0\in A)} e^{B_n(s,t)} 
+ \int_s^t   {Z_{n-1,u}^\para(\sigma_0\in A)}  e^{B_n(u,t)} \,du
\]
valid for $n\in\bN$ and for both $A=(-\infty, 0)$ and $A=(0,\infty)$.  For $n=1$ 
convention \eqref{0conv} is used on the right-hand side.  
%A  similar decomposition can be written   for $Z_{0,n}(t)$.  

We begin with  the first inequalities in  \eqref{comp1}--\eqref{comp2}. 
 Assume the first inequality of both  \eqref{comp1} and \eqref{comp2} holds for $n-1$. 
We verify first \eqref{comp2} for $n$. 
\begin{align*}
\frac{Z_{n,t}^\para(\sigma_0>0)}{Z_{n,s}^\para(\sigma_0>0)}
&=  e^{B_n(s,t)} 
+ \int_s^t  \frac{Z_{n-1,u}^\para(\sigma_0>0)}{Z_{n,s}^\para(\sigma_0>0)} \,e^{B_n(u,t)} \,du\\
&=  e^{B_n(s,t)} 
+ \int_s^t  \frac{Z_{n-1,u}^\para(\sigma_0>0)}{Z_{n-1,s}^\para(\sigma_0>0)}
\cdot  \frac{Z_{n-1,s}^\para(\sigma_0>0)}{Z_{n,s}^\para(\sigma_0>0)} \,e^{B_n(u,t)} \,du\\
&\ge   e^{B_n(s,t)} 
+ \int_s^t  \frac{Z_{0,n-1}(u)}{Z_{0,n-1}(s)} 
\cdot  \frac{Z_{0,n-1}(s)}{Z_{0,n}(s)} \,e^{B_n(u,t)} \,du\\
&= \frac{Z_{0,n}(t)}{Z_{0, n}(s)}
\end{align*}
and the inequality came from the induction assumption. 

Deriving the first inequality of  \eqref{comp1} for $n$ requires an extra step.  First decompose and use 
 \eqref{comp2} for $n$ that we just proved:  
\begin{align*}
&\frac{Z_{n+1,t}^\para(\sigma_0>0)}{Z_{n,t}^\para(\sigma_0>0)} 
= \frac{Z_{n+1,s}^\para(\sigma_0>0)}{Z_{n,t}^\para(\sigma_0>0)}\,e^{B_n(s,t)} 
+ \int_s^t  \frac{Z_{n,u}^\para(\sigma_0>0)}{Z_{n,t}^\para(\sigma_0>0)} \,e^{B_n(u,t)} \,du\\[7pt]
&\qquad\qquad= \frac{Z_{n+1,s}^\para(\sigma_0>0)}{Z_{n,s}^\para(\sigma_0>0)} 
\cdot \frac{Z_{n,s}^\para(\sigma_0>0)}{Z_{n,t}^\para(\sigma_0>0)}\,e^{B_n(s,t)} \,
+ \int_s^t  \frac{Z_{n,u}^\para(\sigma_0>0)}{Z_{n,t}^\para(\sigma_0>0)} \,e^{B_n(u,t)} \,du\\
&\qquad\qquad\le  \frac{Z_{n+1,s}^\para(\sigma_0>0)}{Z_{n,s}^\para(\sigma_0>0)} 
\cdot \frac{Z_{0,n}(s)}{Z_{0,n}(t)}\,e^{B_n(s,t)} \,
+ \int_s^t  \frac{Z_{0,n}(u)}{Z_{0,n}(t)}  \,e^{B_n(u,t)} \,du\\
&\qquad\qquad\equiv \frac{Z_{n+1,s}^\para(\sigma_0>0)}{Z_{n,s}^\para(\sigma_0>0)} \,
a(s,t) + \int_s^t a(u,t)\,du. 
\end{align*}
The last equality defines the continuous function $a(s,t)$.  The same steps applied
to the middle member of  \eqref{comp1} gives an equality
\be 
\frac{Z_{0,n+1}(t)}{Z_{0, n}(t)}  = \frac{Z_{0,n+1}(s)}{Z_{0, n}(s)}
\,a(s,t) + \int_s^t a(u,t)\,du \notag\ee
from which 
\be
\frac{Z_{n+1,t}^\para(\sigma_0>0)}{Z_{n,t}^\para(\sigma_0>0)}  -  \frac{Z_{0,n+1}(t)}{Z_{0, n}(t)} 
\; \le \; a(s,t) \biggl( \frac{Z_{n+1,s}^\para(\sigma_0>0)}{Z_{n,s}^\para(\sigma_0>0)} 
- \frac{Z_{0,n+1}(s)}{Z_{0, n}(s)}\biggr).   \label{compaux8}\ee
Each of the three terms on the right-hand side above vanishes as we take $s\searrow 0$.  We have proved that the  first inequalities in  \eqref{comp1}--\eqref{comp2}  hold for $n$. 

To derive the second inequalities in  \eqref{comp1}--\eqref{comp2}   for $n$ 
the induction proofs work the same way: once $(\sigma_0>0)$ has been replaced
by $(\sigma_0<0)$   all inequalities in the above calculations have to be reversed. 
In \eqref{compaux8}  on the right the ratio  $Z_{n+1,s}^\para(\sigma_0<0)/Z_{n,s}^\para(\sigma_0<0)$
does not vanish as $s\searrow 0$ but it stays bounded a.s.~and again $a(0+,t)=0$ takes
the entire right-hand side to $0$.  
\end{proof}
 
\section{Upper bound for the variance }
The parameter $\para\in(0,\infty)$ can be considered  fixed throughout.  The constants
such as 
$C(\para)$ that depend on $\para$ 
and  appear in all our statements are locally bounded functions of $\para$.  {Sometimes we will suppress the dependence on $\para$ and the constants in the intermediate estimates may change from line to line.}
Recall also the key assumption \eqref{assA} on $t>0$ and $n\in\bN$, namely that 
%$t>0$ and $n\in\bN$ satisfy 
$\abs{t-n \trigamf(\para)}\le A n^{2/3} $.  
We develop the upper bounds so that the effect of the constant  $A$ is explicitly present.  

\begin{theorem}\label{UBthm}  Assume \eqref{assA}.  
 Then there exists a constant $C(\para)<\infty$ such that 
\be 
\Vvv (\log Z_n^{\para}(t))\le (9A+C(\para)) n^{2/3}.
\ee
\end{theorem}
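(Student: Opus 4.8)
The plan is to start from the variance identity \eqref{var1}, which gives $\Vvv(\log Z_n^\para(t)) = n\trigamf(\para) - t + 2E^\para_{n,t}(\sigma_0^+)$. Under assumption \eqref{assA} the first two terms contribute at most $An^{2/3}$ in absolute value, so everything reduces to bounding the annealed expectation $E^\para_{n,t}(\sigma_0^+)$ by a multiple of $n^{2/3}$. Since $\Vvv(\log Z_n^\para(t)) = E^\para_{n,t}|\sigma_0|$ by the remark following Theorem \ref{varthm}, the two quantities control each other, so the heart of the matter is an a priori upper bound on $E^\para_{n,t}(\sigma_0^+)$ — and by the symmetric argument on $E^\para_{n,t}(\sigma_0^-)$ — that feeds back into itself, allowing one to close the estimate.

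The key step is to estimate the tail $P^\para_{n,t}(\sigma_0 > r)$ for $r>0$ using the comparison lemma. By definition $Z^\para_{n,t}(\sigma_0 > r)/Z^\para_{n,t}(\sigma_0>0)$ is a ratio one can bound by iterating the inequalities \eqref{comp1}--\eqref{comp2}: replacing a large positive value of $\sigma_0$ forces the path to traverse the region $[0,r]\times\{0\}$, and the comparison lemma lets one trade the full partition function $Z^\para_{n,t}(\sigma_0>0)$ for products of ratios $Z_{0,k}(t)/Z_{0,k}(s)$ of the boundary-free partition functions $Z_{0,k}$ introduced in \eqref{Zdef1.01}--\eqref{Zdef1.02}. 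The point is that these ratios, and hence the quenched tail probabilities, are controlled by increments of the environment Brownian motions $B, B_k$ together with the concentration one already knows for $\log Z_{j,n}$ from the boundary-free estimate (Theorem \ref{zeta2thm} restated as in \eqref{Zlim5}) — or, more self-containedly, by a moment/Markov estimate on these ratios that produces a bound of the form $E^\para_{n,t}(\sigma_0^+) \le c(\para) n^{2/3} + \tfrac12 \Vvv(\log Z_n^\para(t)) + \cdots$, i.e. a bound in which the variance reappears with a coefficient strictly less than what the identity \eqref{var1} provides, so that it can be absorbed. This is exactly the mechanism by which the factor $9A$ and the additive $C(\para)n^{2/3}$ emerge: $A$ enters through the $|t - n\trigamf(\para)|$ gap, and tracking constants carefully through the comparison chain and the Gaussian tail estimates on the relevant increments gives the explicit $9A$.

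The main obstacle is the self-referential nature of the estimate: the bound on $E^\para_{n,t}(\sigma_0^+)$ obtained from the comparison lemma will itself involve the fluctuations of $\log Z$, so one must set it up so that the variance term that comes back carries a coefficient that \eqref{var1} can dominate, and then solve the resulting inequality for $\Vvv(\log Z_n^\para(t))$. Getting the bookkeeping of constants right — so that the coefficient is genuinely less than $1$ (after accounting for the factor $2$ in \eqref{var1}) and the leftover is $(9A + C(\para))n^{2/3}$ — is where the real work lies; the individual pieces (Gaussian tail bounds on Brownian increments, the comparison inequalities, the variance identity) are each routine once assembled in the right order. I would carry it out as: (i) rewrite the target using \eqref{var1} and reduce to bounding $E^\para_{n,t}(\sigma_0^\pm)$; (ii) express the relevant quenched tail ratio via the comparison lemma in terms of the $Z_{0,k}$; (iii) estimate moments of those ratios using Brownian increment bounds, producing an inequality $E^\para_{n,t}(\sigma_0^+) \le (\text{const})\,n^{2/3} + \theta\,\Vvv(\log Z_n^\para(t))$ with $\theta$ small; (iv) substitute back into \eqref{var1} and solve.
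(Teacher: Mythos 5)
Your step (i) — reducing via the variance identity \eqref{var1} to a bound on $E^\para_{n,t}(\sigma_0^\pm)$ — matches the paper, and so does the general idea of a self-referential estimate that is closed by absorption. But the engine you propose for the tail bound is not viable as described. First, invoking the concentration of the boundary-free $\log Z_{j,n}$ (Theorem \ref{zeta2thm}, equivalently \eqref{Zlim5} with fluctuation bounds) is circular: in the paper that result is \emph{derived from} Theorem \ref{UBthm} in Section \ref{sec:freeZ}, and there is no independent source for it at this stage — the whole point is that the model with boundary is the one where explicit computations are possible. Second, the comparison lemma \eqref{comp1}--\eqref{comp2} by itself gives monotonicity of ratios, not a quantitative tail estimate; the ratios $Z_{0,k}(t)/Z_{0,k}(s)$ are not controlled by raw increments of $B,B_k$ (Brownian increments over a window of length $u\sim n^{2/3}$ fluctuate at scale $n^{1/3}$, which is exactly the order you are trying to establish for $\log Z$, so nothing is gained). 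In the paper the comparison lemma is used only for the lower bound and for Section \ref{sec:freeZ}, not here.

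The missing key idea is the tilt of the \emph{boundary} parameter: for $u\ge 3An^{2/3}$ one bounds
$Q^\para_{n,t}(\sigma_0^+\ge u)\le \frac{Z_n^{\lambda}(t)}{Z_n^{\para}(t)}\,e^{(\para-\lambda)u}$ with $\lambda=\para+bu/n$, and then exploits the exactly known mean $\E\log Z_n^{\lambda}(t)=-n\digamf(\lambda)+\lambda t$ (a consequence of the Burke property, Theorem \ref{burkethm}) together with the curvature of $\digamf$: the linear gain $(\lambda-\para)u\sim u^2/n$ beats the quadratic loss $n(\lambda-\para)^2$ for $b$ small, producing a deterministic margin of order $u^2/n$ in the exponent. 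Chebyshev then brings in $\Vvv(\log Z_n^{\lambda})+\Vvv(\log Z_n^{\para})$, which Lemma \ref{4.1lem} and the identity \eqref{var1} convert back into $E_{n,t}(\sigma_0^+)$, giving the tail bound of Lemma \ref{lem_sigtail}, $\P(Q_{n,t}(\sigma_0^+\ge u)\ge e^{-su^2/n})\le Cn^2u^{-4}E_{n,t}(\sigma_0^+)+Cn^2u^{-3}$, supplemented by a large-deviation bound (Lemma \ref{ubldlemma}) for $u\gtrsim n$ using the i.i.d.\ Gamma structure of the $r_j$. Integrating the tail from a cutoff $Bn^{2/3}$ yields $E_{n,t}(\sigma_0^+)\le (B+C(\para))n^{2/3}+C(\para)B^{-3}E_{n,t}(\sigma_0^+)$, and the absorption happens through the choice of $B$ large — not through a fixed coefficient $\theta<1/2$ in front of the variance as in your sketch. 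Without the parameter tilt and the exact mean formula, your step (iii) has no mechanism to produce the crucial $u^2/n$ exponent, and the argument does not close.
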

The value 9 in the statement has no particular meaning, except as a constant
independent of $\para$.  
 As a first step we give a bound on the tails of $\sigma_0^\pm$.
 
\begin{lemma}\label{lem_sigtail} Assume \eqref{assA} and fix $K>0$. Then
we can fix 
$C=C(\para)<\infty$ large enough  and $s=s(K,\para)>0$ small enough   such that,
 for $3An^{2/3}\le u\le Kn $,  
we have the bound  
\be \P(Q_{n,t}(\sigma_0^\pm\ge u)\ge  e^{-s u^2/n})\le \frac{C n^2}{u^4} 
E_{n,t} (\sigma_0^\pm) + \frac{C n^2}{u^3}. \label{sigtail_a}
\ee
 \end{lemma}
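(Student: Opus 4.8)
The plan is to exploit the comparison Lemma \eqref{comp2}, which controls how the ratio $Z^\para_{n,t}(\sigma_0\gtrless 0)/Z^\para_{n,s}(\sigma_0\gtrless 0)$ relates to the deterministic-looking ratio $Z_{0,n}(t)/Z_{0,n}(s)$. The heuristic is that $Q_{n,t}(\sigma_0\ge u)$ is large only when the environment near the $x$-axis over the interval $[0,u]$ (or rather, near the origin) is atypically favorable, and that is a rare event with Gaussian-type cost $e^{-cu^2/n}$ in the relevant window $3An^{2/3}\le u\le Kn$. To make this quantitative I would write $Q_{n,t}(\sigma_0^+\ge u)$ as a ratio of partition functions and compare it, via \eqref{comp2} applied on the interval $[\,u\vee s_0,\,t\,]$ together with the decomposition $Z_{n,t}^\para(\sigma_0\in A)=Z_{n,s}^\para(\sigma_0\in A)e^{B_n(s,t)}+\int_s^t Z_{n-1,u}^\para(\sigma_0\in A)e^{B_n(u,t)}\,du$, to a quantity depending on the Brownian environment only through increments of $B$ and the $B_k$ over $[0,u]$. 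One should first use the variance identity and Remark after Theorem \ref{varthm}, namely $E^\para_{n,t}(\sigma_0)=t-n\trigamf(\para)$ together with \eqref{assA}, so that $E^\para_{n,t}(\sigma_0)=O(n^{2/3})$, which is what makes $u\ge 3An^{2/3}$ the correct threshold: below it the event is typical, above it genuinely rare.

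Concretely, I expect the argument to run as follows. First, bound $Q_{n,t}(\sigma_0^+\ge u)$ from above by a ratio $R_u:=Z^\para_{n,t}(\sigma_0\ge u)/Z^\para_{n,t}(\sigma_0<0)$ (or with denominator $Z^\para_{n,t}(0<\sigma_0<\delta u)$ for a small $\delta$), since $Q_{n,t}(\sigma_0<0)$ is bounded below by a positive constant with high probability — this uses \eqref{comp1}–\eqref{comp2} and the fact that $E^\para_{n,t}(\sigma_0^-)$ is controlled by the variance identity. Second, on the event $\{R_u\ge e^{-su^2/n}\}$, deduce that the environment must satisfy $\log(Z^\para_{n,t}(\sigma_0\ge u))-\log(Z^\para_{n,t}(\sigma_0<0))\ge -su^2/n$; rewrite the left side using the decomposition above so that $Z^\para_{n,t}(\sigma_0\ge u)$ is essentially $e^{-B(u)+\para u}Z_{1,n}(u,t)$ up to the integral remainder, and $Z^\para_{n,t}(\sigma_0<0)$ is comparable (via Burke, Theorem \ref{burkethm}, which makes $Z^\para_{n,s}$ stationary) to $Z_{0,n}(t)$ type quantities. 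Third, the resulting event becomes one about Brownian functionals: roughly, that $-B(u)+\para u + \log Z_{1,n}(u,t) - \log Z_{0,n}(t) \ge -su^2/n$, i.e.\ that $-B(u)+(\para-\text{drift})u$ exceeds something of order $-su^2/n$, while $\log Z_{1,n}(u,t)-\log Z_{1,n}(0,t)$ concentrates near its mean by the already-established concentration machinery. Chebyshev (second moment) applied to the pieces, using $E_{n,t}(\sigma_0^\pm)$ to bound the contribution of the integral remainder term and a fourth/third moment estimate for the Gaussian part, yields the two terms $Cn^2u^{-4}E_{n,t}(\sigma_0^\pm)$ and $Cn^2u^{-3}$ on the right of \eqref{sigtail_a}; the power $u^{-3}$ comes from $\P(-B(u)\ge -su^2/n + cu)\le \P(B(u)\le -c'u)$ being exponentially small but dominated in the stated range by the polynomial $Cn^2u^{-3}$ arising from Markov's inequality at third order applied to the fluctuations of $\log Z_{1,n}$, which are $O(n^{1/3})$, against the scale $u$.

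The main obstacle I anticipate is \textbf{bookkeeping the comparison inequalities and the remainder integral cleanly enough to get honest polynomial bounds rather than just qualitative smallness}. Lemma \eqref{comp1}–\eqref{comp2} only gives one-sided control, and the integral term $\int_s^t Z^\para_{n-1,u}(\sigma_0\in A)e^{B_n(u,t)}\,du$ in the decomposition is not obviously negligible; handling it is precisely where the term $Cn^2u^{-4}E_{n,t}(\sigma_0^\pm)$ enters, so one must track how an event of the form $\{\sigma_0\ge u\}$ forces the polymer to ``restart'' and pay, and estimate that cost by a first-moment bound on $\sigma_0^\pm$ divided by $u^4$ (four powers: roughly two from a Chebyshev step and two from the $u^2/n$ scale squared). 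The auxiliary $Z_{0,k}$ partition functions of \eqref{Zdef1.01}–\eqref{Zdef1.02}, whose ratios $Z_{0,n}(t)/Z_{0,n}(s)$ also concentrate (their logarithm has increments controlled just like $\log Z_{1,n}$), are the right comparison objects to make the Gaussian cost $e^{-su^2/n}$ visible; choosing $s=s(K,\para)$ small enough ensures $su^2/n$ stays below the genuine large-deviation rate for $-B(u)$ on the window $u\le Kn$, which is exactly where the hypothesis $u\le Kn$ is used.
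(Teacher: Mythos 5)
There is a genuine gap: your sketch never produces a mechanism that converts the event $\{Q_{n,t}(\sigma_0^\pm\ge u)\ge e^{-su^2/n}\}$ into a deviation event for a quantity whose mean and variance are actually computable at this stage of the paper, and the substitutes you invoke are circular. The paper's proof hinges on a parameter-tilting step that is absent from your outline: with $\lambda=\para\pm bu/n$ one has the pointwise bound $Q^\para_{n,t}(\sigma_0^+\ge u)\le e^{(\para-\lambda)u}\,Z_n^\lambda(t)/Z_n^\para(t)$ (insert $e^{\para s_0}\le e^{(\para-\lambda)u}e^{\lambda s_0}$ on $\{s_0\ge u\}$), and then the exact mean formula $\E\log Z_n^\para(t)=-n\digamf(\para)+\para t$ from the Burke property (Theorem \ref{burkethm}) together with a Taylor expansion of $\digamf$ shows that the centered difference $\overline{\log Z_n^\lambda(t)}-\overline{\log Z_n^\para(t)}$ must exceed a deterministic gain of order $u^2/n$; here the curvature of $\digamf$, the choice $b\sim u/n$, the hypothesis $u\ge 3An^{2/3}$ from \eqref{assA}, and $u\le Kn$ all enter. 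Chebyshev at scale $u^2/n$ then gives $Cn^2u^{-4}$ times the variances of the two free energies, and it is the variance identity of Theorem \ref{varthm} (plus Lemma \ref{4.1lem} to compare the variance at $\lambda$ with that at $\para$) that turns this into the terms $Cn^2u^{-4}E_{n,t}(\sigma_0^\pm)+Cn^2u^{-3}$ in \eqref{sigtail_a}. In your proposal the $E_{n,t}(\sigma_0^\pm)$ term is instead attributed to a remainder integral in a decomposition of $Z^\para_{n,t}(\sigma_0\in A)$, which is not a route that visibly closes.

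Moreover, two ingredients you lean on are not available here. First, you assert that $Q_{n,t}(\sigma_0<0)$ is bounded below by a constant with high probability; no such lower bound exists at this point (localization lower bounds for $\sigma_0$ come only in the later sections and themselves rely on the tail bounds being proved now), so using it is circular. Second, you invoke concentration of $\log Z_{1,n}(u,t)-\log Z_{1,n}(0,t)$ "by the already-established concentration machinery"; concentration for the boundary-free partition function is a consequence of the present estimates (Section 7), not an input, so this is circular as well. The comparison inequalities \eqref{comp2} and the auxiliary $Z_{0,k}$ are indeed the right tools later (in the lower-bound section and in Lemma \ref{LBlemma3}), but for the present tail bound they do not substitute for the tilting argument: the heuristic that a large $\sigma_0$ requires an atypical boundary Brownian path on $[0,u]$ ignores the possibility that the bulk environment drives the event, and you have no quantitative control of the bulk free energy to rule that out. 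As written, the proposal does not yield \eqref{sigtail_a}.
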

\begin{proof}We first consider the proof for $\sigma^+$.  
Set $\lambda=\para+\frac{bu}{n}$ where $b>0$ will be small.
Superscripts $\para$ and $\lambda$ indicate which parameter is used.  
 We have
\bea
Q^\para_{n,t}(\sigma_0^+\ge u)&=&\frac1{Z_{n}^\para(t) } \nn
\int_{-\infty<s_{0}<\dotsm<s_{n-1}<t}  \m1(s_0\ge u)
 \\
 &&\qquad  \times  \exp\bigl[ -B(s_0)+\para s_0 +  B_1(s_0,s_1) 
+\dotsm + B_n(s_{n-1},t)\bigr] \,ds_{0,n-1}.\nn\\
&\le&\frac1{Z_{n}^\para(t) } \nn
\int_{-\infty<s_{0}<\dotsm<s_{n-1}<t}  \m1(s_0\ge u) e^{(\para-\lambda)u}
 \\
 &&\qquad  \times  \exp\bigl[ -B(s_0)+\lambda s_0 +  B_1(s_0,s_1) 
+\dotsm + B_n(s_{n-1},t)\bigr] \,ds_{0,n-1}.\nn\\
&\le& \frac{Z_n^{\lambda}(t)}{Z_{n}^\para(t) }e^{(\para-\lambda) u}.\nn
\eea
Thus
\bea \nn
\P(Q^\para_{n,t}(\sigma_0^+\ge u)\ge e^{-su^2/n})&\le& 
\P\bigl(\,\frac{Z_n^{\lambda}(t)}{Z_{n}^\para(t) }e^{(\para-\lambda) u}\ge e^{-su^2/n}\bigr)\\
&=&\P(\log( Z_n^{\lambda}(t))-\log( Z_n^{\para}(t))\ge (\lambda-\para)u-su^2/n)\nn
\eea
From \eqref{rB1} and $e^{-r_k(t)}\sim$ Gamma($\para$,1) (Theorem \ref{burkethm}) 
\be\label{ElogZ}
\E \log( Z_n^{\para}(t))=-n \digamf(\para)+\para t
\ee
so upon centering 
\bea\nn 
&&\P(Q^\para_{n,t}(\sigma_0^+\ge u)\ge e^{-su^2/n})\le \\
\nn
&&\hskip15mm\P\bigl\{\overline{\log( Z_n^{\lambda}(t))}-\overline{\log( Z_n^{\para}(t))}\ge
n(\digamf(\lambda)-\digamf(\para))-{t}(\lambda-\para)+(\lambda-\para)u-su^2/n).  
\eea
%XXXX
Given $K$ we can restrict $b=b(K,\para)$ small enough so that 
$\abs{\lambda-\para}\le bK\le \para/2$ and then a 
  Taylor expansion gives 
\[
|\digamf(\lambda)-\digamf(\para)-(\lambda-\para) \trigamf(\para)|\le C(\para) (\lambda-\para)^2.  
\]
Together with $|t-n\trigamf(\para)|\le A n^{2/3}$ this leads to
\begin{align*}
&n(\digamf(\lambda)-\digamf(\para)-\frac{t}{n}(\lambda-\para))+(\lambda-\para)u-su^2/n\\
&\qquad \ge
-C(\para) b^2 u^2/n-A b u n^{-1/3}+  b u^2/n-s u^2/n\\
&\qquad\ge \frac{bu^2}{3n} \bigl(1-3C(\para)b\bigr)  +  \frac{bu}{3n^{1/3}}(un^{-2/3}-3A)   
 + \frac{bu^2}{3n}( 1-3s/b)  \\
&\qquad \ge C(\para) u^2/n.  
\end{align*}
The last line, with  $C(\para)>0$,  follows  by enforcing $u\ge 3An^{2/3}$, 
  choosing $s=b/3$ and then taking $b=b(\para)$ small enough. Put  this back above and apply Chebyshev's inequality:
\bea \nn
\P(Q^\para_{n,t}(\sigma_0^+\ge u)\ge e^{-su^2/n})&\le&\P(\overline{\log( Z_n^{\lambda}(t))}-\overline{\log( Z_n^{\para}(t))}\ge C u^2/n)\\ \nn
 &\le& C \frac{n^2}{u^4}
\Vvv\bigl[\log Z_n^{\lambda}(t)-\log Z_n^{\para}(t)\bigr]\\ \label{anotherUP}
&\le& 2C \frac{n^2}{u^4}\bigl(\Vvv[\log( Z_n^{\lambda}(t))]+\Vvv[\log( Z_n^{\para}(t))] \bigr).
\eea
{Using Lemma \ref{4.1lem} below together with  the definition of  $\lambda$:}
\begin{align*}
\Vvv(\log( Z_n^{\lambda}(t))&\le \Vvv(\log( Z_n^{\para}(t)) + n\abs{\trigamf(\lambda)-\trigamf(\para)}\\
&\le  \Vvv(\log( Z_n^{\para}(t)) +  C(\para) bu  \le   \Vvv(\log( Z_n^{\para}(t)) +  C(\para) u.
\end{align*}
We can drop $b$ from the upper bound above because the earlier requirements force
$b$ to be small. So we can safely assume  form the beginning that $b\le 1$.  

Continuing from (\ref{anotherUP}), 
\bea\nn
\P(Q^\para_{n,t}(\sigma_0^+\ge u)\ge e^{-su^2/n})
%&\le& C'' \frac{n^2}{u^4} \Vvv\bigl[\log( Z_n^{\lambda}(t)-\log( Z_n^{\para}(t))\bigr]\\ \nn 
&\le& C \frac{n^2}{u^4}\Vvv(\log( Z_n^{\para}(t)) + C\frac{n^2}{u^3}. 
\eea
Using Theorem \ref{varthm} and once more $u\ge 3An^{2/3}$  we get
\bea\nn
\P(Q^\para_{n,t}(\sigma_0^+\ge u)\ge e^{-su^2/n}) 
&\le& C \frac{n^2}{u^4}E_{n,t} (\sigma_0^+) +CA n^{8/3} u^{-4} + C\frac{n^2}{u^3}\\ \nn 
&\le&  C \frac{n^2}{u^4}E_{n,t} (\sigma_0^+) +C \frac{n^2}{u^3}
\eea
which is exactly what we wanted to prove.

To proof for $\sigma^-$ starts with  $\lambda=\para-\frac{bu}{n}$ with $b>0$ small. 
Using  \be\nn \ind(s_0\le -u) e^{\para s_0}\le e^{-(\para-\lambda) u} \ind(s_0\le -u) e^{\lambda s_0}\ee
we get
\be\nn
Q_{n,t}^{\para}(\sigma_0^{-}\ge u)\le \frac{Z_{n}^{\lambda}(t)}{Z_n^{\para}(t)} e^{-(\para-\lambda)u}
\ee 
and the rest of the proof goes the same way.
\end{proof}

\begin{lemma}\label{4.1lem}
For  $\para,  \lambda>0$,   
\[
\bigl\lvert \Vvv(\log( Z_n^{\lambda}(t)) - \Vvv(\log( Z_n^{\para}(t)) \bigr\rvert 
\le  n\abs{\trigamf(\lambda)-\trigamf(\para)}.  
\]
\end{lemma}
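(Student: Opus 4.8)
The plan is to combine the two representations of the variance furnished by Theorem \ref{varthm} with a monotonicity of $E^\para_{n,t}(\sigma_0^\pm)$ in the parameter $\para$. By symmetry it is enough to treat $\lambda\ge\para$, in which case $\abs{\trigamf(\lambda)-\trigamf(\para)}=\trigamf(\para)-\trigamf(\lambda)$ since $\trigamf$ is strictly decreasing. Subtracting the first line of \eqref{var1} evaluated at $\para$ from its value at $\lambda$, and then doing the same with the second line, I would record the two identities
\[
\Vvv[\log Z_n^{\lambda}(t)]-\Vvv[\log Z_n^{\para}(t)]
= n\bigl(\trigamf(\lambda)-\trigamf(\para)\bigr)+2\bigl(E^\lambda_{n,t}(\sigma_0^+)-E^\para_{n,t}(\sigma_0^+)\bigr)
\]
and
\[
\Vvv[\log Z_n^{\lambda}(t)]-\Vvv[\log Z_n^{\para}(t)]
= -n\bigl(\trigamf(\lambda)-\trigamf(\para)\bigr)+2\bigl(E^\lambda_{n,t}(\sigma_0^-)-E^\para_{n,t}(\sigma_0^-)\bigr).
\]
Thus the lemma follows once one knows $E^\lambda_{n,t}(\sigma_0^+)\ge E^\para_{n,t}(\sigma_0^+)$, which forces the first display to be $\ge -n\abs{\trigamf(\lambda)-\trigamf(\para)}$, and $E^\lambda_{n,t}(\sigma_0^-)\le E^\para_{n,t}(\sigma_0^-)$, which forces the second display to be $\le n\abs{\trigamf(\lambda)-\trigamf(\para)}$.

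To obtain these monotonicities I would argue at the quenched level for a fixed environment $\om$. Integrating away $s_1<\dots<s_{n-1}$ in \eqref{Qdef2}, the law of $\sigma_0$ under $Q^{\para,\om}_{n,t}$ has density proportional, as a function of $v<t$, to $e^{-B(v)+\para v}\,Z_{1,n}^\om(v,t)$, where $Z_{1,n}^\om(v,t)$ is given by \eqref{Zdef1.1} (that definition still makes sense for $v<0$). Hence for $\lambda\ge\para$ the ratio of the densities at parameters $\lambda$ and $\para$ equals, up to a $v$-independent normalization, $e^{(\lambda-\para)v}$, which is non-decreasing in $v$; by the standard fact that a monotone likelihood ratio implies stochastic ordering, the law of $\sigma_0$ under $Q^{\lambda,\om}_{n,t}$ stochastically dominates that under $Q^{\para,\om}_{n,t}$. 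Applying this to the non-decreasing functions $v\mapsto v^+$ and $v\mapsto -v^-$ gives $E^{Q^{\lambda,\om}_{n,t}}(\sigma_0^+)\ge E^{Q^{\para,\om}_{n,t}}(\sigma_0^+)$ and $E^{Q^{\lambda,\om}_{n,t}}(\sigma_0^-)\le E^{Q^{\para,\om}_{n,t}}(\sigma_0^-)$ for each $\om$; averaging over $\om$ produces the two annealed inequalities needed above. (A variant of the same computation would differentiate the variance formula in $\para$ and show $\abs{\partial_\para \Vvv[\log Z_n^\para(t)]}\le n\abs{\trigamf'(\para)}$, then integrate from $\para$ to $\lambda$ using that $\trigamf$ is monotone.)

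The only point requiring care is integrability bookkeeping: transferring the stochastic-order comparison from bounded truncations $\phi\wedge M$ to $\sigma_0^\pm$ themselves uses monotone convergence, which requires $E^\para_{n,t}(\sigma_0^\pm)<\infty$, equivalently the finiteness of $\Vvv[\log Z_n^\para(t)]$; but this finiteness is immediate (for instance from \eqref{rB1}, since $\log Z_n^\para(0)=\sum_{k=1}^n r_k(0)$ is a sum of i.i.d.\ terms with finite variance and $\log Z_n^\para(t)-\log Z_n^\para(0)=\para t-Y_n(0,t)$ has finite variance) and in any case is what Theorem \ref{varthm} already presupposes. With that in place the argument is complete; no genuine obstacle remains beyond this routine stochastic-ordering manipulation.
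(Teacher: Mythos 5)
Your proof is correct, and its skeleton coincides with the paper's: both subtract the two lines of the variance identity \eqref{var1} at the two parameter values and reduce the lemma (for $\lambda\ge\para$) to the monotonicity statements $E^\lambda_{n,t}(\sigma_0^+)\ge E^\para_{n,t}(\sigma_0^+)$ and $E^\lambda_{n,t}(\sigma_0^-)\le E^\para_{n,t}(\sigma_0^-)$, whose signs then squeeze the variance difference between $\pm n\abs{\trigamf(\lambda)-\trigamf(\para)}$. Where you differ is in how the monotonicity is obtained. The paper differentiates the quenched expectation in the parameter, using $\frac{\partial}{\partial\para}E^{Q^\para}(\sigma_0^{\pm})=\Cov^{Q^\para}(\sigma_0,\sigma_0^{\pm})$ together with the elementary identity $\Cov(X,X^{\pm})=\pm\Var(X^{\pm})\pm E X^+E X^-$, which has the required sign. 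You instead note that passing from $\para$ to $\lambda$ tilts the quenched marginal density of $\sigma_0$ (proportional to $e^{-B(v)+\para v}Z_{1,n}(v,t)$ on $v<t$) by the monotone factor $e^{(\lambda-\para)v}$, so the law of $\sigma_0$ under $Q^{\lambda,\om}_{n,t}$ stochastically dominates that under $Q^{\para,\om}_{n,t}$; applying this to the nondecreasing maps $v\mapsto v^+$ and $v\mapsto -v^-$ and averaging over the environment yields the same two inequalities. These are essentially the integrated and infinitesimal versions of one another: your route avoids justifying differentiation under the (quenched) expectation, at the cost of the truncation/monotone-convergence bookkeeping you correctly flag, while the paper's derivative--covariance identity is shorter once one accepts the interchange. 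Either way the lemma follows; there is no gap in your argument.
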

\begin{proof}
Assume $\lambda>\para$.  From Theorem \ref{varthm}
\begin{align*}
\Vvv(\log( Z_n^{\lambda}(t))-\Vvv(\log( Z_n^{\para}(t))&=n(\trigamf(\lambda)-\trigamf(\para))
+2 \bigl[ E^\lambda_{n,t} (\sigma_0^+) -  E^\para_{n,t} (\sigma_0^+)\bigr]\\
&=-n(\trigamf(\lambda)-\trigamf(\para))+2 \bigl[ E^\lambda_{n,t} (\sigma_0^-) - 
 E^\para_{n,t} (\sigma_0^-)\bigr].
\end{align*} 
From the definition (\ref{Qdef2}) of  the quenched expectation  
\begin{align*}
\frac\partial{\partial\para}E^{Q^{\para}} (\sigma_0^{\pm})= \Cov^{Q^\para}(\sigma_0,\sigma_0^{\pm}). 
\end{align*}
For any random variable 
\begin{align*}
\Cov(X,X^{\pm})&=E ((X^+-X^-)X^\pm)-E (X^+-X^-)E X^\pm \\
&=\pm\Var (X^\pm) \pm E X^+ E X^-. 
\end{align*}
Consequently 
\[
 E^\lambda_{n,t} (\sigma_0^-) -  E^\para_{n,t} (\sigma_0^-)\le 0 \le
E^\lambda_{n,t} (\sigma_0^+) -  E^\para_{n,t} (\sigma_0^+).   
\]
The claim follows. 
\end{proof}
Next the tail bound for $\sigma_0^\pm$ for larger deviations.  

\begin{lemma}   Assume \eqref{assA}.  Let $\delta>0$.  
 Then there exist  $c=c(\para, \delta)<\infty$ and 
  $s=s(\para, \delta)>0$ such that for %$n\ge 1$ and  
  $u\ge \max\{\delta n, 3An^{2/3}\} $
\be
\P(Q_{n,t}(\sigma_0^\pm\ge u)\ge e^{-su})\le 2 e^{-c u}.
\ee
\label{ubldlemma}\end{lemma}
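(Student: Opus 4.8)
The plan is to reuse the parameter-tilting scheme of Lemma~\ref{lem_sigtail}, but with a \emph{fixed} shift of $\para$ instead of one of size $u/n$, and with an \emph{exponential} Chebyshev bound in place of the second-moment bound; this is exactly what upgrades the polynomial tail to an exponential one. Consider the event $\{\sigma_0^+\ge u\}$ (the case of $\sigma_0^-$ is symmetric, with a downward shift). Fix a small $b>0$ to be chosen, set $\lambda=\para+b$, and argue as in the first displays of the proof of Lemma~\ref{lem_sigtail}: from $\ind(s_0\ge u)e^{\para s_0}\le e^{-bu}\ind(s_0\ge u)e^{\lambda s_0}$ one gets $Q^\para_{n,t}(\sigma_0^+\ge u)\le \frac{Z_n^\lambda(t)}{Z_n^\para(t)}\,e^{-bu}$, hence
\[
\P\bigl(Q^\para_{n,t}(\sigma_0^+\ge u)\ge e^{-su}\bigr)\le \P\bigl(\log Z_n^\lambda(t)-\log Z_n^\para(t)\ge (b-s)u\bigr).
\]
Centering with \eqref{ElogZ} (valid with $\para$ replaced by $\lambda$) and expanding $\digamf$ to second order as in Lemma~\ref{lem_sigtail}, the mean $\E[\log Z_n^\lambda(t)-\log Z_n^\para(t)]$ is at most $C(\para)b^2 n+b\,An^{2/3}$. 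Using $An^{2/3}\le u/3$ (from $u\ge 3An^{2/3}$) and $n\le u/\delta$ (from $u\ge\delta n$), and then choosing $b=b(\para,\delta)$ and afterwards $s=s(\para,\delta)$ small enough, the recentered threshold is at least $\tfrac b2 u$. It therefore suffices to bound $\P(\overline{\log Z_n^\lambda(t)}\ge \tfrac b4 u)$ and $\P(\overline{\log Z_n^\para(t)}\le -\tfrac b4 u)$.

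Both are handled by an exponential moment estimate for the centered log-partition function. By \eqref{rB1} one has $Z_n^\mu(t)=\bigl(\prod_{k=1}^n e^{r_k^\mu(t)}\bigr)e^{\mu t-B(t)}$ for every parameter $\mu>0$, and by Theorem~\ref{burkethm} applied with $s_1=\dots=s_n=t$ together with Dufresne's identity \cite{dufr-osaka01}, the variables $r_1^\mu(t),\dots,r_n^\mu(t)$ are i.i.d.\ with $r_1^\mu(t)\eqd-\log\eta$, $\eta\sim Gamma(\mu,1)$; hence $\E[e^{\alpha r_1^\mu(t)}]=\Gamma(\mu-\alpha)/\Gamma(\mu)<\infty$ for $0<\alpha<\mu$, and $\E[e^{-\alpha r_1^\mu(t)}]=\Gamma(\mu+\alpha)/\Gamma(\mu)<\infty$ for all $\alpha>0$. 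Since $\E[e^{\pm\alpha B(t)}]=e^{\alpha^2 t/2}$, a Cauchy--Schwarz split (the $r_k^\mu(t)$ are mutually independent by Theorem~\ref{burkethm}, while their independence from $B(t)$ is not transparent, but the product form is enough) yields, for small $\alpha>0$ and after accounting for the mean \eqref{ElogZ},
\[
\E\bigl[e^{\alpha\,\overline{\log Z_n^\lambda(t)}}\bigr]\le e^{\psi(\alpha)n+\alpha^2 t},\qquad \E\bigl[e^{-\alpha\,\overline{\log Z_n^\para(t)}}\bigr]\le e^{\psi(\alpha)n+\alpha^2 t},
\]
with $\psi(0)=\psi'(0)=0$, hence $\psi(\alpha)\le C(\para)\alpha^2$ on a neighbourhood of $0$ (the constraint $2\alpha<\para$ ensures finiteness in the first bound). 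Markov's inequality then bounds each of the two probabilities above by $\exp(-\tfrac{\alpha b}{4}u+\psi(\alpha)n+\alpha^2 t)$; inserting once more $t\le n\trigamf(\para)+u/3$ and $n\le u/\delta$, the exponent is at most $-u\alpha(\tfrac b4-C(\para)\alpha/\delta-\alpha/3)$, so choosing $\alpha=\alpha(\para,\delta)>0$ small makes it $\le -cu$ with $c=c(\para,\delta)>0$. Summing the two probabilities proves $\P(Q^\para_{n,t}(\sigma_0^+\ge u)\ge e^{-su})\le 2e^{-cu}$. For $\sigma_0^-$ one takes $\lambda=\para-b$ with $0<b<\para$ (so $\lambda>0$); here $u$ may be much larger than $n$, but only $n\le u/\delta$ is used, so the argument is unchanged.

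I expect the \emph{main difficulty} to be the exponential moment estimate in the second paragraph, together with making all the small parameters come out consistently. The key points are that the shift $b$ must be a fixed constant, so that $\lambda$ stays in a compact subset of $(0,\infty)$ and $\psi$ — and hence $b,s,\alpha,c$ — depends only on $\para$; and that the dependence on $A$ must be routed entirely through $An^{2/3}\le u/3$, so that none of the final constants depends on $A$, as the statement requires.
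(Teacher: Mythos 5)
Your proposal is correct and follows essentially the same route as the paper: a fixed tilt $\lambda=\para\pm\nu$, centering via \eqref{ElogZ} and a Taylor expansion of $\digamf$ with the $A$-dependence absorbed through $An^{2/3}\le u/3$ and $n\le u/\delta$, and then an exponential Chebyshev bound exploiting that, by \eqref{rB1}, Theorem \ref{burkethm} and Dufresne's identity, $\log Z_n^\mu(t)$ is (up to the Gaussian boundary term) a sum of $n$ i.i.d.\ variables with exponential moments. The only cosmetic difference is that the paper cancels $B(t)$ by subtracting the two representations before splitting, whereas you split first and dispose of the Brownian term by Cauchy--Schwarz; both are fine.
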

\begin{proof}
We do the case $\sigma_0^+$.  The argument for $\sigma_0^-$ 
is analogous. Set $\lambda=\para+\nu$ where $\nu>0$ is small.
Then
\bea 
\nn
\P(Q^\para_{n,t}(\sigma_0^+\ge u)\ge e^{-su})&\le& \P(\frac{Z_n^{\lambda}(t)}{Z_{n}^\para(t) }e^{(\para-\lambda) u}\ge e^{-su})\\
&=&\P(\log( Z_n^{\lambda}(t))-\log( Z_n^{\para}(t))\ge (\lambda-\para)u-su).\nn
\eea 
After centering:
\bea \nn
&&\P(Q^\para_{n,t}(\sigma_0^+\ge u)\ge e^{-su})\le\\
&&\hskip15mm
\P\bigl(\overline{\log( Z_n^{\lambda}(t))}-\overline{\log( Z_n^{\para}(t))}\ge 
n(\digamf(\lambda)-\digamf(\para))-{t}(\lambda-\para)+(\lambda-\para)u-su\bigr).\label{ldp1}
\eea 
From a Taylor expansion, $u\ge  \max\{\delta n, 4An^{2/3}\} $, and then fixing $\nu$   small
enough and $s=\nu/2$,   
\begin{align*}
&n(\digamf(\lambda)-\digamf(\para))- {t}(\lambda-\para)+(\lambda-\para)u-su \\
&\qquad \ge 
-C(\para) \nu^2 n -A \nu n^{2/3} +\nu u-s u\\
&\qquad\ge C(\delta, \para) u.  \nn
\end{align*} 
From (\ref{Zincr1}) and (\ref{Z0}) 
\be \label{diff}
\overline{\log( Z_n^{\lambda}(t))}-\overline{\log( Z_n^{\para}(t))}=\sum_{j=1}^n \overline{r_j^{\lambda}(t)}-\sum_{j=1}^n \overline{r_j^{\para}(t)}  
\ee
where, for a fixed $t$,  
 $e^{-r^\para_j(t)}$  are i.i.d.~$\textup{Gamma}(\para,1)$ variables, and similarly $e^{-r^\lambda_j(t)}$with parameter $\lambda$.
 Thus for  certain sums $S'_n, S''_n$ of i.i.d.\ mean zero variables with an exponential moment
\[ 
\P(Q^\para_{n,t}(\sigma_0^+\ge u)\ge e^{-su})\le  \P( S'_n-S''_n\ge Cu) \le {\P(S'_n\ge Cu/2)+\P(S''_n<-Cu/2)\le} e^{-cu} 
 \]
where the last inequality follows from $u\ge\delta n$ and   standard large deviation theory.
\end{proof}

Now we combine the two deviation estimates into a  moment bound.  

\begin{lemma}   Assume \eqref{assA}.   
 Then there exists  $C(\para)<\infty$ such that 
\be  E_{n,t}(\sigma_0^\pm) \le (4A+C(\para))n^{2/3}.
\label{sigmamom}\ee
\end{lemma}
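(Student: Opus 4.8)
The plan is to integrate the tail bounds from Lemmas~\ref{lem_sigtail} and~\ref{ubldlemma} against the trivial identity $E_{n,t}(\sigma_0^\pm)=\int_0^\infty \P\text{-expectations of }Q_{n,t}(\sigma_0^\pm>u)\,du$, more precisely to bound $E_{n,t}(\sigma_0^\pm)=\E E^{Q_{n,t}}(\sigma_0^\pm) \le \int_0^\infty \E[Q_{n,t}(\sigma_0^\pm>u)]\,du$. The key mechanism is that both lemmas control $\P\bigl(Q_{n,t}(\sigma_0^\pm\ge u)\ge e^{-su^2/n}\bigr)$ (resp.\ $e^{-su}$), so on the complementary event one has the pointwise bound $Q_{n,t}(\sigma_0^\pm\ge u)< e^{-su^2/n}$, and then $\E[Q_{n,t}(\sigma_0^\pm\ge u)]\le e^{-su^2/n}+\P(\text{bad event})$.

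First I would split $\int_0^\infty \E[Q_{n,t}(\sigma_0^\pm>u)]\,du$ into three ranges: $0\le u\le 3An^{2/3}$, $3An^{2/3}\le u\le \delta n$, and $u\ge \delta n$, for a $\delta=\delta(\para)$ to be chosen. On the first range we simply bound $Q_{n,t}(\sigma_0^\pm>u)\le 1$, contributing at most $3An^{2/3}$ — this is where the $4A$ in the statement comes from (with a little room to spare). On the middle range apply Lemma~\ref{lem_sigtail} with $K=\delta$: for each such $u$,
\[
\E[Q_{n,t}(\sigma_0^\pm>u)] \le e^{-su^2/n} + \frac{Cn^2}{u^4}E_{n,t}(\sigma_0^\pm) + \frac{Cn^2}{u^3}.
\]
Integrating $e^{-su^2/n}$ over $u\in\bR_+$ gives $O(\sqrt{n/s})=O(\sqrt n)=O(n^{2/3})$. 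Integrating $Cn^2 u^{-4}$ from $3An^{2/3}$ upward gives $O(n^2\cdot (An^{2/3})^{-3})=O(A^{-3})=O(1)$, and integrating $Cn^2u^{-3}$ from $3An^{2/3}$ gives $O(n^2\cdot(An^{2/3})^{-2})=O(A^{-2}n^{2/3})=O(n^{2/3})$. On the last range $u\ge\delta n$, Lemma~\ref{ubldlemma} gives $\E[Q_{n,t}(\sigma_0^\pm>u)]\le e^{-su}+2e^{-cu}$, whose integral over $u\ge\delta n$ is exponentially small in $n$, hence negligible.

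Collecting these, one obtains
\[
E_{n,t}(\sigma_0^\pm) \le 3An^{2/3} + C(\para)n^{2/3} + C(\para) E_{n,t}(\sigma_0^\pm)\cdot n^{-1},
\]
where the last term came from the $Cn^2u^{-4}E_{n,t}(\sigma_0^\pm)$ piece integrated only over $u\ge 3An^{2/3}$, which produces a coefficient of order $n^2\cdot(An^{2/3})^{-3}=O(n^{-1})$ on $E_{n,t}(\sigma_0^\pm)$. The main obstacle is precisely this self-referential term: Lemma~\ref{lem_sigtail} bounds the moment in terms of itself, so one must verify a priori that $E_{n,t}(\sigma_0^\pm)$ is finite (which follows from the exponential tail of Lemma~\ref{ubldlemma} beyond scale $n$, together with the polynomial bound up to that scale), and then absorb the $O(n^{-1})E_{n,t}(\sigma_0^\pm)$ term into the left side for $n$ large. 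After absorption the coefficient of $n^{2/3}$ is, say, $3A$ from the trivial range plus the contributions of the $u^{-3}$ and Gaussian integrals; a slightly careful accounting of constants gives $4A+C(\para)$ as claimed, the extra $A$ providing the needed slack. I would also note that the same computation with $u^p\,du$ in place of $du$ yields the stated $E^\para_{n,t}(|\sigma_0|^p)\le Cn^{2p/3}$ for $1\le p<3$, the restriction $p<3$ being exactly what keeps the $u^{-4}$ integral convergent after multiplying by $u^{p-1}$.
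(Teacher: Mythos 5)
Your overall skeleton (split the $u$-integral into a trivial range, a middle range handled by Lemma \ref{lem_sigtail}, and a large-deviation range handled by Lemma \ref{ubldlemma}, then absorb the self-referential term) is the same as the paper's, but the crucial absorption step contains a computational error that breaks the argument. You claim the coefficient of $E_{n,t}(\sigma_0^\pm)$ produced by integrating $Cn^2u^{-4}E_{n,t}(\sigma_0^\pm)$ over $u\ge 3An^{2/3}$ is $O(n^{-1})$; in fact
\[
\int_{3An^{2/3}}^\infty \frac{Cn^2}{u^4}\,du \;=\; \frac{Cn^2}{3\,(3An^{2/3})^3}\;=\;\frac{C}{81A^3},
\]
a constant independent of $n$ (you computed exactly this, correctly, two lines earlier for the term without the factor $E_{n,t}(\sigma_0^\pm)$). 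So there is nothing that becomes small as $n\to\infty$, and "absorb for $n$ large" does not work. Worse, the constant is $C(\para)A^{-3}$: it need not be less than $1$, and assumption \eqref{assA} allows $A$ to be arbitrarily small (even $A=0$), in which case the coefficient is large or the integral diverges outright because your lower cutoff $3An^{2/3}$ collapses.

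The repair is the one the paper uses: do not tie the lower cutoff only to $A$. Take the cutoff at $Bn^{2/3}$ with $B=r\vee(3A)$, where $r\ge1$ is a free parameter; the middle-range integral then yields $\frac{C(\para)}{B^3}E_{n,t}(\sigma_0^\pm)+\frac{C(\para)}{B^2}n^{2/3}+C(\para)$, and choosing $r=(4C(\para))^{1/3}$ makes the coefficient of $E_{n,t}(\sigma_0^\pm)$ at most $1/4$, uniformly in $n$ and $A$, so it can be moved to the left side (after noting finiteness of $E_{n,t}(\sigma_0^\pm)$, which you correctly flag). The trivial range then contributes $Bn^{2/3}\le(3A+r)n^{2/3}$, and after rearranging one gets the stated bound $(4A+C(\para))n^{2/3}$. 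With that modification the rest of your outline (Lemma \ref{ubldlemma} with $\delta=1$ on $u\ge n$, and Lemma \ref{lem_sigtail} with $K=1$ on the middle range) goes through as in the paper.
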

\begin{proof}  We do the computation for $\sigma_0^+$. It is identical for $\sigma_0^-$.
  Let $r\ge 1$, to be chosen at the end,
and $B=r\vee (3A)$.  Begin with 
\begin{align}
 E_{n,t}(\sigma_0^+) \le Bn^{2/3} + \int_{Bn^{2/3}}^{n\vee (Bn^{2/3})} P(\sigma_0^+\ge u)\,du 
 + \int_{n\vee (Bn^{2/3})}^\infty P(\sigma_0^+\ge u)\,du.
\label{int3}\end{align}
The last integral in \eqref{int3} is bounded by a constant that depends on $\para$,
 uniformly over $r>0$, $A>0$ and  $n\ge 1$,  
as can be seen by an application of  Lemma \ref{ubldlemma} with $\delta=1$. 
The middle integral is bounded as follows, utilizing Lemma \ref{lem_sigtail} with $K=1$.  
\begin{align*}
  &\int_{Bn^{2/3}}^{n\vee (Bn^{2/3})} P(\sigma_0^+\ge u)\,du \\
 &\le   \int_{Bn^{2/3}}^{n\vee (Bn^{2/3})} \left\{ e^{-s u^2/n}+\P(Q_{n,t}(\sigma_0^+\ge u) \ge e^{-su^2/n})\right\}\,du
\\
   &\le \int_{Bn^{2/3}}^\infty   \biggl(   \frac{C(\para) n^2}{u^4} 
E_{n,t} (\sigma_0^+) + \frac{C(\para) n^2}{u^3}  \biggr) \,du   + C(\para) \\[3pt]
&\le  \frac{C(\para)}{B^3} E_{n,t} (\sigma_0^+) + \frac{C(\para) }{B^2}n^{2/3}  + C(\para).   \end{align*}
Combine the estimates, noting that $B\ge 1$ and $n\ge 1$,  to 
\[    E_{n,t}(\sigma_0^+) \le (B+C(\para)) n^{2/3} + \frac{C(\para)}{r^3} E_{n,t} (\sigma_0^+). \]
Choose $r=(4C(\para))^{1/3}$.  Rearranging gives the conclusion.  
\end{proof}

Theorem \ref{UBthm} is now proved by \eqref{sigmamom}, the variance identity \eqref{var1} 
and assumption \eqref{assA}.   For future reference let us also state tail bounds
on $\sigma_0^\pm$ that we obtain by combining \eqref{sigmamom} with   Lemmas
 \ref{lem_sigtail} and \ref{ubldlemma}.
 
 \begin{proposition}
Under assumption \eqref{assA} we have these tail bounds, for finite positive  constants 
$C$, $c$ and  $s$ that depend on $\para$, and for $b>0$:    
\be  \label{sigmatail1}
\P(Q_{n,t}({\sigma_0}^\pm \ge b n^{2/3})\ge  e^{-s b^2 n^{1/3}})\le C b^{-3} 
\quad\text{for $3A\le b\le n^{1/3}$,}
\ee
\be  \label{sigmatail2}
\P(Q_{n,t}({\sigma_0}^\pm \ge b n^{2/3})\ge  e^{-s b n^{2/3}})\le 2e^{-cbn^{2/3}}
\quad\text{for $b\ge n^{1/3}\vee(3A)$}
\ee
and 
\be 
\label{sigmatail}
P_{n,t}(\sigma_0^\pm \ge b n^{2/3})\le C b^{-3}\quad\text{for $b\ge 3A$}.
\ee 
\label{sigmaprop}\end{proposition}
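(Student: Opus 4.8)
The plan is to obtain all three estimates by specializing $u=bn^{2/3}$ in Lemmas~\ref{lem_sigtail} and \ref{ubldlemma} and then feeding in the moment bound \eqref{sigmamom}; I would prove them in the order \eqref{sigmatail1}, \eqref{sigmatail2}, \eqref{sigmatail}.

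First, for \eqref{sigmatail1} I would apply Lemma~\ref{lem_sigtail} with $K=1$ and $u=bn^{2/3}$. The hypothesis $3An^{2/3}\le u\le Kn$ becomes exactly $3A\le b\le n^{1/3}$, the threshold $e^{-su^2/n}$ becomes $e^{-sb^2n^{1/3}}$, and the right-hand side of \eqref{sigtail_a} becomes $C(\para)\,b^{-4}\,E_{n,t}(\sigma_0^\pm)\,n^{-2/3}+C(\para)\,b^{-3}$. Inserting $E_{n,t}(\sigma_0^\pm)\le(4A+C(\para))n^{2/3}$ from \eqref{sigmamom} turns the first term into $C(\para)(4A+C(\para))\,b^{-4}$; using $b\ge3A$ to write $A\,b^{-4}\le b^{-3}/3$, and enlarging the constant on the bounded range of $b$ where the claimed bound already exceeds $1$, the whole expression is at most $C(\para)\,b^{-3}$, which is \eqref{sigmatail1}.

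Second, \eqref{sigmatail2} is a direct restatement of Lemma~\ref{ubldlemma} with $\delta=1$ and $u=bn^{2/3}$: the condition $u\ge\max\{n,3An^{2/3}\}$ is precisely $b\ge n^{1/3}\vee(3A)$, and the conclusion reads as stated. For the annealed bound \eqref{sigmatail} I would start from the elementary inequality $Q_{n,t}(\sigma_0^\pm\ge u)\le\theta+\ind\{Q_{n,t}(\sigma_0^\pm\ge u)\ge\theta\}$, valid for $\theta\in[0,1]$, and take $\P$-expectation to get $P_{n,t}(\sigma_0^\pm\ge u)\le\theta+\P(Q_{n,t}(\sigma_0^\pm\ge u)\ge\theta)$. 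For $3A\le b\le n^{1/3}$ I would choose $\theta=e^{-sb^2n^{1/3}}$ and bound the probability by \eqref{sigmatail1}; for $b\ge n^{1/3}\vee(3A)$ I would choose $\theta=e^{-sbn^{2/3}}$ and bound it by \eqref{sigmatail2}. These two ranges exhaust $b\ge3A$. In the first range $\theta\le e^{-sb^2}\le C(\para)b^{-3}$ and in the second $\theta\le e^{-sb}\le C(\para)b^{-3}$, since $\sup_{b>0}b^3e^{-sb^2}<\infty$ and $\sup_{b>0}b^3e^{-sb}<\infty$; the same fact turns the $2e^{-cbn^{2/3}}$ coming from \eqref{sigmatail2} into $C(\para)b^{-3}$. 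Adding the two pieces gives \eqref{sigmatail}.

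There is no genuinely hard step here: all the probabilistic content already lives in Lemmas~\ref{lem_sigtail}--\ref{ubldlemma} and in \eqref{sigmamom}, and what remains is bookkeeping. The points that call for a little care are (i) keeping the constants dependent on $\para$ only, which works because $b\ge3A$ lets every stray factor $A/b\le1/3$ be absorbed; (ii) matching the two thresholds $e^{-su^2/n}$ and $e^{-su}$ to the correct side of the crossover $b=n^{1/3}$; and (iii) discarding the bounded range of $b$ on which $C(\para)b^{-3}\ge1$ makes every assertion vacuous.
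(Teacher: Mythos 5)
Your proposal is correct and takes essentially the same route as the paper: Proposition~\ref{sigmaprop} is stated there precisely as the result of combining the moment bound \eqref{sigmamom} with Lemmas~\ref{lem_sigtail} and \ref{ubldlemma}, which is exactly your substitution $u=bn^{2/3}$ (with $K=1$, $\delta=1$) plus the standard splitting $P_{n,t}(\sigma_0^\pm\ge u)\le\theta+\P(Q_{n,t}(\sigma_0^\pm\ge u)\ge\theta)$ for the annealed bound. The bookkeeping (absorbing $A$ via $b\ge 3A$, discarding the range where $C b^{-3}\ge 1$) is handled correctly.
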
 
From \eqref{sigmatail} we get the moment bound 
\be   E_{n,t}(\,\abs{\sigma_0}^p\,) \le (3^pA^p+C(\para))n^{2p/3}\quad
\text{for $1\le p<3$.}  
\label{sigmapmom}\ee
%{\bf ** Look again! **} 

%%%%%%%%%%%%%%     LOWER BOUND     %%%%%%%%%%%%%%%%

\section{Lower bound on the variance}
\begin{theorem}\label{LBthm}
Assume that   \eqref{assA} holds. Then there exists a constant 
$C_1=C_1(A,\para)$ such that
\be 
\Vvv (\log Z_n^{\para}(t))\ge C_1 n^{2/3}.
\ee
\end{theorem}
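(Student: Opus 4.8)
The plan is to convert the bound into a statement about the exit point $\sigma_0$, via Theorem \ref{varthm}, and then run a contradiction argument whose engine is the comparison Lemma together with the explicit free energy $\E\log Z_n^\para(t)=-n\digamf(\para)+\para t$. By Theorem \ref{varthm} and the remark following it, $\Vvv(\log Z_n^\para(t))=E^\para_{n,t}(\sigma_0^+)+E^\para_{n,t}(\sigma_0^-)$ while $E^\para_{n,t}(\sigma_0^+)-E^\para_{n,t}(\sigma_0^-)=t-n\trigamf(\para)$, and since a variance dominates the absolute value of any mean we get $\Vvv(\log Z_n^\para(t))\ge|t-n\trigamf(\para)|$. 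Hence the case $|t-n\trigamf(\para)|\ge\tfrac12 n^{2/3}$ is immediate, and it suffices to assume $\Vvv(\log Z_n^\para(t))\le\e n^{2/3}$ for a constant $\e>0$ to be fixed at the very end and deduce a contradiction for $n$ large. This hypothesis forces $E^\para_{n,t}(\sigma_0^\pm)\le\e n^{2/3}$, so Proposition \ref{sigmaprop} (used with $A$ replaced by $\e$) applies, and by Lemma \ref{4.1lem} the same kind of smallness persists, up to an additive error of order $n|\trigamf(\para')-\trigamf(\para)|$, when $\para$ is replaced by any $\para'$ with $|\para'-\para|=O(n^{-1/3})$.

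The core step is to couple the model at $\para$ with the model at a shifted parameter $\para'=\para+bn^{-1/3}$, where $b$ is a large constant fixed after $\e$ (so that the shift moves the characteristic direction by order $bn^{2/3}$). The comparison Lemma, chained across the two inequalities in \eqref{comp1} and telescoped in the space index together with the auxiliary partition functions $Z_{0,k}(t)$ of \eqref{Zdef1.01}--\eqref{Zdef1.02}, yields a one-sided comparison between $\log Z_n^{\para'}(t)$ restricted to $\{\sigma_0>0\}$ and $\log Z_n^{\para}(t)$ restricted to $\{\sigma_0<0\}$, i.e. an inequality of the schematic form $\log Z_n^{\para'}(t)-\log Z_n^{\para}(t)\le (\para'-\para)t-\log Q_{n,t}^{\para'}(\sigma_0>0)+\mathcal R_n$, where the remainder $\mathcal R_n$ is built only from the ``seeds'' of the telescoping. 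Taking expectations and substituting the exact means from \eqref{ElogZ}, the left side becomes $-n(\digamf(\para')-\digamf(\para))+(\para'-\para)t$; a second-order Taylor expansion of $\digamf$ about $\para$, using $|t-n\trigamf(\para)|\le\e n^{2/3}$ and $\para'-\para=bn^{-1/3}$, makes the leading linear terms cancel and leaves $\tfrac12|\trigamf'(\para)|\,b^2 n^{1/3}$ up to $O(\e b\,n^{1/3})$ plus the error from $\mathcal R_n$. Thus, provided the remainder contributes only $o(b^2n^{1/3})$, the coupling produces a strictly positive quantity of order $b^2n^{1/3}$ on the left, which has to be matched by $-\E\log Q_{n,t}^{\para'}(\sigma_0>0)$ plus lower-order terms on the right.

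It remains to show this forces a contradiction. For the shifted model the endpoint $t$ lies well below the $\para'$-characteristic, $E^{\para'}_{n,t}(\sigma_0)=t-n\trigamf(\para')\asymp b n^{2/3}>0$, so combining $E^{\para'}_{n,t}(\sigma_0^+)+E^{\para'}_{n,t}(\sigma_0^-)=\Vvv(\log Z_n^{\para'}(t))$ (bounded above by Lemma \ref{4.1lem} and the hypothesis) with $E^{\para'}_{n,t}(\sigma_0^+)-E^{\para'}_{n,t}(\sigma_0^-)=E^{\para'}_{n,t}(\sigma_0)$ shows $E^{\para'}_{n,t}(\sigma_0^-)=O(n^{2/3})$, so $\sigma_0$ is heavily biased to the positive side under $Q^{\para'}_{n,t}$. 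One then uses Proposition \ref{sigmaprop} (and, where Markov's inequality is powerless at threshold $0$, the sharper quenched tail bounds \eqref{sigmatail1}--\eqref{sigmatail2}, together with a lower bound on $Z_{n,t}^{\para'}(\sigma_0>0)$ read off from the comparison with $Z_{0,n}(t)$, and possibly the reversal symmetry of Lemma \ref{LEMrev} to transfer an estimate to the other boundary) to conclude that $-\E\log Q_{n,t}^{\para'}(\sigma_0>0)=o(b^2n^{1/3})$. Plugging this back contradicts the positive lower estimate of order $b^2n^{1/3}$ once $b$ is taken large enough (depending on $A,\para$) and then $\e$ small enough (depending on $A,\para,b$); this completes the proof and the admissible value of $b$ fixes the constant $C_1=C_1(A,\para)$.

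The step I expect to be the main obstacle is exactly the control of the correction terms in the comparison estimate — on the one hand the ``seed'' remainder $\mathcal R_n$ from the telescoping of \eqref{comp1}, and on the other the quenched probability $-\log Q_{n,t}^{\para'}(\sigma_0>0)$ — because one must rule out that these are as large as $bn^{2/3}$ rather than the needed $o(b^2n^{1/3})$: in particular one has to show that the quenched weight of $\{\sigma_0>0\}$ in the shifted model is not, on a non-negligible set of environments, exponentially small, which is where the finer exit-point tail and moment bounds \eqref{sigmamom}, \eqref{sigmatail1}--\eqref{sigmapmom} (and the structural identities of Section 3) are indispensable. A secondary but unavoidable nuisance is the order in which the several constants are chosen, so that $\tfrac12|\trigamf'(\para)|b^2 n^{1/3}$ genuinely dominates every error term.
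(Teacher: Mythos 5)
Your reduction via Theorem \ref{varthm} (namely $\Vvv(\log Z_n^\para(t))=E^\para_{n,t}|\sigma_0|\ge |t-n\trigamf(\para)|$) is fine, but the engine of your contradiction argument fails on the algebra of the key inequality. The schematic bound you aim for, $\log Z_n^{\para'}(t)-\log Z_n^{\para}(t)\le (\para'-\para)t-\log Q^{\para'}_{n,t}(\sigma_0>0)+\mathcal R_n$, is indeed available, even with $\mathcal R_n=0$ and without any telescoping of \eqref{comp1}: on $\{0<s_0<t\}$ one has $e^{\para' s_0}\le e^{(\para'-\para)t}e^{\para s_0}$, hence $Z^{\para'}_{n,t}(\sigma_0>0)\le e^{(\para'-\para)t}Z^{\para}_{n,t}$. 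But this inequality is vacuous for your purpose. Taking expectations with \eqref{ElogZ}, the term $(\para'-\para)t$ appears on \emph{both} sides and cancels, leaving $-n\bigl(\digamf(\para')-\digamf(\para)\bigr)\le -\E\log Q^{\para'}_{n,t}(\sigma_0>0)+\E\mathcal R_n$. You cannot use $(\para'-\para)t$ a second time to kill the linear Taylor term $-n(\para'-\para)\trigamf(\para)$: once its copy on the right is accounted for, the left side is $\approx -b\,\trigamf(\para)\,n^{2/3}+\tfrac12|\trigamf'(\para)|b^2n^{1/3}$, i.e.\ large and negative, while the right side is nonnegative up to the remainder. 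The inequality is therefore trivially true, forces nothing about $-\E\log Q^{\para'}_{n,t}(\sigma_0>0)$, and no choice of $b$ and $\e$ yields a contradiction. To make the curvature term $\tfrac12|\trigamf'(\para)|b^2n^{1/3}$ survive one must pay the tilt only at the scale of the actual exit point, i.e.\ gain a factor $e^{-(\para'-\para)u}$ on events $\{\sigma_0\ge u\}$ with $u\asymp bn^{2/3}$ as in Lemma \ref{lem_sigtail}; at threshold $u=0$ there is no gain at all.

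There is a second gap even granting a useful inequality: your plan needs $-\E\log Q^{\para'}_{n,t}(\sigma_0>0)=o(b^2n^{1/3})$, and the cited estimates do not give this. Proposition \ref{sigmaprop} and \eqref{sigmatail1}--\eqref{sigmatail2} bound quenched probabilities from above outside exceptional environment events of probability of order $b^{-3}$; they provide no lower bound on $Q^{\para'}_{n,t}(\sigma_0>0)$, and on the exceptional events $-\log Q^{\para'}_{n,t}(\sigma_0>0)$ could well be of order $n^{1/3}$ or larger, contributing order $n^{1/3}$ or more to its expectation. Showing that such quenched weights are not exponentially small except on controllably rare events is precisely the hard content of the paper's argument, carried out in Proposition \ref{LBprop1} and Lemma \ref{LBlemma3} via the comparison lemma applied to a reversed, $n^{-1/3}$-tilted auxiliary system, the dual measure of Lemma \ref{LEMrev}, and Brownian functional estimates; notably the paper never takes expectations of $-\log Q$, working instead with events such as $\{Q(\cdot)\le 1/2\}$. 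The theorem then follows from $\Vvv(\log Z_n^\para(t))=E^\para_{n,t}|\sigma_0|\ge \delta n^{2/3}P^\para_{n,t}(|\sigma_0|\ge\delta n^{2/3})$ together with $\lim_{\delta\searrow0}\varlimsup_{n}P^\para_{n,t}(|\sigma_0|\le\delta n^{2/3})=0$. As written, your proposal presupposes this hard step rather than replacing it.
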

%We use the same strategy as in \cite{sepp-poly}.
The   estimate that gives the theorem is in the next proposition.  

\begin{proposition} Assume that  \eqref{assA} holds with a constant
$A\in\bR_+$.
  Then there exist
finite positive $\para$-dependent 
constants $C(\para), c(\para), D(\para) $ so that,  if  $0<\delta\le 1$ and $K\ge 1$ 
satisfy \[  D(\para)(A+1)\delta^{1/2}\le K\le c(\para)(A+1)^{-4}\delta^{-1/2}, \] 
then 
\be \nn   \varlimsup_{n\to \infty} \P\bigl(Q^\para_{n,t}(0< \sigma_0\le \delta n^{2/3})>e^{-K n^{1/3} \sqrt{\delta}}\,\bigr)\le C(\para)(e^{-K^2/16}+ K^{3/4} \delta ^{3/8}).
\ee
\label{LBprop1}\end{proposition}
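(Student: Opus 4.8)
The plan is to bound $Q^\para_{n,t}(0<\sigma_0\le\delta n^{2/3})=Z^\para_{n,t}(0<\sigma_0\le\delta n^{2/3})/Z^\para_n(t)$ by controlling its numerator and denominator separately on a single event of $\P$-probability at least $1-C(\para)(e^{-K^2/16}+K^{3/4}\delta^{3/8})$. By \eqref{ZB3} the numerator is $\int_0^{\delta n^{2/3}}e^{-B(s)+\para s}Z_{1,n}(s,t)\,ds$ and the denominator is at least $Z^\para_{n,t}(\sigma_0>0)=\int_0^t e^{-B(s)+\para s}Z_{1,n}(s,t)\,ds$ and also at least the axis contribution $Z^\para_{n,t}(\sigma_0<0)=\sum_{j=1}^n Z^\para_j(0)Z_{j,n}(0,t)$. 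So the point is that this much mass cannot accumulate in the short window $[0,\delta n^{2/3}]$.

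The engine, as in \cite{cato-groe-06} and \cite{sepp-poly}, is a perturbation of the boundary parameter. The boundary weight $e^{\para s}$ grows over the window by the enormous factor $e^{\para\delta n^{2/3}}$, so it cannot simply be dropped; it has to be made to cancel the decay in $s$ of $Z_{1,n}(s,t)$. Replacing $\para$ by $\lambda=\para-\rho n^{-1/3}$ with $\rho$ of order $K\delta^{-1/2}$ and using $e^{\para s}\le e^{\rho\delta n^{1/3}}e^{\lambda s}$ on $[0,\delta n^{2/3}]$ turns the window restriction into the moderate factor $e^{\rho\delta n^{1/3}}\asymp e^{CK\sqrt\delta\,n^{1/3}}$, at the cost of introducing the $\lambda$-model, which now lies far off its characteristic direction for the endpoint $(n,t)$: by \eqref{assA}, $E^\lambda_{n,t}\sigma_0=t-n\trigamf(\lambda)$ is a large negative multiple of $n^{2/3}$. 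One then estimates the resulting $\lambda$-quantities using the tools already available — the variance bound $\Vvv(\log Z^\mu_n(t))\le Cn^{2/3}$ of Theorem \ref{UBthm} (which holds for the $\lambda$-model since \eqref{assA} holds for it with an enlarged constant), Lemma \ref{4.1lem} for comparing the two variances, the mean formula \eqref{ElogZ}, and the exit-point tail bounds of Proposition \ref{sigmaprop}. A Chebyshev estimate for the fluctuations of $\log Z^\lambda_n(t)-\log Z^\para_n(t)$ at the scale $K\sqrt\delta\,n^{1/3}$ produces (after optimizing a free exponent) the term $K^{3/4}\delta^{3/8}=(K\sqrt\delta)^{3/4}$, while the mean of that difference, which by Taylor expansion is $\tfrac12\rho^2|\trigamf'(\para)|n^{1/3}(1+o(1))+O(\rho An^{1/3})$, must be dominated by the gain — this is exactly what forces the lower constraint $K\ge D(\para)(A+1)\delta^{1/2}$, whereas the upper constraint $K\le c(\para)(A+1)^{-4}\delta^{-1/2}$ is what keeps the Taylor remainders and Chebyshev bounds in the range where these powers are correct.

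The remaining $e^{-K^2/16}$ comes from the one genuinely Gaussian input: by the reflection principle the boundary motion $B$ (or $B+B_1$) deviates over $[0,\delta n^{2/3}]$ by more than $\tfrac14K\sqrt\delta\,n^{1/3}$ with probability at most $2e^{-K^2/16}$; off that event $e^{-B(\cdot)}$ on the window is pinned between $e^{\mp(K/4)\sqrt\delta n^{1/3}}$, and a matching lower bound for the axis part $Z^\para_{n,t}(\sigma_0<0)$ of the denominator keeps all the estimates in force. Assembling the three contributions and taking $\varlimsup_{n\to\infty}$ to discard the $o(1)$ terms gives the statement.

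The main obstacle is precisely the cancellation in paragraph two: one must absorb the factor $e^{\para\delta n^{2/3}}$ from the boundary weight on the window without ever incurring a net positive exponent. A naive bound — estimating $e^{\para s}$ and $Z_{1,n}(s,t)$ separately, or bounding $Z^\lambda_{n,t}(0<\sigma_0\le\delta n^{2/3})$ by $Z^\lambda_n(t)$ — only yields $e^{O(\sqrt\delta n^{1/3})}$ with no sign control, which is worthless since the quenched probability is already at most $1$. Extracting the true cancellation between $e^{\para s}$ and the $s$-decay of $Z_{1,n}(s,t)$ (here via the shift to the off-characteristic parameter $\lambda$ and the exit-point estimates there), and then steering the several error contributions so that they combine to exactly $e^{-K^2/16}+K^{3/4}\delta^{3/8}$ under the stated two-sided bound on $K$, is where the length of the proof comes from.
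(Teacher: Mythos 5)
There is a genuine gap, and it is in the central mechanism. The quenched probability $Q^\para_{n,t}(0<\sigma_0\le\delta n^{2/3})$ is a ratio of random masses, so showing it is at most $e^{-K\sqrt\delta\,n^{1/3}}$ with high probability requires \emph{two} high-probability statements relative to a common yardstick: an upper bound on the window mass \emph{and} a lower bound on the mass outside the window. Your proposal only attempts the first. The tilting-plus-Chebyshev engine you describe cannot deliver even that half: with $\lambda=\para-\rho n^{-1/3}$ and $\rho\asymp K\delta^{-1/2}$ you get $Q^\para_{n,t}(0<\sigma_0\le\delta n^{2/3})\le e^{\rho\delta n^{1/3}}\,Z^\lambda_n(t)/Z^\para_n(t)$, and \emph{both} factors are adverse: the paid factor is $e^{+K\sqrt\delta\,n^{1/3}}$, and by \eqref{ElogZ} and Taylor expansion the mean of $\log Z^\lambda_n(t)-\log Z^\para_n(t)$ is $+\tfrac12|\trigamf'(\para)|\rho^2n^{1/3}+O(A\rho n^{1/3})\asymp K^2\delta^{-1}n^{1/3}$, which dwarfs $K\sqrt\delta\,n^{1/3}$ for every $K\ge1\ge\delta$ — there is no admissible regime in which it is "dominated by the gain," because unlike in Lemma \ref{lem_sigtail} (where the window sits at $s\ge u$ and the tilt buys $e^{-bu^2/n}$) the window here sits at the origin and the tilt buys nothing. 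Consequently a variance/Chebyshev bound shows the desired inequality holds only on a \emph{rare} downward-deviation event of $\log Z^\lambda-\log Z^\para$, which is the opposite of what the proposition asserts; and the claimed emergence of $K^{3/4}\delta^{3/8}$ from an "optimized" Chebyshev estimate does not check out numerically either (variance $\lesssim(1+\rho)n^{2/3}$ against scale $K^2\delta n^{2/3}$ gives $K^{-1}\delta^{-3/2}$, which blows up as $\delta\to0$). The sentence about "a matching lower bound for the axis part of the denominator" is where the second, indispensable half should live, but no mechanism is given.

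For comparison, the paper normalizes both pieces by $Z_{1,n}(0,t)$ and splits into \eqref{LB1} (with high probability $Z^\para_{n,t}(\sigma_0>u)/Z_{1,n}(0,t)\ge e^{2K\sqrt\delta\,n^{1/3}}$) and \eqref{LB2} (with high probability $Z^\para_{n,t}(0<\sigma_0\le u)/Z_{1,n}(0,t)\le e^{K\sqrt\delta\,n^{1/3}}$). The cancellation between $e^{\para s}$ and the decay of $Z_{1,n}(s,t)$ — which you correctly identify as the main obstacle — is resolved not by tilting the boundary weight but by the comparison inequalities \eqref{comp2} applied in the reversed environment (Theorem \ref{revthm}, Lemma \ref{LEMrev}), which sandwich $Z_{1,n}(s,t)/Z_{1,n}(0,t)$ between stationary-model quantities with tilted parameters; the resulting errors are controlled by the dual exit-point tails \eqref{sigmatail1} and Gaussian supremum estimates. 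The term $K^{3/4}\delta^{3/8}$ comes from the cubic exit-tail $a(\delta)^{-3}$ with the deliberately \emph{small} tilt $a(\delta)=K^{-1/4}\delta^{-1/8}$ in the \eqref{LB1} step, together with the fact that $\sup_{0\le s\le1}(B(s)-s)$ has a bounded density at $0$; the $e^{-K^2/16}$ comes from the Gaussian sup bound in Lemma \ref{LBlemma3}, whose tilt $r=K/(4\sqrt\delta)$ is the only place your $\rho\asymp K\delta^{-1/2}$ intuition matches the actual argument, and whose hypothesis $b/a\ge C_1(\para)(A+1)$ (not a mean-versus-gain comparison) is what produces the lower constraint $K\ge D(\para)(A+1)\delta^{1/2}$.
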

%\rem{Does it make sense to simplify (or weaken) the statement and/or the conditions?}
\begin{remark}
As a corollary we   get the following more general statement. Fix $x\in\bR$ and assume
that 
 \be  D(\para)(A+\abs{x}+1)\delta^{1/2}\le K\le c(\para)(A+\abs{x}+1)^{-4}\delta^{-1/2}. 
 \label{LBass9}\ee
Then
\be  \varlimsup_{n\to \infty} \P(Q^\para_{n,t}(x n^{2/3}< \sigma_0\le (x+\delta) n^{2/3})>e^{-K n^{1/3} \sqrt{\delta}})\le C(e^{-K^2/16}+ K^{3/4} \delta ^{3/8}).
\ee
This follows because by the translation invariance   \eqref{altQ2}
\be\nn 
Q^\para_{n,t}(x n^{2/3}< \sigma_0\le (\delta+x) n^{2/3})\eqd Q^\para_{n,t-x n^{2/3}}(0< \sigma_0\le \delta n^{2/3})
\ee 
and  $\abs{t-xn^{2/3}-n \trigamf(\para)}\le (A+\abs{x}) n^{2/3}$.
In particular, with $x=-\delta$, we get the  matching estimate for $\sigma_0^-$, and we can
combine the estimates for $\sigma^\pm_0$:  under assumptions \eqref{assA} and \eqref{LBass9} with $x=-\delta$,
\be    \varlimsup_{n\to \infty} \P\bigl(Q^\para_{n,t}(\abs{\sigma_0}\le \delta n^{2/3})>2e^{-K n^{1/3} \sqrt{\delta}}\,\bigr)\le C(\para)(e^{-K^2/16}+ K^{3/4} \delta ^{3/8}).
\label{LBprop1bound3}\ee
\end{remark}

Before proving 
  Proposition \ref{LBprop1}  let us observe how   Theorem \ref{LBthm} is proved.
Estimate \eqref{LBprop1bound3} gives the annealed limit 
\[
 \lim_{\delta \searrow 0} \varlimsup_{n\to \infty} P^\para_{n,t}(\abs{\sigma_0}\le \delta n^{2/3})=0.
\]
Then by the  variance identity  \eqref{var1}  
\[
\Vvv(\log Z_n^{\para}(t))
=E_{n,t}^\para\bigl(\abs{\sigma_0}\bigr) \ge \delta n^{2/3} P^\para_{n,t}(\abs{\sigma_0}\ge \delta n^{2/3}) \]
and  Theorem \ref{LBthm} follows.  

\begin{proof}[Proof of Proposition \ref{LBprop1}]
Set $u=\delta n^{2/3}, \upsilon(\delta)=K \sqrt{\delta}$ and begin by writing
\begin{align}
&\P(Q^\para_{n,t}(0< \sigma_0\le \delta n^{2/3})>e^{-n^{1/3} \upsilon(\delta)})=\P\left(\frac{Q^\para( \sigma_0>u\textup{ or } \sigma_0<0)}{Q^\para( 0< \sigma_0 \le u)}<e^{n^{1/3} \upsilon(\delta)}-1\right)\\
&\qquad\qquad 
=\P\left(\frac{Z^\para_{n,t}( \sigma_0>u \textup{ or } \sigma_0<0)}{Z^\para_{n,t}( 0< \sigma_0 \le u)}<e^{n^{1/3} \upsilon(\delta)}-1\right)\nn \\
&\qquad\qquad 
=\P\left(\frac{Z^\para_{n,t}( \sigma_0>u\textup{ or } \sigma_0<0)}{Z^\para_{n,t}( 0< \sigma_0 \le u)}\cdot\frac{Z_{1,n}(0,t)}{Z_{1,n}(0,t)}<e^{n^{1/3} \upsilon(\delta)}-1\right)\nn .
\end{align}
Split the last probability to get
\begin{align}
&\P(Q^\para_{n,t}(0< \sigma_0\le\delta n^{2/3})>e^{-n^{1/3} \upsilon(\delta)}) 
\ \le \ \P\left(\frac{Z^\para_{n,t}( \sigma_0>u\textup{ or } \sigma_0<0)}{Z_{1,n}(0,t)}<e^{2 n^{1/3} \upsilon(\delta)}\right)\nn\\[4pt]
&\qquad\qquad\qquad \qquad\qquad 
 +\P\left(\frac{Z^\para_{n,t}( 0< \sigma_0\le u)}{Z_{1,n}(0,t)}> e^{n^{1/3}  \upsilon(\delta)} \frac1{1-e^{-n^{1/3} \upsilon(\delta)}}\right)\nn\\[4pt]
&\qquad\qquad
 \le \ \P\left(\frac{Z^\para_{n,t}( \sigma_0>u)}{Z_{1,n}(0,t)}<e^{2 n^{1/3} \upsilon(\delta)}\right)
\label{LB1}\\[4pt]
&\qquad\qquad \qquad \qquad\qquad
+ \ \P\left(\frac{Z^\para_{n,t}( 0< \sigma_0\le u)}{Z_{1,n}(0,t)}> e^{n^{1/3}  \upsilon(\delta)}\right). \label{LB2}
\end{align}
We   bound    probabilities in (\ref{LB1}) and (\ref{LB2})  separately.

\medskip
\noindent \textbf{The term (\ref{LB1}).} 
\be \nn 
\frac{Z^\para_{n,t}( \sigma_0>u)}{Z_{1,n}(0,t)}=\int_u^t \exp(-B(s)+\para s) \frac{Z_{1,n}(s,t)}{Z_{1,n}(0,t)} ds.
\ee 
Construct a new environment $\tilde\om$  with 
\be\nn 
\tilde B(s)=-(B_n(t)-B_n(t-s)), \quad \tilde B_i(s)=B_{n-i}(t)-B_{n-i}(t-s), \quad 1\le i \le n-1, 
\ee
and take a new parameter $\lambda=\para+a(\delta) n^{-1/3}$ where $a(\delta)=K^{-1/4} \delta^{-1/8}$.
Quantities  that use environment $\tilde\om$ are marked with a tilde. 
From the definitions one checks that 
\be
Z_{1,n}(s,t)=\tilde Z_{0,n-1}(0,t-s) \quad\text{ for any $t>0$ and  $s\in(-\infty,t)$.} 
\label{ZZtil}\ee
Use  (\ref{comp2}) for the new system to get
\bea \nn 
\frac{Z_{1,n}(s,t)}{Z_{1,n}(0,t)}&=&\frac{\tilde Z_{0,n-1}(0,t-s)}{\tilde Z_{0,n-1}(0,t)}\ge \frac{\tZ_{n-1,t-s}^{\lambda}(\sigma_0>0)}{\tZ_{n-1,t}^{\lambda}(\sigma_0>0)}=\frac{\tilde Q^\lambda_{n-1,t-s}(\sigma_0>0) \tZ_{n-1,t-s}^{\lambda}}{\tilde Q^\lambda_{n-1,t}(\sigma_0>0)\tZ_{n-1,t}^{\lambda}}\\
\nn &\ge&\tilde Q^\lambda_{n-1,t-s}(\sigma_0>0)\frac{ \tZ_{n-1,t-s}^{\lambda}}{\tZ_{n-1,t}^{\lambda}}=
\tilde Q^\lambda_{n-1,t-s}(\sigma_0>0) \exp(\tilde Y_{n-1}(t-s,t)-\lambda s).
\eea 
Thus, denoting   the probability in \eqref{LB1}  by $p_1$, 
\begin{align*}
p_1&=\P\left(\frac{Z^\para_{n,t}( \sigma_0>u)}{Z_{1,n}(0,t)}<e^{2 n^{1/3} \upsilon(\delta)}\right)\\ 
&\le 
\P\left(
\int_u^t  e^{-B(s)+\tilde Y_{n-1}(t-s,t)+(\para-\lambda) s}
%\exp(-B(s)+\tilde Y_{n-1}(t-s,t)+(\para-\lambda) s)
\,\tilde Q^\lambda_{n-1,t-s}(\sigma_0>0) \,ds<e^{2 n^{1/3} \upsilon(\delta)}
\right) \\
&=  \P\left(
\int_u^t  e^{-B(s)+\tilde Y^*_{n-1}(t-s,t)+(\para-\lambda) s}
%\exp(-B(s)+\tilde Y^*_{n-1}(t-s,t)+(\para-\lambda) s)
\,\tilde Q^{\lambda,*}_{n-1}(t-s-\sigma^*_{n-1}>0) \,ds<e^{2 n^{1/3} \upsilon(\delta)}
\right).   
\end{align*}
On the last line above we applied the $*$ transformation to the $\tilde \omega$ system and Lemma \ref{LEMrev} 
to replace the measure $\tilde Q^{\lambda, \tilde \om^*}$ with the dual measure $\tilde Q^{\lambda,*}$.  

 Set 
\[
\bar u=n\trigamf(\para)-(n-1)\trigamf(\lambda). %\ge a(\delta) n^{2/3} \abs{\trigamf'(\lambda)}.
\]
Given any $\delta$ and $K$,  there exists a constant $c_0(\para)>0$ such that 
   $\bar u/2\ge c_0(\para)a(\delta)n^{2/3}$  for large enough $n$. 
Furthermore, from the hypothesis on    $\delta$ and $K$ it follows that 
  $\bar u/2\ge u$ and $\bar u/2\ge 3An^{2/3}$. 
Restrict the integration inside the probability and decompose  again:   
\begin{align}
p_1&\le \P\left(
\int_u^{\bar u/2}   e^{-B(s)+\tilde Y^*_{n-1}(t-s,t)+(\para-\lambda) s}
%\exp(-B(s)+\tilde Y^*_{n-1}(t-s,t)+(\para-\lambda) s)
\,\tilde Q^{\lambda,*}_{n-1}(t-\sigma^*_{n-1}>\bar u/2) \,ds<e^{2 n^{1/3} \upsilon(\delta)}
\right)\nn\\
&\label{eq:lower0}\le \P\left(\tilde Q^{\lambda,*}_{n-1}(t-\sigma^*_{n-1}>\bar u/2)\le 1/2\right)\\
&\label{eq:lower1} \qquad +\;
\P\left(
\int_u^{\bar u/2} \exp(-B(s)+\tilde Y^*_{n-1}(t-s,t)+(\para-\lambda) s) ds<2e^{2 n^{1/3} \upsilon(\delta)}
\right). 
\end{align}
For probability (\ref{eq:lower0}) switch to complements, apply Lemma \ref{LEMrev} again,
and then  the upper bound (\ref{sigmatail1}):
\be\begin{aligned}
 &\P\left(\tilde Q^{\lambda,*}_{n-1}(t-\sigma^*_{n-1}>\bar u/2)\le  1/2\right)
 = \P\left(\tilde Q^{\lambda,*}_{n-1}(t-\bar u-\sigma^*_{n-1}\le -\bar u/2)> 1/2\right)\\
&\qquad   =\P\left( \tilde Q^\lambda_{n-1,t-\bar u}(\sigma_0 \le -\bar u/2) >1/2\right)
 \le C(\para) a(\delta)^{-3}=C(\para)K^{3/4}\delta^{3/8}. \end{aligned}\label{eq:lower2}\ee
On the last line above assumption \eqref{assA} continues to be valid with the 
same constant $A$ because 
$
\abs{t-\bar u -(n-1)\trigamf(\lambda)}=\abs{t-n\trigamf(\para)}\le An^{2/3}.  
$
Property $\bar u/2\ge 3An^{2/3}$ is the assumption needed for (\ref{sigmatail1}).  
 
%Using Markov again:
%\[
%\P\left(\tilde Q^*_{n-1}(t-\bar u-\sigma^*_{n-1}\le -\bar u/2)>\frac12\right)\le 2 C(\para) a^{-1} n^{-2/3}\E E^{\tilde Q_{n-1,t-\bar u}^\lambda}\left[\sigma_0^{-}\right].
%\]
%By  Theorem \ref{UBthm} and   identity (\ref{var1}) we have
%\[
%\E E^{\tilde Q_{n-1,t-\bar u}^\lambda}\left[\sigma_0^{-}\right]\le C(\para) n^{2/3}
%\]
%which gives the upper bound
%\[
%\P\left(\tilde Q^*_{n-1}(t-\bar u-\sigma^*_{n-1}\le -\bar u/2)>\frac12\right)\le C(\para) a^{-1}.
%\]

%\textcolor{red}{This can be strengthened to $C a^{-3}$ using Lemma \ref{lem_sigtail}. }

Next we  estimate probability (\ref{eq:lower1}). 
Process $s\mapsto \tilde Y^*_{n-1}(t-s,t)$  is 
  a standard Brownian motion, and   independent of $B$ because
$\tilde Y^*$ was constructed from the new environment $\tilde\om$.   Define another 
standard Brownian motion  
\be\nonumber B^\dagger (s)= 2^{-1/2}n^{-1/3}  \bigl(-B(n^{2/3}s)+\tilde Y^*_{n-1}(t-n^{2/3}s,t)\bigr).\ee 
Then  probability \eqref{eq:lower1} equals  
\begin{align}
&\P\left(
\int_u^{\bar u/2} \exp( \sqrt{2}n^{1/3}B^\dagger(n^{-2/3}s)+(\para-\lambda) s) \,ds<2e^{2 n^{1/3} \upsilon(\delta)}
\right)\nn\\ 
&\qquad \le 
\P\left(n^{2/3}
\int_\delta^{c_0(\para)a(\delta)} \exp(\sqrt{2} n^{1/3} B^{\dagger}(s)-a(\delta) n^{1/3} s) \,ds<2e^{2 n^{1/3} \upsilon(\delta)}
\right)\nn\\
&\qquad =
\P\left(n^{-1/3} \log
\int_\delta^{c_0(\para)a(\delta)}  e^{\sqrt{2} n^{1/3} B^{\dagger}(s)-a(\delta) n^{1/3} s}
%\exp(\sqrt{2} n^{1/3} B^{\dagger}(s)-a(\delta) n^{1/3} s) 
\,ds\le2 \upsilon(\delta) +n^{-1/3} \log(2n^{-2/3})
\right)  \nn\label{eq:lower2}
\end{align}
 As $n\to\infty$ the probability on the last line above converges to
\[
\P\left(\sup_{\delta\le s\le c_0(\para)a(\delta)}(
\sqrt{2} B^{\dagger}(s)-a(\delta)  s) \le2 \upsilon(\delta) 
\right).
\]
 Introduce one more   Brownian motion  
$B(s)= B^{\dagger}(\delta+s)-B^{\dagger}(\delta)$. 
Abbreviate temporarily  $\tau=a(\delta)^2 (c_0(\para)a(\delta)-\delta)/\sqrt 2 >1$
where the inequality is a consequence of the assumption on $\delta$ and $K$.  
 Then the 
probability above is  
\begin{align*}
&\le \P\bigl(\sqrt{2} B^{\dagger}(\delta)<- \upsilon(\delta)\bigr)
+\P\left(\sup_{0\le s\le c_0(\para)a(\delta)-\delta}(
\sqrt{2} B(s)-a(\delta)  s) \le 3 \upsilon(\delta)+\delta a(\delta) 
\right)\nn\\ \nn
&\le  e^{- \frac14\upsilon(\delta)^2 \delta^{-1}}
+\P\left(\sup_{0\le s\le  \tau}(
 B(s)-  s) \le 2^{-1/2}a(\delta)(3 \upsilon(\delta)+\delta a(\delta) )
\right)\\
&\le  e^{- \frac14\upsilon(\delta)^2 \delta^{-1}}
+   \P\left(\sup_{0\le s\le 1}(
 B(s)-  s) \le 2^{-1/2}a(\delta)(3 \upsilon(\delta)+\delta a(\delta))
\right)\\[3pt]
&\le e^{- \frac14\upsilon(\delta)^2 \delta^{-1}}
+  C a(\delta)(3\upsilon(\delta)+\delta a(\delta) ). 
\end{align*}
The last inequality comes because  $\sup_{0\le s\le 1} ( B(s)-  s)$ is a.s.~positive with a bounded density function. Including the estimate from \eqref{eq:lower2}  we get
\be \label{eq:LB1}
\varlimsup_{n\to \infty} \, (\ref{LB1})\le  C(\para) (e^{- K^2/4  }+ K^{3/4}\delta^{3/8} 
 +  K^{-1/2}\delta^{3/4}  ) \le   C(\para)(e^{- K^2/4  }+ K^{3/4}\delta^{3/8}) .
\ee 
{In the last inequality we used $\delta\le 1\le K$.}

\medskip
\noindent \textbf{The term (\ref{LB2}).} 

For  probability  (\ref{LB2}) we separate the argument into a lemma because the
same estimate will be needed again, though with different parameters. 

\begin{lemma} Assume \eqref{assA} with the constant $A$ and let $\aaa,\bbb, \kappa>0$.  
Then there exist finite, positive  
constants $C(\para)$, $C_1(\para)$ and  $n_0(a,b,\kappa,\para)$ such that, if $b/a\ge C_1(\para)(A+1)$, then 
for $n\ge n_0(a,b, \kappa, \para)$
\[ \P\left(\frac{Z^\para_{n,t}( 0< \sigma^\pm_0\le \aaa n^{2/3})}{Z_{1,n}(0,t)}\ge \kappa e^{n^{1/3} \bbb}\right)
\le  
C(\para) \bigl( a^3b^{-3} + \exp(-b^2a^{-1}/16) \bigr).  
  \]
\label{LBlemma3}\end{lemma}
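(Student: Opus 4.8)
The plan is to bound the probability that the ratio $Z^\para_{n,t}(0<\sigma_0^\pm\le a n^{2/3})/Z_{1,n}(0,t)$ is large by first expressing this ratio as an integral and then converting it to a quantity controlled by the $\sigma_0$-tail bounds already established in Proposition \ref{sigmaprop}. Concretely, by \eqref{ZB2} (or the decomposition in \eqref{ZB3}) one has
\[
\frac{Z^\para_{n,t}(0<\sigma_0\le a n^{2/3})}{Z_{1,n}(0,t)}=\int_0^{a n^{2/3}} e^{-B(s)+\para s}\,\frac{Z_{1,n}(s,t)}{Z_{1,n}(0,t)}\,ds.
\]
As in the treatment of term \eqref{LB1}, I would pass to the reversed environment $\tilde\om$ using \eqref{ZZtil}, $Z_{1,n}(s,t)=\tilde Z_{0,n-1}(0,t-s)$, and then apply the comparison lemma \eqref{comp2} with a shifted parameter $\lambda=\para\pm(\text{small})\,n^{-1/3}$. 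This time the inequality runs the other way: to get an \emph{upper} bound on the ratio $Z_{1,n}(s,t)/Z_{1,n}(0,t)$ I use the second inequality of \eqref{comp2}, giving a bound of the form $\tilde Q^\lambda_{n-1,t-s}(\sigma_0<0)^{-1}\exp(\tilde Y_{n-1}(t-s,t)-\lambda s)$ or an analogous expression. So the integral is dominated by $\int_0^{a n^{2/3}} e^{-B(s)+\tilde Y_{n-1}(t-s,t)+(\para-\lambda)s}\,ds$ times a correction factor coming from $\tilde Q^\lambda_{n-1}$ being bounded below on the relevant event (handled exactly as in \eqref{eq:lower2} via \eqref{sigmatail1}, contributing the $a^3 b^{-3}$ term).

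The heart of the estimate is then the Brownian integral. The combination $-B(s)+\tilde Y_{n-1}(t-s,t)$ is, after the diffusive rescaling $s=n^{2/3}r$ and dividing by $n^{1/3}$, a rescaled standard Brownian motion $B^\dagger$; the drift term $(\para-\lambda)s$ contributes a linear drift of size $O(n^{1/3})$ on the rescaled interval $r\in[0,a]$. Thus
\[
\P\!\left(\int_0^{a n^{2/3}} e^{-B(s)+\tilde Y_{n-1}(t-s,t)+(\para-\lambda)s}\,ds\ge \kappa e^{n^{1/3}b}\right)
\le \P\!\left(n^{2/3}\!\int_0^{a} e^{\sqrt2\,n^{1/3}B^\dagger(r)+(\para-\lambda)n^{2/3}r}\,dr\ge \kappa e^{n^{1/3}b}\right),
\]
and since the integrand is at most $\exp(\sqrt2\,n^{1/3}\sup_{[0,a]}B^\dagger)$ times the length $a$, the event forces $\sqrt2\sup_{[0,a]}B^\dagger(r)\ge b-o(1)$ (absorbing $n^{-1/3}\log(\kappa a n^{2/3})$ into a negligible error once $n\ge n_0(a,b,\kappa,\para)$, which is where the $n_0$ in the statement enters). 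The reflection principle then gives $\P(\sup_{[0,a]}B^\dagger\ge b/\sqrt2\,(1-o(1)))\le 2\exp(-b^2/(4a)(1-o(1)))\le C\exp(-b^2a^{-1}/16)$, which is the second term in the conclusion. The hypothesis $b/a\ge C_1(\para)(A+1)$ is exactly what is needed to ensure the comparison parameter $\lambda$ and the various rescaled intervals obey the constraints of \eqref{sigmatail1} and to make the drift and logarithmic corrections subdominant.

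The main obstacle, and the step I would be most careful about, is keeping track of the several small parameters simultaneously: the shift $\lambda-\para$ must be chosen small enough for the Taylor bound on $\digamf,\trigamf$ and for assumption \eqref{assA} to persist for the shifted system (so that \eqref{sigmatail1} applies to $\tilde Q^\lambda_{n-1}$), yet the drift $(\para-\lambda)s$ should not overwhelm the supremum bound; and the threshold $b$ must dominate $a$ enough that the event genuinely constrains the Brownian supremum rather than being satisfied trivially. The $\sigma_0^-$ case is symmetric: one uses the other inequality in \eqref{comp2} together with the reversal Lemma \ref{LEMrev} and the $\sigma_0^-$ half of \eqref{sigmatail1}, and the Brownian argument is identical.
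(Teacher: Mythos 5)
Your route is the paper's: write the ratio as $\int_0^{an^{2/3}}e^{-B(s)+\para s}\,Z_{1,n}(s,t)/Z_{1,n}(0,t)\,ds$, reverse the environment via \eqref{ZZtil}, compare with a tilted boundary model through \eqref{comp2} (the correct denominator is $\tilde Q^{\lambda}_{n-1,t}(\sigma_0<0)$ at time $t$, after bounding $\tilde Q^{\lambda}_{n-1,t-s}(\sigma_0<0)\le1$), split off the event that $\tilde Q^{\lambda}_{n-1,t}(\sigma_0<0)\le\tfrac12$ (handled by a shift and \eqref{sigmatail1}, giving the $a^3b^{-3}$ term), and estimate the remaining exponential functional of Brownian motion, with $n_0$ absorbing the $\log(\kappa n^{-2/3})$ correction. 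However, there is a genuine gap at the quantitative heart: you never fix the tilt, and your two halves are inconsistent as sketched. To make the $\tilde Q^{\lambda}$ term produce $C(\para)(a/b)^3$ you are forced to take $\lambda=\para-rn^{-1/3}$ with $r\asymp b/a$ (the paper takes $r=b/(4a)$): since $\trigamf$ is decreasing, only $\lambda<\para$ makes $\bar u=(n-1)\trigamf(\lambda)-n\trigamf(\para)$ positive of order $rn^{2/3}$, which is what allows the shift by $\bar u$ and the application of \eqref{sigmatail1}; the hypothesis $b/a\ge C_1(\para)(A+1)$ is exactly what guarantees $\bar u\ge 3An^{2/3}$ there. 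With this forced sign, the drift in the exponential integral is $+(\para-\lambda)s$, i.e.\ after rescaling $+rn^{1/3}s$ on $[0,a]$, contributing $ra\,n^{1/3}=\tfrac b4 n^{1/3}$ to the exponent. Hence the event does \emph{not} force $\sqrt2\,\sup_{[0,a]}B^\dagger\ge b-o(1)$ as your display claims (that would require $\lambda\ge\para$, in which case the $\tilde Q^{\lambda}$ step collapses because the tilted characteristic direction then pushes $\sigma_0$ positive); it forces only $\sqrt2\,\sup_{[0,a]}B^\dagger\ge \tfrac34 b-ra=\tfrac b2$, whose Gaussian tail is $C\exp(-b^2/(16a))$. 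The constant $1/16$ in the statement is thus not slack to be given away from a stronger $\exp(-b^2/(4a))$ bound; it is produced by the balancing choice $r=b/(4a)$ that plays the two error terms against each other.

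You do flag this tension ("the drift should not overwhelm the supremum bound"), but flagging is not resolving: the missing step is precisely the choice $r=b/(4a)$ together with carrying the resulting drift through the reflection-principle estimate. Once that is done, the $\sigma_0^-$ case goes as you say, with $\lambda=\para+rn^{-1/3}$ and the other inequality of \eqref{comp2}, matching the paper.
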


%% definitions for this lemma
%\def\aaa{a}  \def\bbb{b} 

Before proving the lemma let us use it to conclude the proof of Proposition \ref{LBprop1}. 
In Lemma \ref{LBlemma3}  take $\kappa=1$, $\aaa=\delta$ and $\bbb=K\sqrt\delta$.
 Then for large enough $n$, 
\be 
 \text{probability (\ref{LB2}) } \le C(\para) (\delta^{3/2}K^{-3}+ e^{-K^2/16}).
\label{eq:LB2}\ee

Combine (\ref{eq:LB1}) and  (\ref{eq:LB2})  with $\delta\le 1\le K$, and we have
\bea\nn 
 \varlimsup_{n\to \infty} \P(Q^\para_{n,t}(0< \sigma_0\le \delta n^{2/3})>e^{-n^{1/3} \upsilon(\delta)}) 
  \le   C(\para) ( e^{-K^2/16}+  K^{3/4} \delta ^{3/8}   )
\nn
\eea 
and  the proposition is proved.
 \end{proof}

\begin{proof}[Proof of Lemma \ref{LBlemma3}]
  We do the case of $\sigma^+_0$ in full detail.  Abbreviate $u=\aaa n^{2/3}$.  
Introduce the new environment $\tilde \omega$ as before, and a new parameter
  $\lambda=\para-r  n^{-1/3}$ with $r =b/(4a)$.  
 We must restrict $n$ large enough so that for example $rn^{-1/3}<\para/2$ so that
 $\lambda$ is a legitimate parameter.   
  
Begin with \eqref{ZZtil}   and then apply   comparison  \eqref{comp2}: 
\begin{align*}
\frac{Z_{1,n}(s,t)}{Z_{1,n}(0,t)}& =\frac{\tilde Z_{0,n-1}(0,t-s)}{\tilde Z_{0,n-1}(0,t)} \le \frac{\tZ_{n-1,t-s}^{\lambda}(\sigma_0<0)}{\tZ_{n-1,t}^{\lambda}(\sigma_0<0)}=
\frac{\tZ_{n-1,t-s}^{\lambda}}{\tZ_{n-1,t}^{\lambda}}\cdot \frac{\tilde Q^{\lambda}_{n-1,t-s}(\sigma_0<0)}{\tilde Q^{\lambda}_{n-1,t}(\sigma_0<0)}\\
&\le  \exp\bigl(\tilde Y_{n-1}(t-s,t)-\lambda s\bigr)\cdot \frac{1}{\tilde Q^{\lambda}_{n-1,t}(\sigma_0<0)}. 
\end{align*}
Substitute the above bound in the probability that is to be bounded:   
\bea \nn&&
\P\left(\frac{Z^\para_{n,t}( 0< \sigma_0\le u)}{Z_{1,n}(0,t)}\ge\kappa e^{n^{1/3}\bbb }\right)\\
&&\hskip10mm \nn =\P\left(\int_0^u \exp(-B(s)+\para s) \frac{Z_{1,n}(s,t)}{Z_{1,n}(0,t)}\, ds
\ge \kappa  e^{n^{1/3}  b}  \right)\\
&&\hskip10mm \nn \le
\P\left(\int_0^u \frac{ \exp(-B(s)+\tilde Y_{n-1}(t-s,t)+(\para-\lambda) s)  }{\tilde Q^{\lambda}_{n-1,t}(\sigma_0<0)}\,ds\ge\kappa e^{n^{1/3}  \bbb}\right)\\[3pt] 
&&\hskip10mm
\le\P\left(\tilde Q^{\lambda}_{n-1,t}(\sigma_0<0)\le 1/2\right)\label{eq:lower3.9} \\
&&\hskip20mm
+ \ \P\left(\int_0^u \exp(-B(s)+\tilde Y_{n-1}(t-s,t)+(\para-\lambda) s) \, ds\ge \frac\kappa{2}  e^{n^{1/3}  \bbb}\right).\label{eq:lower4}
\eea 

To treat probability (\ref{eq:lower3.9}) set 
$\bar u=  (n-1)\trigamf(\lambda)-n\trigamf(\para)$.  
$\trigamf$ is positive, convex and strictly decreasing, so one can check that 
 $\bar u \ge\tfrac14 \abs{\trigamf'(\para)} r n^{2/3}$ for  all $n\ge 1$ provided $C_1(\para)$ in the hypothesis
is large enough.   Use the shift 
invariance property of $Q$ described in Remark \ref{remark1} and the upper bound (\ref{sigmatail1}):
\be\begin{aligned}
&\P(\tilde Q^{\lambda}_{n-1,t}(\sigma_0<0)<1/2)=\P(\tilde Q^{\lambda}_{n-1,t}(\sigma_0>0)\ge1/2)\\
&\qquad =
%\P(\tilde Q^{\lambda,*}_{n-1}(t-\sigma_{n-1}>0)>1/2)\\\nn&=&
\P(\tilde Q^{\lambda}_{n-1,t+\bar u}(\sigma_0>\bar u)\ge1/2) \le C(\para) r^{-3}
\le C(\para)(a/b)^3. 
 \end{aligned}\label{auxlemma8}\ee
The choice of $\bar u$ makes \eqref{assA} valid again with the same $A$, and  
a large enough $C_1(\para)$  guarantees that $\bar u\ge 3An^{2/3}$ so that 
(\ref{sigmatail1}) can be applied.  
 
For probability (\ref{eq:lower4}), after rescaling the integral and introducing a new Brownian motion, 
\begin{align*}
&\text{(\ref{eq:lower4})}  \le \P\left(n^{2/3}\int_0^\aaa \exp(n^{1/3}(\sqrt{2}B^{\dagger}(s)+r  s))  ds>\frac{\kappa}2  e^{n^{1/3}  \bbb}\right)\\   
&  =\P\left(n^{-1/3}\log\int_0^\aaa \exp(n^{1/3}(\sqrt{2}B^{\dagger}(s)+r  s))  ds> \bbb +n^{-1/3} \log( \kappa n^{-2/3}/2)\right)\\  
&\le  
\P\left(\sup_{0\le s\le \aaa}(
\sqrt{2} B^{\dagger}(s)+r   s) \ge \tfrac34\bbb
\right).
\end{align*}
In the last inequality we took $n$ large enough so that $n^{-1/3} \log( \kappa n^{-2/3}/2)< b/4$.
Via $\sup_{0\le s\le \aaa}
 B^{\dagger}(s)\eqd {\aaa}^{1/2} \abs{B^{\dagger}(1)}$  bound the last probability by
\[
\P\left(\sup_{0\le s\le \aaa}
\sqrt{2} B^{\dagger}(s)\ge \tfrac34b-r  \aaa
\right)\le C\exp\left( -\,\frac1{4\aaa}{( \,\tfrac34b-r  \aaa)^2}\right)
=C\exp\left(-\,\frac{b^2}{16\aaa}\right). 
\]
Combining estimate \eqref{auxlemma8} with above gives the conclusion for $\sigma^+_0$.  

The case of $\sigma^-_0$ goes similarly, with small alterations.
Now $\lambda=\para+rn^{-1/3}$.  Utilizing 
\eqref{ZZtil} and comparison \eqref{comp2}
the ratio is  developed as follows:
\begin{align*}
&\frac{Z^\para_{n,t}( -u\le  \sigma_0<0)}{Z_{1,n}(0,t)} 
=
 \int_{-u}^0 \exp(-B(s)+\para s) \frac{Z_{1,n}(s,t)}{Z_{1,n}(0,t)}\, ds\\
 &\qquad \le  \int_{-u}^0 \frac{ \exp(-B(s)-\tilde Y_{n-1}(t,t-s)-(\para-\lambda) s)  }{\tilde Q^{\lambda}_{n-1,t}(\sigma_0>0)}\,ds.
\end{align*}
The rest follows along the same lines as above. With this we consider
Lemma \ref{LBlemma3}   proved.  
\end{proof}

\section{Fluctuations of the path under boundary conditions}

\begin{theorem}
Assume \eqref{assA} holds,  let $0<\gamma<1$ and assume $b\ge 3(A+1)$.  Then for
$n\ge (1-\gamma)^{-1}$
\be \label{path1}
P(\abs{\sigma_{\lfloor \gamma n \rfloor }-\gamma t}>b n^{2/3})\le C(\para) b^{-3}.
\ee 
%For any $0<\gamma<1, \varepsilon>0$ there exists $\delta>0$ with 
%\[
%\varlimsup_{n\to \infty}\P(\textup{there exists $s$ with } |(s,\sigma(s))-(\gamma t, \lfloor \gamma n\rfloor )|\le \delta n^{2/3})\le \varepsilon
%\]
%\textcolor{red}{
%Maybe we could use this one instead:\\ 
Also, for any $0<\gamma<1, \varepsilon>0$ there exists $\delta>0$ with 
\be\label{path2} 
\varlimsup_{n\to \infty}P(|\sigma_{\lfloor \gamma n \rfloor }-\gamma t|\le \delta n^{2/3})\le \varepsilon.
\ee 
%}
\end{theorem}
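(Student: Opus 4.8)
The plan is to deduce both \eqref{path1} and \eqref{path2} from the exit--point estimates already established, namely Proposition \ref{sigmaprop} for \eqref{path1} and the bound \eqref{LBprop1bound3} for \eqref{path2}, by means of the following distributional reduction: writing $m=\lfloor\gamma n\rfloor$, the $m$-th jump time of the size-$n$ polymer with boundary has, under the annealed measure, the same law as the exit point $\sigma_0$ of the size-$(n-m)$ polymer with boundary. (The case $m=0$, which occurs only for a bounded set of $n$, follows at once from Proposition \ref{sigmaprop} because then $\gamma t$ is bounded; so assume $m\ge 1$.)

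To obtain the reduction, split the simplex integral defining $Z^\para_n(t)$ at the coordinate $s_m=s$. Since the integrand factors into a part depending only on $s_0,\dots,s_{m-1}$ together with the boundary term $-B(s_0)+\para s_0$, and a part depending only on $s_{m+1},\dots,s_{n-1}$, this yields
\be\nn
Q^\para_{n,t}(\sigma_m\in ds)=\frac{Z^\para_m(s)\,Z_{m+1,n}(s,t)}{Z^\para_n(t)}\,ds,\qquad s\in(-\infty,t),
\ee
together with $Z^\para_n(t)=\int_{-\infty}^{t}Z^\para_m(s)\,Z_{m+1,n}(s,t)\,ds$. Now \eqref{Zincr2} gives $Z^\para_m(s)=Z^\para_m(0)\exp\bigl(\para s-\uu_m(0,s)\bigr)$; the factor $Z^\para_m(0)$ cancels from numerator and denominator, so the density depends on $\uu_m$ only through its increments. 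By Theorem \ref{burkethm} applied with $m$ in place of $n$, the process $\uu_m$ is a standard two-sided Brownian motion, and because $\uu_m$ is a function of $B,B_1,\dots,B_m$ it is independent of $B_{m+1},\dots,B_n$. Relabelling $(B_{m+1},\dots,B_n)$ as $(B_1,\dots,B_{n-m})$ and letting $\uu_m(0,\cdot)$ play the role of the boundary Brownian motion $B$, the displayed density is seen to have exactly the annealed law of $Z^\para_0(s)Z_{1,n-m}(s,t)/Z^\para_{n-m}(t)$, i.e.\ of the density of $\sigma_0$ under $Q^\para_{n-m,t}$. Hence $\sigma_{\lfloor\gamma n\rfloor}$ under $P^\para_{n,t}$ has the same law as $\sigma_0$ under $P^\para_{n-m,t}$.

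Next I recenter so that assumption \eqref{assA} applies to the reduced polymer. By the time--translation invariance \eqref{altQ2}, $\sigma_0$ under $Q^\para_{N,t}$ has the same law as $(t-t'')+\sigma_0$ under $Q^\para_{N,t''}$ for every $t''$; taking $N=n-m$ and $t''=(1-\gamma)t$, the identity $\lfloor\gamma n\rfloor=\gamma n+O(1)$ together with $|t-n\trigamf(\para)|\le An^{2/3}$ shows $|t''-N\trigamf(\para)|\le A'N^{2/3}$ for a constant $A'$ controlled by $A$ (and $\para,\gamma$), so the recentered polymer $Q^\para_{N,t''}$ satisfies \eqref{assA}; moreover $t-t''=\gamma t$ exactly, so $\sigma_{\lfloor\gamma n\rfloor}-\gamma t$ under $P^\para_{n,t}$ has the same law as $\sigma_0$ under $P^\para_{N,t''}$. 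Now \eqref{path1} follows by applying the tail bound \eqref{sigmatail} to $P^\para_{N,t''}$, after absorbing the bounded factor $(n/N)^{2/3}$ into the constants (this is where the hypothesis $b\ge 3(A+1)$ is used). For \eqref{path2}, the event $\{|\sigma_{\lfloor\gamma n\rfloor}-\gamma t|\le\delta n^{2/3}\}$ translates, under the reduction, into $\{\sigma_0$ lying in an interval of length of order $\delta N^{2/3}$ whose center is at distance of order $N^{2/3}$ from the characteristic$\}$ for $P^\para_{N,t''}$; applying the shifted version of \eqref{LBprop1bound3} from the Remark after Proposition \ref{LBprop1} bounds the annealed probability of this event by $C(\para)(e^{-K^2/16}+K^{3/4}\delta^{3/8})+e^{-cK\sqrt{\delta}\,n^{1/3}}$, and letting $n\to\infty$ and then choosing $K$ large followed by $\delta$ small gives \eqref{path2}.

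The main obstacle is the distributional reduction of the second paragraph: one must verify carefully that $\uu_m$ is a two-sided standard Brownian motion on all of $\bR$ and that it is independent of $B_{m+1},\dots,B_n$ -- both of which are exactly what Theorem \ref{burkethm} delivers once read with parameter $m$ -- and that after the cancellation of $Z^\para_m(0)$ the density truly depends only on the increments of $\uu_m$, so that the resulting law is precisely that of a fresh size-$(n-m)$ boundary polymer. Once this identity is secured, the remaining steps (the recentering via \eqref{altQ2}, the control of $A'$ and of the scaling factor $(n/N)^{2/3}$, and the invocations of Proposition \ref{sigmaprop} and \eqref{LBprop1bound3}) are routine bookkeeping.
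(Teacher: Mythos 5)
Your proof is correct, but it reaches the key reduction by a genuinely different route than the paper. The paper proves the identity ``$\sigma_{\lfloor\gamma n\rfloor}$ under $Q_{n,t}$ $\eqd$ $\gamma t+\sigma_0$ under $Q_{n-\lfloor\gamma n\rfloor,(1-\gamma)t}$'' by passing to the reversed (dual) measure $Q^*_n$ of Lemma \ref{LEMrev} and checking a consistency property: integrating $\sigma^*_n$ out of the density \eqref{Qstar}, using \eqref{rXY}, reproduces the density of $Q^*_{n-1}$, so the law of $\sigma^*_{n-k}$ does not depend on the chain length, and undoing the reversal gives \eqref{eq:path1}. You instead stay in the forward picture: you marginalize the density \eqref{Qdef2} over the first $m$ and last $n-m-1$ jump times to get $Q^\para_{n,t}(\sigma_m\in ds)=Z^\para_m(s)Z_{m+1,n}(s,t)\,ds/Z^\para_n(t)$, use the increment formula (\eqref{Zincr2}, equivalently \eqref{rB1}, which indeed holds for all real times, including $s<0$, a point worth saying explicitly) to replace $Z^\para_m(s)$ by $Z^\para_m(0)e^{\para s-\uu_m(0,s)}$, cancel $Z^\para_m(0)$, and invoke the Burke property (Theorem \ref{burkethm}, read at level $m$, which gives $\uu_m$ as a two-sided standard Brownian motion independent of $B_{m+1},\dotsc,B_n$, exactly as in the proof of Theorem \ref{revthm}) to recognize the result as a fresh boundary polymer of size $n-m$. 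This matches the whole random density in law, so you get the quenched distributional identity needed to transfer both \eqref{sigmatail} and Proposition \ref{LBprop1}/\eqref{LBprop1bound3}, and your recentering via \eqref{altQ2} and the bookkeeping of the constant in \eqref{assA} and the factor $(n/N)^{2/3}$ are at least as careful as the paper's. What your route buys is directness: it avoids the dual environment and reversal machinery entirely and makes visible that the space-time stationarity of $\uu_m$ is the mechanism behind the reduction; what the paper's route buys is the dual-measure consistency statement ($Z^*_n=Z^*_{n-1}e^{r_n(0)}$ and the marginal identity for $Q^*_n$), which is of the same strength here and fits the reversal framework already set up for the lower-bound arguments. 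The only cosmetic blemish is your description of the \eqref{path2} event as an interval ``centered at distance of order $N^{2/3}$ from the characteristic''; after your recentering it is simply $\{\abs{\sigma_0}\le\delta(n/N)^{2/3}N^{2/3}\}$ under a system satisfying \eqref{assA} with a modified constant, which is exactly the situation covered by \eqref{LBprop1bound3}, so the conclusion stands.
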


\begin{proof}
For the first statement it is enough to prove that 
\be \label{eq:path1} Q_{n,t}(\sigma_k-v>u)\eqd Q_{n-k,t -v }(\sigma_0>u).\ee 
Indeed, from this identity we get
\be
P_{n,t}(|\sigma_{\lfloor\gamma n\rfloor}-\gamma t |>b n^{2/3})=P_{n-\lfloor\gamma n\rfloor, (1-\gamma)t} (|\sigma_0|>b n^{2/3})\le C(\para) b^{-3}
\ee
where the last inequality comes from  applying (\ref{sigmatail}).  This is legitimate
because 
\[\abs{(1-\gamma)t-(n-\lfloor\gamma n\rfloor)\trigamf(\para)}\le (A+1)n^{2/3}.\]  
Condition $n\ge (1-\gamma)^{-1}$ ensures that $n-\lfloor\gamma n\rfloor\ge 1$.

By Lemma \ref{LEMrev}, to prove (\ref{eq:path1})   it is enough to show that the  distribution of $(\sigma_1^*,\dots, \sigma_{n-1}^*)$ is the same under $Q^*_n$ and $Q^*_{n-1}$.  For this   
 we check that integrating out $\sigma_{n}^*$ from the density function of $Q^*_n$ results 
in the density of $Q^*_{n-1}$.  In the next calculation use \eqref{rXY} and \eqref{defYk}. 
\begin{align*}
&\int_{s_{n-1}}^{\infty}\frac1{Z_n^*} \exp\left[ X_1(0,s_1)+X_2(s_1,s_2)+\dotsm+ X_n(s_{n-1},s_{n})+Y_n(0,s_{n})-\para s_{n}\right] ds_{n}\\ 
&  =\frac1{Z_n^*} \exp\left[ X_1(0,s_1) +\dotsm+ X_{n-1} (s_{n-2},s_{n-1})+Y_n(0,s_{n-1})-\para s_{n-1}\right] \\ \nn
& \qquad \times
\int_{s_{n-1}}^\infty  \exp\left[X_{n}(s_{n-1},s_n)+Y_n(s_{n-1}, s_n)-\para (s_n-s_{n-1}) \right]ds_{n}\\
& =
\frac1{Z_n^*} \exp\left[ X_1(0,s_1) +\dotsm+ X_{n-1} (s_{n-2},s_{n-1})+Y_n(0,s_{n-1})-\para s_{n-1}\right] e^{r_n(s_{n-1})}\nn
\\
& =\frac{e^{r_n(0)}}{Z_n^*} \exp\left[ X_1(0,s_1) +\dotsm+ X_{n-1} (s_{n-2},s_{n-1})+Y_{n-1}(0,s_{n-1})-\para s_{n-1}\right] \nn
\end{align*}
which is exactly the density of $Q^*_{n-1}$ and also shows that $Z_n^*=Z_{n-1}^* e^{r_{n}(0)}$.

 To prove   (\ref{path2}), use  (\ref{eq:path1}) to write 
\be\nn 
Q_{n,t}(|\sigma_{\lfloor \gamma n\rfloor} -\gamma t|\le \delta n^{2/3})\eqd Q_{n-\lfloor \gamma n\rfloor,(1-\gamma)t}(|\sigma_{0} |\le \delta n^{2/3})
\ee 
and apply Proposition \ref{LBprop1}.
 \end{proof}

\section{Upper bounds   without boundary conditions}
\label{sec:freeZ}

\begin{theorem}\label{thm:freeZ}  Let $\tau>0$ and pick $\para$ so that
$\trigamf(\para)=\tau$. 
Then for $n\ge n_0(\tau)$ and $b\ge b_0(\tau)$  we have 
\be \label{free0}
\P(\abs{\log Z_{1,n}(0,n\tau)-n(\trigamf(\para)\para- \digamf(\para)) }
\ge b n^{1/3})\le C(\tau) b^{-3/2}.
\ee 
 \end{theorem}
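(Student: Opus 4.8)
The plan is to transfer the bounds already obtained for the model with boundary to the boundary-free partition function. Throughout fix $t=n\tau=n\trigamf(\para)$, so that hypothesis \eqref{assA} holds with $A=0$; write $\mu=\trigamf(\para)\para-\digamf(\para)$, and recall from \eqref{ElogZ} that $\E\log Z_n^\para(t)=-n\digamf(\para)+\para t=n\mu$, while Theorem \ref{UBthm} gives $\Vvv(\log Z_n^\para(t))\le C(\para)n^{2/3}$. The bridge between the two models is the decomposition \eqref{ZB2}: since the summand indexed by $j=1$ collects the paths whose first jump occurs at or before $0$ and whose second jump occurs after $0$, and since $\int_{-\infty}^{0}e^{-B(s_0)+\para s_0-B_1(s_0)}\,ds_0=e^{r_1(0)}$ by definition \eqref{defrk}, one has both $Z_n^\para(t)\ge e^{r_1(0)}Z_{1,n}(0,t)$ and $Z_{n,t}^\para(\sigma_0\le0)=\sum_{j\ge1}Z_j^\para(0)Z_{j,n}(0,t)\ge e^{r_1(0)}Z_{1,n}(0,t)$. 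Finally, by Theorem \ref{burkethm} $r_1(0)\eqd-\log\eta$ with $\eta\sim\textup{Gamma}(\para,1)$, so both tails of $r_1(0)$ decay at least exponentially.

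The upper deviation is immediate. From $Z_n^\para(t)\ge e^{r_1(0)}Z_{1,n}(0,t)$ we get $\log Z_{1,n}(0,t)-n\mu\le(\log Z_n^\para(t)-n\mu)-r_1(0)$; Chebyshev applied to $\log Z_n^\para(t)$ together with the variance bound, plus the exponential lower tail of $r_1(0)$, gives $\P(\log Z_{1,n}(0,t)-n\mu\ge bn^{1/3})\le C(\tau)b^{-2}$, stronger than required.

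For the lower deviation I would split on the quenched mass $Q_{n,t}(\sigma_0\le0)$. On the event $\{Q_{n,t}(\sigma_0\le0)\ge\tfrac12\}$, the relation $Z_{n,t}^\para(\sigma_0\le0)=Z_n^\para(t)Q_{n,t}(\sigma_0\le0)$ and the second displayed inequality give $\log Z_{1,n}(0,t)\ge\log Z_n^\para(t)-\log2-r_1(0)$, which with Chebyshev and the exponential upper tail of $r_1(0)$ is more than enough. On the complement, where $Q_{n,t}(\sigma_0>0)>\tfrac12$, I would bound $Z_{1,n}(0,t)$ below through the $\sigma_0>0$ part of the boundary model, $Z_{n,t}^\para(\sigma_0>0)=\int_0^t e^{-B(s)+\para s}Z_{1,n}(s,t)\,ds$. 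Writing $Z_{1,n}(s,t)=\tilde Z_{0,n-1}(0,t-s)$ in the reversed environment $\tilde\om$ via \eqref{ZZtil}, applying the second (that is, $\sigma_0<0$) inequality of the comparison Lemma \eqref{comp2} together with \eqref{Zincr2} for a tilted parameter $\lambda$ near $\para$, and using that $-B(s)+\tilde Y_{n-1}(t-s,t)=\sqrt2\,B^\dagger(s)$ for a standard Brownian motion $B^\dagger$ independent of the $\tilde\om$ system, one reaches
\[
Z_{n,t}^\para(\sigma_0>0)\ \le\ \frac{Z_{1,n}(0,t)}{\tilde Q^\lambda_{n-1,t}(\sigma_0<0)}\int_0^{t}e^{\sqrt2\,B^\dagger(s)+(\para-\lambda)s}\,ds,
\]
hence
\[
\log Z_{1,n}(0,t)\ \ge\ \log Z_n^\para(t)-\log2+\log\tilde Q^\lambda_{n-1,t}(\sigma_0<0)-\log\!\int_0^{t}e^{\sqrt2\,B^\dagger(s)+(\para-\lambda)s}\,ds.
\]
Taking $\lambda>\para$, the last integral is dominated by the Dufresne-type variable $\int_0^{\infty}e^{\sqrt2\,B^\dagger(s)-(\lambda-\para)s}\,ds$, whose tail decays like a power of the level, so that its exceedance probability above $e^{cbn^{1/3}}$ is exponentially small in $b$; Chebyshev again controls $\log Z_n^\para(t)$. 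This is precisely the structure of Lemma \ref{LBlemma3} and of the bound on term (\ref{LB1}) in Proposition \ref{LBprop1}.

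The main obstacle lies in this second case: it requires a quenched \emph{lower} bound $\tilde Q^\lambda_{n-1,t}(\sigma_0<0)\ge e^{-O(bn^{1/3})}$ outside an event of probability $O(b^{-3/2})$ — morally, that the boundary polymer cannot place super-exponentially small mass on one side of its characteristic direction — and this forces a delicate choice of the tilt: $\lambda-\para$ should be of order exactly $n^{-1/3}$, large enough that the Dufresne integral and the accompanying deviations are controlled, yet small enough that hypothesis \eqref{assA} persists for the $(n-1,t,\lambda)$ system (with a bounded constant) so that Propositions \ref{LBprop1}--\ref{sigmaprop} remain available there. This same balance is where the exponent drops from $3$ to $3/2$: an event $\{\sigma_0\gtrsim cn^{2/3}\}$ has probability only $O(c^{-3})$ by Proposition \ref{sigmaprop}, but it perturbs $\log Z_{1,n}(0,t)$ merely on the scale $c^2n^{1/3}$ through the comparison $\log Q_{n,t}\gtrsim-\sigma_0^2/n$ underlying Lemma \ref{lem_sigtail}, and choosing $b\asymp c^2$ turns $c^{-3}$ into $b^{-3/2}$.
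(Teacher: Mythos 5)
Your reduction via \eqref{ElogZ} and Theorem \ref{UBthm}, and your treatment of the upper deviation through $Z_n^\para(t)\ge e^{r_1(0)}Z_{1,n}(0,t)$, coincide with the paper and are fine. The lower deviation, however, has two genuine gaps. First, your Case 1 argument is an inequality running the wrong way: from $Z_{n,t}^\para(\sigma_0\le 0)\ge \tfrac12 Z_n^\para(t)$ and $Z_{n,t}^\para(\sigma_0\le 0)\ge e^{r_1(0)}Z_{1,n}(0,t)$ you cannot conclude $\log Z_{1,n}(0,t)\ge \log Z_n^\para(t)-\log 2-r_1(0)$; that would require $Z_{n,t}^\para(\sigma_0\le 0)\le e^{r_1(0)}Z_{1,n}(0,t)$, which fails because the terms $Z_j^\para(0)Z_{j,n}(0,t)$ with $j\ge 2$ in \eqref{ZB2} are positive and not controlled by the $j=1$ term. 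The mass on $\{\sigma_0<0\}$ needs its own comparison estimate (this is exactly why Lemma \ref{LBlemma3} is stated for $\sigma_0^\pm$), and it can only be controlled on a window of width $O(\sqrt b\,n^{2/3})$, with the complement disposed of by \eqref{sigmatail1}.

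Second, in Case 2 your choice $\lambda>\para$ creates an unresolvable tension that you flag but do not overcome. With $\lambda>\para$ the tilted system has $E^\lambda(\sigma_0)\approx t-(n-1)\trigamf(\lambda)>0$ of order $rn^{2/3}$, so $\tilde Q^\lambda_{n-1,t}(\sigma_0<0)$ is typically of size $e^{-cr^2n^{1/3}}$, and you would need a quenched \emph{lower} tail bound of the form $\P\bigl(\tilde Q^\lambda_{n-1,t}(\sigma_0<0)\le e^{-cbn^{1/3}}\bigr)\le Cb^{-3/2}$; none of Lemma \ref{lem_sigtail}, Proposition \ref{sigmaprop} or Proposition \ref{LBprop1} provides such a lower bound on quenched masses (they all bound the probability that the quenched measure puts \emph{too much} mass somewhere), so the step is not available from the paper's toolbox and you do not supply it. The paper resolves this by tilting the other way, $\lambda=\para- rn^{-1/3}$ with $r=b/(4a)$, so that $\{\sigma_0<0\}$ is the \emph{typical} event for $\tilde Q^\lambda$ and $\P(\tilde Q^\lambda_{n-1,t}(\sigma_0<0)\le 1/2)\le C(a/b)^3$ follows from \eqref{sigmatail1}; the price is a positive drift $(\para-\lambda)s$ in the exponential, which is only harmless because $\sigma_0$ has first been restricted to the window $|\sigma_0|\le u=\sqrt b\,n^{2/3}$ (then the drift contributes $\le \tfrac b4 n^{1/3}$ and the Gaussian sup-bound gives $e^{-b^{3/2}/16}$). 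That windowing step — writing $Z_{n,t}^\para=Z_{n,t}^\para(|\sigma_0|\le u)/Q_{n,t}^\para(|\sigma_0|\le u)$, applying Lemma \ref{LBlemma3} with $a=\sqrt b$ to the numerator ratio and \eqref{sigmatail1} to $\P(Q_{n,t}^\para(|\sigma_0|\le u)\le 1/2)$, each costing $Cb^{-3/2}$ — is the heart of the paper's proof of \eqref{free0}, and it is exactly what is missing from your argument; your closing heuristic identifies the right scales but does not implement them.
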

\begin{proof}
The choice of $\para$ gives $\bE(\log  Z_ {n,n\tau}^{\para})=n(\trigamf(\para)\para- \digamf(\para))$, so by 
 Theorem \ref{UBthm} we only need to prove the bound 
\be
\P(|\log Z_{1,n}(0,n\tau)-\log Z_ {n,n\tau}^{\para}|\ge b n^{1/3})
\le C b^{-3/2}.
\ee
Abbreviate $t=n\tau$.  
By (\ref{ZB2}) 
\be\label{bound}
Z_{n,t}^\para\ge e^{r_1(0)} Z_{1,n}(0,t).
\ee
This gives 
\bea 
\P\left( \log Z_ {n,t}^{\para} - \log Z_{1,n}(0,t) \le -b n^{1/3} \right)
\le \P\left(e^{r_1(0)}\le e^{-b n^{1/3}}  \right) \le C e^{-b n^{1/3}}, \nn   
\eea 
the last inequality follows from  $e^{r_1(0)}\sim \textup{Gamma}(\para,1)^{-1}$ which has bounded density   near 0. 
 
To get the opposite bound set $u=\sqrt{b} n^{2/3}$  and write
\be\begin{aligned}
&\P\left(\frac{Z_{n,t}^\para}{Z_{1,n}(0,t)}\ge e^{b n^{1/3}}\right)=\P\left(\frac{Z_{n,t}^\para(|\sigma_0|\le u)}{Z_{1,n}(0,t)\, Q_{n,t}^\para(|\sigma_0|\le u)}\ge e^{b n^{1/3}}   \right)\\
& \qquad \qquad \le \P\left( \frac{Z_{n,t}^\para(|\sigma_0|\le u)}{Z_{1,n}(0,t) }\ge \frac12 e^{b n^{1/3}}   \right)+
\P\left(Q_{n,t}^\para(|\sigma_0|\le u)\le 1/2\right)\\
&\qquad \qquad \le  C(\para)b^{-3/2}. 
\end{aligned} \label{free3}\ee
To get the last inequality, apply Lemma \ref{LBlemma3} with $\aaa=\sqrt b$
to the first probability,  
and the upper bound  (\ref{sigmatail1}) to the second probability, 
and take both $n$ and $b$ large
enough. 
\end{proof}

Theorem \ref{zeta2thm}  is a restatement of Theorem \ref{thm:freeZ}. The next theorem proves Theorem \ref{thm:freepath2}.

\begin{theorem}\label{thm:freepath}
Assume \eqref{assA} holds,  
% Suppose   $|t-n\trigamf(\para)|\le A n^{2/3}$ 
 and  $0<\gamma<1$. Then for large enough $n$
 and $b$    we have
\be 
P_{(1,n),(0,t)}\left(|\sigma_{\lfloor n \gamma\rfloor}-\gamma t|>b n^{2/3}   \right)\le C(\para) b^{-3}.
\ee 
\end{theorem}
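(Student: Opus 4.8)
The plan is to transfer the path estimate from the boundary model, where Theorem~\ref{path-thm1} already provides the sharp $b^{-3}$ bound, to the free model, along the same route used for the free energy in Theorem~\ref{thm:freeZ}, but arranged so that the exponent $3$ is not degraded to $3/2$.

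\emph{Reductions.} Reversing the driving Brownian motions $B_1,\dots,B_n$ about the time level $t$ and relabelling states $m\mapsto n+1-m$ preserves the law of the environment and turns the free polymer $Z_{1,n}(0,t)$ into a free polymer of the same kind with jump times $\sigma_j\mapsto t-\sigma_{n-j}$. Hence the upper-tail event $\{\sigma_{\lfloor\gamma n\rfloor}>\gamma t+bn^{2/3}\}$ is, up to a harmless shift of the index by one (consecutive jump times differ by $O(1)$), a lower-tail event for the parameter $1-\gamma$; since $\gamma\in(0,1)$ is arbitrary it suffices to bound $P_{(1,n),(0,t)}(\sigma_k<v)$ with $k=\lfloor\gamma n\rfloor$ and $v:=\gamma t-bn^{2/3}$. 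We may also restrict to $3(A+1)\le b\le c(\para)n^{1/3}$, since for larger $b$ one has $v\le 0$ and the event is empty.

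\emph{Transfer step.} Decomposing each partition function at time $v$ according to the state occupied there, and using that by \eqref{ZB2} the quantity $e^{r_1(0)}Z_{1,m}(0,v)$ is precisely the contribution to $Z_m^\para(v)$ of boundary-polymer paths occupying state $1$ at time $0$, one obtains
\[
 Z_{1,n}(0,t;\,\sigma_k<v)=e^{-r_1(0)}\,Z_n^\para\bigl(t;\,x(0)=1,\ \sigma_k<v\bigr)
 \quad\text{and}\quad
 Z_{1,n}(0,t)=e^{-r_1(0)}\,Z_n^\para\bigl(t;\,x(0)=1\bigr),
\]
so that $Q_{(1,n),(0,t)}(\sigma_k<v)=Q^\para_{n,t}(\sigma_k<v\mid x(0)=1)$; in words, from state $1$ onward the boundary polymer \emph{is} the free polymer. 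One must not simply drop this conditioning (the event $\{x(0)=1\}$ has probability of order $n^{-2/3}$, since $\sigma_0$ is spread over scale $n^{2/3}$ under \eqref{assA}), nor invoke the $b^{-3/2}$ free-energy comparison of Theorem~\ref{thm:freeZ}. Instead, following Lemma~\ref{lem_sigtail}, introduce a small tilt $\lambda=\para+bn^{-1/3}$, dominate $\ind\{\sigma_k<v\}$ combined with the $\para$-weight of the bottom boundary by the corresponding $\lambda$-weight, and use the comparison inequalities \eqref{comp1}--\eqref{comp2} for a suitably reversed and shifted system together with Lemma~\ref{LEMrev} to write $Q_{(1,n),(0,t)}(\sigma_k<v)$ as a bounded multiple of a Gamma-type factor with an exponential moment, times a $\sigma_0$-tail probability for a slightly shifted boundary polymer to which \eqref{sigmatail1} applies, times a ratio $Z_n^\lambda/Z_n^\para$. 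One checks along the way that \eqref{assA} persists through the shifts $t\mapsto t-v$, $n\mapsto n-k$ and the tilt, with $A$ only mildly inflated.

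\emph{Conclusion.} At this stage $Q_{(1,n),(0,t)}(\sigma_k<v)$ is dominated by an expression of exactly the shape of the right-hand side of \eqref{sigtail_a}: an exponentially small term $e^{-sb^2n^{1/3}}$, which is the Gaussian-regime cost of forcing the path $bn^{2/3}$ off the diagonal over a time span of order $n$, plus a failure term treated by Chebyshev applied to $\overline{\log Z_n^\lambda}-\overline{\log Z_n^\para}$, whose variance is $O(n^{2/3})$ by Theorem~\ref{UBthm} and Lemma~\ref{4.1lem}; with $u=bn^{2/3}$ this failure term is of order $n^2/u^3=b^{-3}$. Summing and using $b\ge 3(A+1)$ gives $P_{(1,n),(0,t)}(\sigma_k<v)\le C(\para)b^{-3}$, and the reversal symmetry then yields the theorem.

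The main obstacle is the transfer step: the free polymer on $[0,t]$ must be realized inside the boundary model without paying a factor $n^{2/3}$ (from $Q^\para_{n,t}(x(0)=1)^{-1}$) or a factor $e^{bn^{1/3}}$ (from the crude free-energy comparison). The only route that preserves the rate $b^{-3}$ is to run the tilting-and-Chebyshev argument of Lemmas~\ref{lem_sigtail}--\ref{ubldlemma} directly on the restricted free partition function, with the $n^2/u^3$ term supplying the decisive $b^{-3}$; everything else is bookkeeping of constants through the reversal, the shifts in $(t,n)$ and the parameter tilt so that \eqref{assA}, \eqref{sigmatail1} and (for the analogue of term \eqref{LB2}) Lemma~\ref{LBlemma3} all remain applicable.
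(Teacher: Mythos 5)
Your reduction and your transfer identity are fine as far as they go: indeed $Z_n^\para(t;\,x(0)=1)=e^{r_1(0)}Z_{1,n}(0,t)$, so the free polymer is the boundary polymer conditioned on $\{x(0)=1\}$, and you correctly identify the two pitfalls (crude unconditioning costs $n^{2/3}$, a crude free-energy comparison at threshold $e^{bn^{1/3}}$ costs $b^{-3/2}$). The genuine gap is that your replacement mechanism — ``run the tilting-and-Chebyshev argument of Lemma~\ref{lem_sigtail} directly on the restricted free partition function'' — is never actually constructed, and it cannot work in the form described. The tilt $\para\to\lambda$ acts only on the boundary weight $e^{\para\sigma_0}$, so the domination step of Lemma~\ref{lem_sigtail} produces the decisive exponential gain $(\lambda-\para)u\asymp b^2n^{1/3}$ only for events about $\sigma_0$. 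For the interior event $\{\sigma_k<v\}$ with $k=\lfloor\gamma n\rfloor$ the only pathwise information is $\sigma_0<\sigma_k<v$, and dominating $\ind\{\sigma_k<v\}e^{\para\sigma_0}$ by a $\lambda$-weight yields a factor $e^{(\para-\lambda)v}$ with $v\asymp\gamma t\asymp n$, which destroys rather than produces the gain. In the boundary model interior jump times are reduced to $\sigma_0$ precisely through the stationarity identity \eqref{eq:path1}, proved via the dual measure and the Burke property; that identity has no analogue for the free polymer (equivalently, conditioning on $x(0)=1$ pins the left endpoint and breaks the stationarity), and none of the ingredients you cite (\eqref{comp1}--\eqref{comp2}, Lemma~\ref{LEMrev}, \eqref{sigmatail1}, a ratio $Z_n^\lambda/Z_n^\para$) assembles into a substitute without reintroducing control of $Z^\para_{n,t}/Z_{1,n}(0,t)$, i.e.\ of $Q^\para_{n,t}(x(0)=1)$ — exactly the quantity you set out to avoid estimating.

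Moreover, the premise for rejecting the free-energy comparison is a miscalculation: it does not force a $b^{-3/2}$ rate, because it need not be invoked at threshold $e^{bn^{1/3}}$. The paper's proof bounds the quenched free probability by $e^{-r_1(0)}\,\bigl(Z^\para_{n,t}/Z_{1,n}(0,t)\bigr)\,Q^\para_{n,t}(|\sigma_\ell-\gamma t|>u)$ using \eqref{bound}, and then, for each $h\in(b^{-3},1)$, splits the event $\{Q_{(1,n),(0,t)}(\cdot)>h\}$ three ways with the ratio threshold set at $e^{rn^{1/3}}$, $r=sb^2/(3(1-\gamma))$: the Gamma factor gives $\P(e^{r_1(0)}\le b^{-3})\le Cb^{-3}$ (bounded density near $0$); the ratio event has probability $\le Cr^{-3/2}\asymp b^{-3}$ by \eqref{free3}, since $r\asymp b^2$; and the boundary quenched probability exceeds $e^{-2rn^{1/3}}hb^{-3}$ only with probability $\le Cb^{-3}$, by \eqref{eq:path1} plus Lemma~\ref{lem_sigtail}, because its typical size $e^{-su^2/(n-\ell)}$ beats $e^{-2rn^{1/3}}$. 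Integrating over $h\in(b^{-3},1)$ then gives the annealed bound $Cb^{-3}$. So the $b^{-3}$ rate is preserved precisely by the step you excluded, applied at scale $b^2$ and absorbed by the exponentially small quenched boundary estimate; without either that step or a worked-out substitute for it, your argument does not close. (Your opening reversal reduction, while correct, is also unnecessary, since the boundary-model estimates are two-sided.)
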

\begin{proof}
Let $\ell =\lfloor n \gamma\rfloor, t'=\gamma t$ and $u=b n^{2/3}$. By the definitions and   (\ref{bound})  
\begin{align*}
&Q_{(1,n),(0,t)}\left(| \sigma_{\ell }-t'|>u  \right)=
\frac1{Z_{1,n}(0,t)}   \int\limits_{\abs{s-t'}>u}  {Z_{1,\ell }(0,s) Z_{\ell +1,n}(s,t)} \,ds\\
 &\quad \le \frac{e^{-r_1(0)}}{Z_{1,n}(0,t)}   \int\limits_{\abs{s-t'}>u}  {Z^\para_{\ell ,s}\, Z_{\ell +1,n}(s,t)} \,ds  \; = \; 
 \frac{e^{-r_1(0)}Z_{n,t}^\para}{Z_{1,n}(0,t)} \,Q_{n,t}\left(| \sigma_{\ell }-t'|>u  \right) .
\end{align*} 
%Setting $0<h<b^{-3}<1$ we get
%\bea \nn
%\P_{(1,n),(0,t)}\left(|\sigma_{\lfloor n \gamma\rfloor}-\gamma t|>b n^{2/3}   \right)\le \P\left(Q_{(1,n),(0,t)}\left(| \sigma_{\ell }-t'|>u  \right)>h  \right)+b^{-3}
%\eea 
Consider  $h\in(b^{-3},1)$.  
\bea \nn 
\P\left(Q_{(1,n),(0,t)}\left(| \sigma_{\ell }-t'|>u  \right)>h  \right)&\le& \P(e^{r_1(0)}\le b^{-3})+\P\left[
\frac{Z_{n,t}^\para}{Z_{1,n}(0,t)} \ge e^{r n^{1/3}}
\right]\\
\nn&&+\P\left[  
Q_{n,t}\left(| \sigma_{\ell }-t'|>u  \right) >e^{-r n^{1/3}} h b^{-3}
\right]
\eea 
where we set $r=s b^2/(3(1-\gamma))$ with $s$ from Proposition \ref{sigmaprop}. 
The first term is bounded by $C b^{-3}$ as $e^{r_1(0)}$ has bounded density near zero. The second term is bounded by $C r^{-3/2}\le C b^{-3}$ by (\ref{free3}). Finally, \eqref{eq:path1} and Lemma \ref{lem_sigtail} give, for large enough $n$ and $b$ and uniformly for $h\in(b^{-3},1)$,  
\begin{align*}
&\P\left[   Q_{n,t}\left(| \sigma_{\ell }-t'|>u  \right) >e^{-r n^{1/3}} h b^{-3}
\right]\le \P\left[  
Q_{n,t}\left(| \sigma_{\ell }-t'|>u  \right) >e^{-2r n^{1/3}}
\right]\\
 &\qquad\qquad =\P\left[Q_{n-\ell ,t-t'}(|\sigma_0|>u)>e^{-s u^2/(n-\ell) }\right] 
\le   C b^{-3}.
\end{align*}
Collecting the estimates
\[
\P\left[Q_{(1,n),(0,t)}\left(| \sigma_{\ell }-t'|>u  \right)>h  \right]\le C  b^{-3}
\]
and from this 
\begin{align*}
P_{(1,n),(0,t)}\left(|\sigma_{\lfloor n \gamma\rfloor}-\gamma t|>b n^{2/3}   \right)&\le b^{-3}+\int_{b^{-3}}^1 \P\left[Q_{(1,n),(0,t)}\left(| \sigma_{\ell }-t'|>u  \right)>h  \right] dh\le C b^{-3}.
\end{align*}
This completes the proof of the theorem.
\end{proof}

\bibliography{growthrefs}

\bibliographystyle{abbrv}

\end{document}